\newtheorem{theorem}{Theorem}[section]
\newtheorem{lemma}[theorem]{Lemma}
\newtheorem{corollary}[theorem]{Corollary}
\newtheorem{proposition}[theorem]{Proposition}
\theoremstyle{definition}
\newtheorem{definition}[theorem]{Definition}
\theoremstyle{remark}
\newtheorem{remark}[theorem]{Remark}
\newtheorem {question}[theorem]{Question}
\numberwithin{equation}{section}
\newcommand{\NN}{\mathbb{N}}
\newcommand{\PP}{\mathbb{P}}
\newcommand{\lt}{{<}}
\newcommand{\gt}{{>}}
\newcommand{\dom}{\operatorname{dom}}
\newcommand{\uh}{\!{\upharpoonright}}
\newcommand{\inter}[1]{\llbracket #1 \rrbracket}
\newcommand{\C}{\mathcal{C}}
\newcommand{\T}{\mathcal{T}}
\newcommand{\R}{\mathcal{R}}
\newcommand{\M}{\mathcal{M}}
\newcommand{\Q}{\mathcal{Q}}
\newcommand{\F}{\mathcal{F}}
\renewcommand{\P}{\mathcal{P}}
\newcommand{\X}{\mathcal{X}}
\renewcommand{\L}{\mathscr{L}}
\newcommand{\kP}{\mathfrak{P}}
\newcommand{\RT}{\mathsf{RT}}
\newcommand{\SRT}{\mathsf{SRT}}
\newcommand{\WKL}{\mathsf{WKL}}
\newcommand{\ACA}{\mathsf{ACA}}
\newcommand{\RCA}{\mathsf{RCA}}
\newcommand{\COH}{\mathsf{COH}}
\newcommand{\CC}{\mathsf{CC}}
\newcommand{\KL}{\mathsf{KL}}
\newcommand{\Qsf}{\mathsf{Q}}
\newcommand{\Psf}{\mathsf{P}}
\newcommand{\ie}{i.e.\ }
\newcommand{\ce}{c.e.\ }
\def\qt#1{``#1''}%
\title{Cross-constraint basis theorems\\ and products of partitions}
\author{Julien Cervelle \and William Gaudelier \and Ludovic Levy Patey}
\date{\today}
\begin{document}

\setitemize{itemsep=0pt, topsep=1em}
\setenumerate{itemsep=0pt, topsep=1em}

\maketitle

\begin{abstract}
We both survey and extend a new technique from Lu Liu to prove separation theorems between products of Ramsey-type theorems
over computable reducibility. We use this technique to show that Ramsey's theorem for $n$-tuples and three colors is not computably reducible to finite products of Ramsey's theorem for $n$-tuples and two colors.
\end{abstract}


\section{Introduction}

In this article, we consider mathematical theorems as problems~$\Psf$, formulated in terms of \emph{instances} and \emph{solutions}.
For example, K\"onig's lemma states that every infinite, finitely branching tree admits an infinite path. Here, an instance of $\KL$ is an infinite, finitely branching tree $T \subseteq \NN^{<\NN}$, and a solution to~$T$ is an infinite path~$P \in [T]$.

There are many ways to compare the strength of mathematical problems. The most well-known approach is proof-theoretic, using \emph{reverse mathematics}.
It uses subsystems of second-order arithmetic, with a base theory, $\RCA_0$, capturing \emph{computable mathematics}. Then, if~$\RCA_0 \vdash \Qsf \rightarrow \Psf$, then $\Qsf$ is at least as strong as $\Psf$, in the sense that one can solve~$\Psf$ using multiple applications of $\Qsf$ as a black box, with only computable manipulations. There exist other approaches, more computability-theoretic, such as the \emph{Weihrauch reduction} and \emph{computable reduction}. A problem~$\Psf$ is \emph{computably reducible} to~$\Qsf$ (written $\Psf \leq_c \Qsf$) if for every $\Psf$-instance~$X$, there exists an $X$-computable $\Qsf$-instance~$\widehat{X}$ such that, for every $\Qsf$-solution~$\widehat{Y}$ of~$\widehat{X}$, $X \oplus \widehat{Y}$ computes a $\Psf$-solution to~$X$. Weihrauch reduction is a uniform variant of computable reduction.

The proof-theoretic and computability-theoretic approaches are related, in that when restricting reverse mathematics to \emph{$\omega$-models}, that is, models whose first-order part consists of the standard integers together with the usual operations, then an implication over~$\RCA_0$ is a generalization of computable reduction, in which multiple successive applications are allowed. There are similar links between intuitionistic reverse mathematics and the Weihrauch reduction. From this viewpoint, computable reduction is \qt{ressource-sensitive}, in that only one application of~$\Qsf$ is allowed to solve~$\Psf$. Each approach has its own interest, as it reveals a different aspect of the relation between $\Psf$ and~$\Qsf$.

\subsection{Ramsey's theorem}

In this article, we consider one particular family of problems, based on \emph{Ramsey's theorem}. Given a set~$X \subseteq \NN$, we write $[X]^n$ for the set of all $n$-element subsets of~$X$. Given a coloring $f : [\NN]^n \to k$, a set~$H \subseteq \NN$ is \emph{$f$-homogeneous} if \(f\) is constant on~$[H]^n$. Ramsey's theorem for $n$-tuples and $k$-colors ($\RT^n_k$) states the existence, for every coloring $f : [\NN]^n \to k$, of an infinite $f$-homogeneous set. The statement~$\RT^n_k$ has two parameters.

The computational power of~$\RT^n_k$ depending on~$n$ is well-understood for computable instances. For $n = 1$, there is always a computable solution. For $n \geq 2$, Jockusch~\cite[Theorem 5.1]{jockusch1972ramsey} proved the existence of a computable instance of~$\RT^n_2$ with no $\Sigma^0_n$ solution. He also showed \cite[Theorem 5.5]{jockusch1972ramsey} that every computable instance of~$\RT^n_k$ admits a $\Pi^0_n$ solution. Cholak, Jockusch, and Slaman~\cite[Theorem 12.1]{cholak_jockusch_slaman_2001} proved that for every computable instance of~$\RT^n_k$, every PA degree over~$\emptyset^{(n-2)}$ computes the jump of a solution. Seetapun~\cite[Theorem 2.1]{seetapun1995strength} proved that for every computable instance of~$\RT^n_k$ and for every non~$\Delta^0_{n-1}$ set~$C$, there exists a solution that does not compute~$C$. Liu~\cite[Theorem 1.5]{liu2012rt22} proved that every computable instance of~$\RT^2_k$ admits a solution of non-PA degree. On the other hand, for~$n \geq 3$, Hirschfeldt and Jockusch~\cite[Corollary 2.2]{hirschfeldt2016notions} proved the existence of a computable instance of~$\RT^n_2$ such that every solution is of PA degree over~$\emptyset^{(n-2)}$.

Translated in terms of reverse mathematics, $\RT^1_2$ is provable over~$\RCA_0$, $\RT^2_2$ is strictly in between~$\RCA_0$ and $\ACA_0$, and incomparable with~$\WKL_0$, and $\RT^n_2$ is equivalent to~$\ACA_0$ over~$\RCA_0$ for every~$n \geq 3$. See Simpson~\cite{simpson_2009} for a presentation of~$\RCA_0$, $\WKL_0$ and~$\ACA_0$. The number~$k$ of colors is not relevant when it is standard, using a color-blindness argument. Indeed, given an instance of~$\RT^n_{k^2}$, one can create an instance of~$\RT^n_k$ by grouping the colors into blocks of size~$k$, and, given a solution, define another instance of~$\RT^n_k$ by working on the domain of the first solution. For this reason, the power of~$\RT^n_k$ depending on~$k$ was not studied until recently.

The color blindness argument shows that~$\RCA_0 \vdash \RT^n_k \rightarrow \RT^n_{k^2}$ by two successive applications of~$\RT^n_k$. This kind of argument does not hold for a resource-sensitive notion such as computable reduction. Patey~\cite[Corollary 3.14]{patey2016weakness} proved that for every~$n \geq 2$ and every~$k$, $\RT^n_{k+1} \not \leq_c \RT^n_k$.

\subsection{Products of problems}

There exist various operators on mathematical problems, coming essentially from the study of Weihrauch degrees. Among these, we shall consider two operators:
\begin{itemize}
    \item The \emph{star} of a problem~$\Psf$ is the problem $\Psf^*$ whose instances are finite tuples $(X_0, \dots, X_{n-1})$ of instances  of~$\Psf$ for some~$n \in \NN$ and whose solutions are finite tuples $(Y_0, \dots, Y_{n-1})$ such that $Y_i$ is a $\Psf$-solution to~$X_i$  for every~$i < n$.
    \item The \emph{parallelization} of a problem~$\Psf$ is the problem $\widehat{\Psf}$ whose instances are infinite sequences of~$\Psf$-instances~$X_0, X_1, \dots$ and whose solutions are infinite sequences~$Y_0, Y_1, \dots$ such that for every~$n \in \NN$, $Y_n$ is a $\Psf$-solution to~$X_n$.
\end{itemize}
When considering a reduction~$\Psf \leq_c \Qsf^*$, one is allowed to use an arbitrarily large, but finite number~$n$ of instances of~$\Qsf$ to solve an instance~$X$ of~$\Psf$, where $n$ depends on~$X$. However, the instances of~$\Qsf$ must be simultaneously chosen, in that they are not allowed to depend on each others' solutions. This simultaneity prevents from using the standard color blindness argument, and motivates the following question:

\begin{question}\label[question]{quest:rtnk-prod-red}
Given~$n, k \geq 1$, is $\RT^n_{k+1} \leq_c (\RT^n_k)^*$?
\end{question}

The case $n = 1$ is not interesting since~$\RT^n_k$ is computably true, but was studied in the context of Weihrauch degrees by Dorais et al~\cite{dorais2016uniform}, Hirschfeldt and Jockusch~\cite{hirschfeldt2016notions} and Dzhafarov and al~\cite{dhzafarov2020ramsey}. The first interesting case for computable reduction is then~$n = 2$. Let us first illustrate why the technique used to prove that $\RT^2_{k+1} \not \leq_c \RT^2_k$ fails when considering products. Patey~\cite{patey2016weakness} used an analysis based on preservation of hyperimmunities. A function~$g : \NN \to \NN$ is \emph{hyperimmune} if it is not dominated by any computable function. An infinite set~$A$ is hyperimmune if its principal function is hyperimmune, where the \emph{principal function} $p_A$ of a set~$A\coloneqq\{ x_0 < x_1 < \ldots \}$ is the function $n \mapsto x_n$. Patey proved \cite[Theorem 3.11]{patey2016weakness} that for every~$k \geq 1$, every $(k+1)$-tuple of hyperimmune functions~$g_0, \dots, g_k$, and every computable instance~$X$ of~$\RT^n_k$, there exists a solution~$Y$ such that at least two among the hyperimmune functions remain $Y$-hyperimmune, but that it is not the case for computable instances of~$\RT^n_{k+1}$. This property fails when considering the star operator, as the following lemma shows:

\begin{lemma}[{Cholak et al~\cite{cholak2020someresults}}]\label[lemma]{lem:product-hyperimmunity}
There exist 4 hyperimmune functions $g_0, \dots, g_3$ and two computable colorings $f_0, f_1 : [\NN]^2 \to 2$ such that for every infinite $f_0$-homogeneous set~$H_0$ and every infinite $f_1$-homogeneous set~$H_1$, at most one $g_i$ is $H_0 \oplus H_1$-homogeneous.
\end{lemma}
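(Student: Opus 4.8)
\emph{Strategy.} The whole lemma reduces to a single coding statement about one binary coloring, and that statement is where the only real difficulty lies. Index the four functions as $g_{a,b}$ with $(a,b)\in\{0,1\}^2$, and aim to build $f_0$ so that every infinite $f_0$-homogeneous set of color~$a$ computes a function dominating both $g_{1-a,0}$ and $g_{1-a,1}$ (``color~$a$ of $f_0$ handles the opposite row''), and symmetrically $f_1$ so that every infinite $f_1$-homogeneous set of color~$b$ computes a function dominating both $g_{0,1-b}$ and $g_{1,1-b}$ (``color~$b$ of $f_1$ handles the opposite column''). Granting this, let $H_0$ be an infinite $f_0$-homogeneous set, of color~$a$ say, and $H_1$ an infinite $f_1$-homogeneous set, of color~$b$ say. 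For any $(a',b')\neq(a,b)$ one has either $a'\neq a$, whence $a'=1-a$ and $H_0\leq_T H_0\oplus H_1$ computes a function dominating $g_{a',b'}$, or else $a'=a$ and $b'=1-b$, whence $H_1\leq_T H_0\oplus H_1$ computes such a function. In both cases $g_{a',b'}$ fails to be $(H_0\oplus H_1)$-hyperimmune, so $g_{a,b}$ is the only one of the four that can remain hyperimmune relative to $H_0\oplus H_1$. This reduces the lemma to constructing, by one common construction, the four hyperimmune functions together with two computable colorings having the handling property above --- a single construction, since each $g_{a,b}$ must be coded once into $f_0$ and once into $f_1$.

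\emph{The shape of the coding construction.} For the handling property I would run a stagewise construction: partition $\NN$ into consecutive blocks $B_0,B_1,\dots$ and, inside each block, commit the colors under $f_0$ so that an infinite homogeneous set of color~$a$ is forced, in all but finitely many blocks, to jump over an interval whose length encodes the current approximation to the next value of $g_{1-a,0}$ and over a disjoint interval encoding $g_{1-a,1}$; since such a set meets infinitely many blocks it recovers cofinitely many true values and therefore computes a function dominating both targets. Because the color of a pair $\{x,y\}$ is committed only at stage $\max(x,y)$ and depends only on finite approximation data, $f_0$ (and likewise $f_1$) is computable even though the $g_{a,b}$ are merely $\Delta^0_2$. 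For hyperimmunity I would interleave, for each pair $(e,g_{a,b})$, a requirement that enlarges some not-yet-frozen encoded value of $g_{a,b}$ beyond $\varphi_e$ as soon as $\varphi_e$ converges that far; retaining this freedom throughout makes every $g_{a,b}$ hyperimmune.

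\emph{The main obstacle.} The difficulty --- and the reason this lemma is a genuine instance of the technique surveyed here rather than a routine diagonalization --- is that the coding must succeed for a \emph{two}-coloring and for \emph{both} colors at once. A color~$0$ homogeneous set of $f_0$ may live wherever $f_0$ takes value~$0$, in particular throughout the regions set up to constrain the color~$1$ homogeneous sets, so a naive blockwise construction forces only one color class to be productive, and symmetrically for $f_1$; by the counting of the first paragraph this is not enough. Overcoming it requires the more global argument developed in the sequel: instead of reasoning about one putative homogeneous set, one controls the coloring against all infinite homogeneous sets of $f_0$ and $f_1$ simultaneously (for instance through a forcing whose conditions constrain the colors on long future intervals), while keeping $f_0,f_1$ computable and meeting the hyperimmunity requirements. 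Once this is available for a single binary coloring with an arbitrary pair of hyperimmune targets, instantiating it with the targets dictated by the grid and invoking the reduction above completes the proof.
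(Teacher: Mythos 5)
Your first paragraph correctly identifies the right reduction: index the four hyperimmune functions on a $2\times 2$ grid, build $f_0$ so that a color-$a$ homogeneous set kills the opposite row, $f_1$ so that a color-$b$ homogeneous set kills the opposite column, and count. This is exactly the combinatorial structure the paper exploits.

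The problem is everything after that. Your second and third paragraphs do not actually give a construction: you sketch a blockwise coding scheme, observe correctly that it breaks because one color class of $f_0$ can live entirely inside the region reserved for controlling the other, and then close with \emph{``Overcoming it requires the more global argument developed in the sequel''} --- but there is no sequel. That sentence is the gap; the central difficulty you identified is left unresolved.

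It is also a difficulty you need not face. The paper's proof sidesteps all of the coding and forcing by making $f_0,f_1$ \emph{stable} instances of $\RT^2_2$ rather than one-colorings that directly witness domination. Fix a $\Delta^0_2$ partition $A_0\sqcup A_1\sqcup A_2\sqcup A_3=\NN$ with each $\overline{A}_i$ hyperimmune (a routine $\emptyset'$-construction spreading out the four blocks), and let $g_i$ be the principal function of $\overline{A}_i$. Via Shoenfield's limit lemma, pick computable stable $f_0,f_1:[\NN]^2\to 2$ with $\lim_y f_0(x,y)=1$ iff $x\in A_0\cup A_1$ and $\lim_y f_1(x,y)=1$ iff $x\in A_0\cup A_2$. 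The key point, which replaces your entire coding apparatus, is that stability forces any infinite $f_0$-homogeneous set to lie inside one of the two limit-color classes $A_0\cup A_1$ or $A_2\cup A_3$ (and similarly for $f_1$). Being an infinite subset of $A_i\cup A_j$ means being an infinite subset of $\overline{A}_k$ for each $k\notin\{i,j\}$, so the set's own principal function already dominates $g_k$ --- no intervals, no block lengths, no freezing/unfreezing. Your ``both colors at once'' obstacle dissolves because the two limit-color classes automatically partition the ground set; there is no region that one color class can hide in to dodge the constraint. The hyperimmunity of the $g_i$ is also handled once and for all by the choice of partition, rather than interleaved with a coding construction.

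So: your reduction is right, but your implementation is both incomplete (you defer the crux) and aimed at a much harder target than necessary; you are trying to force homogeneous sets to compute a dominating function, whereas the standard move is to arrange that they \emph{are} one, via a stable coloring tracing a $\Delta^0_2$ partition.
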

\begin{proof}
Indeed, consider a $\Delta^0_2$ 4-partition $A_0 \sqcup \dots \sqcup A_3 = \NN$ such that for every~$i < 4$, $\overline{A}_i$ is hyperimmune. 
For every~$i < 4$, $g_i$ is the principal function of~$\overline{A}_i$.
Let $f_0, f_1$ be computable instances of~$\RT^2_2$ such that for every~$x$, $\lim_y f_0(x, y) = 1$ iff $x \in A_0 \cup A_1$ and $\lim_y f_1(x, y) = 1$ iff $x \in A_0 \cup A_2$. Every infinite $f_0$-homogeneous set~$H_0$ is either included in~$A_0 \cup A_1$ or in $A_2 \cup A_3$, and every infinite $f_1$-homogeneous set~$H_1$ is either included in~$A_0 \cup A_2$ or in~$A_1 \cup A_3$.

Note that if a set~$H \subseteq A_i \cup A_j$ for some~$i < j < 4$, then $p_H$ dominates each $g_k$ for~$k \in \{0,1,2,3\}-\{i,j\}$,
in which case none of those $g_k$ are $H$-hyperimmune. Thus, either $g_0$ and $g_1$ or $g_2$ and $g_3$ are not~$H_0$-hyperimmune, and either~$g_0$ and $g_2$, or $g_1$ and $g_3$ are not~$H_1$-hyperimmune. It follows that at most one of the $g_i$ is $H_0 \oplus H_1$-hyperimmune.
\end{proof}

The instances of~$\RT^2_2$ created in \Cref{lem:product-hyperimmunity} are called \emph{stable}, that is, for every~$x$, $\lim_y f(x, y)$ exists. Let~$\SRT^2_k$ denote the restriction of~$\RT^2_k$ to stable instances.
Liu~\cite{liu2023coding} proved that $\SRT^2_3 \not \leq_c (\SRT^2_2)^*$, answering a question by Cholak et al~\cite{cholak2020someresults}.
His proof involved completely new combinatorics, which will be presented in this article. We will also extend his result to give a complete answer to \Cref{quest:rtnk-prod-red} and prove that $\RT^n_3 \not \leq_c (\RT^n_2)^*$ for every~$n \geq 2$.
Before presenting Liu's approach, note that the reduction holds when considering parallelization, but for a completely different reason. 

\begin{lemma}
For every~$n, k \geq 1$, $\RT^{n+1}_k \leq_c \widehat{\RT^n_2}$.
\end{lemma}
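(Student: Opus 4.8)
The plan is to prove $\RT^{n+1}_k \leq_c \widehat{\RT^n_2}$ by the standard trick of encoding an $(n+1)$-dimensional coloring as a sequence of $n$-dimensional colorings, one for each "last coordinate," and then using a parallel application of $\RT^n_2$ to homogenize all of them simultaneously, followed by a pigeonhole/thinning argument on the resulting data.

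First, given a computable instance $f : [\NN]^{n+1} \to k$ of $\RT^{n+1}_k$, for each $s \in \NN$ define the $n$-dimensional coloring $f_s : [\NN]^n \to k$ by $f_s(\bar{x}) = f(\bar{x}, s)$ whenever $\max \bar{x} < s$ (and, say, $f_s(\bar{x}) = 0$ otherwise, so that $f_s$ is total; only the values with $\max \bar x < s$ will matter). Since $\RT^n_2 \equiv_c \RT^n_k$ up to a fixed color-blindness manipulation that is uniform, I will first replace each $f_s$ by a computable instance of $\RT^n_2$ whose solutions compute, together with $f_s$, an infinite $f_s$-homogeneous set; alternatively one simply works with $\widehat{\RT^n_k}$ and notes $\widehat{\RT^n_k} \leq_c \widehat{\RT^n_2}$ by applying the color-blindness argument componentwise. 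Feeding the sequence $(f_s)_{s \in \NN}$ (suitably recoded) into $\widehat{\RT^n_2}$ produces a sequence $(H_s)_{s \in \NN}$ where each $H_s$ is an infinite set on which $f(\cdot, s)$ is constant, say with value $c_s$, on those $n$-tuples from $H_s$ that lie below $s$.

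Next, from $X = f \oplus (H_s)_s$ I build an infinite $f$-homogeneous set by a greedy recursion reminiscent of the usual proof that $\ACA_0$ proves $\RT^{n+1}_k$ (or of Jockusch's bound). I first pass to an infinite set $G$ on which the "limit colors" stabilize: since each $c_s \in k$, there is an infinite set of stages on which $c_s$ takes a fixed value $c^* < k$; such a set is $X$-computable to pick out greedily. Then I recursively choose $a_0 < a_1 < \cdots$ as follows: having chosen $a_0 < \cdots < a_{m-1}$, pick the next element $a_m$ to be some stage $s \in G$ with $s > a_{m-1}$ and $s$ large enough that $a_0, \dots, a_{m-1}$ all lie in the "below $s$" part, and — crucially — belonging to $H_s$-coherence in the sense that $a_0,\dots,a_{m-1}$ can be extended inside the relevant $H_s$'s; one then uses that $f(\bar{x}, a_m) = c^*$ for the right $\bar x$. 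The set $A = \{a_0, a_1, \dots\}$ is then $f$-homogeneous with color $c^*$, and the whole construction is $X$-computable, giving $\RT^{n+1}_k \leq_c \widehat{\RT^n_2}$.

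The main obstacle is making the recursion in the last paragraph genuinely work: a single sequence $(H_s)_s$ of homogeneous sets, one per last coordinate, does not by itself guarantee a homogeneous set in dimension $n+1$, because the $H_s$ need not be nested or mutually coherent. The standard fix is to instead apply $\widehat{\RT^n_2}$ to a cleverer sequence of colorings — for instance, colorings that at stage $s$ also encode the previously obtained finite data — or to iterate $n$ times, peeling off one dimension per parallel application and using that $\widehat{\RT^n_2}$ is closed under composition in the relevant sense; care is needed because computable reduction allows only one application of $\widehat{\RT^n_2}$, so the $n$-fold iteration must be folded into a single parallel instance by interleaving. I would therefore spend the bulk of the argument setting up, by induction on $n$, a single sequence of $2$-colorings of $n$-tuples whose joint solution $X$-computes an $f$-homogeneous set, with the base case $n = 1$ being the observation that $\widehat{\RT^1_2}$ lets one read off, for each $s$, a homogeneous (indeed: a single-color) subset and then stabilize the color as above.
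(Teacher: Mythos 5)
Your approach is genuinely different from the paper's, which is a two-line argument by citation: Brattka and Rakotoniaina showed $\WKL^{(n)} \leq_c \widehat{\RT^n_2}$, so a single $f$-computable instance of $\widehat{\RT^n_2}$ has every solution of PA degree over $f^{(n)}$; and Cholak, Jockusch and Slaman showed that any PA degree over $f^{(n)}$ computes (the jump of) a solution to the $\RT^{n+1}_k$-instance $f$. The parallelization is used entirely as a device to manufacture a single very powerful oracle, and that oracle is then used as a black box.

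Your proposal, by contrast, tries to assemble an $f$-homogeneous set directly from the solutions $(H_s)_s$ to a sequence of $n$-dimensional colorings, and you correctly flag that this does not work as stated: the $H_s$ need not be nested or mutually coherent. The difficulty is that the fixes you gesture at are blocked by the very feature that makes parallelization weaker than sequential use. The instances $f_s$ fed into $\widehat{\RT^n_2}$ must all be $f$-computable and specified simultaneously, so they cannot \emph{encode the previously obtained finite data} — that data lives in the solutions, which are not available when the instances are built. Similarly, ``iterate $n$ times, peeling off one dimension per parallel application'' describes $n$ sequential applications; folding this into a single call is exactly the nontrivial content of the Brattka--Rakotoniaina theorem (via $\WKL^{(n)}$), and your sketch does not supply a substitute. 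Finally, your greedy recursion needs, at each step, to certify that the current partial choice is infinitely extendible; for general $n$ this is a question of arithmetical complexity roughly $\Pi^0_n$ relative to $f$, and cannot be decided from $f \oplus \bigoplus_s H_s$ alone without some additional jump-boosting. In short, the missing idea is precisely that $\widehat{\RT^n_2}$ should be used to compute a PA degree over $f^{(n)}$ rather than to homogenize a naively chosen family of $n$-dimensional slices; without that, the proposal has a genuine gap.
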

\begin{proof}
By Brattka and Rakotoniaina~\cite[Corollary 3.30]{brattka2017uniform}, $\WKL^{(n)} \leq_c \widehat{\RT^n_2}$, where $\WKL^{(n)}$ is the problem whose instances are $\Delta^0_{n+1}$ approximations of infinite binary trees, and whose solutions are infinite paths through the trees. It follows that for every set~$X$, there is an $X$-computable instance of~$\widehat{\RT^n_2}$ such that every solution is of PA degree over~$X^{(n)}$. By Cholak, Jockusch and Slaman~\cite{cholak_jockusch_slaman_2001}, for every instance~$X$ of~$\RT^{n+1}_k$, every PA degree over~$X^{(n)}$ computes the jump of a solution to~$X$.
\end{proof}

\subsection{Definitions and notation}

We assume familiarity with computability theory (see Soare~\cite{soare2016turing}) and reverse mathematics (see any of Simpson~\cite{simpson_2009}, Hirschfeldt~\cite{hirschfeldt2017slicing} or Dzhafarov and Mummert~\cite{dzhafarov2022reverse}).

We identify an integer~$k \in \NN$ with the set $\{0, \dots, k-1\}$. The set $[X]^n$ of $n$-element subsets of~$X$ is in one-to-one correspondence with the set of increasing ordered $n$-tuples over~$X$.
Thus, we shall write for example $f : [\NN]^n \to k$ for $f : [\NN]^n \to \{0, \dots, k-1\}$, and $f(x_0, \dots, x_{n-1})$ for $f(\{x_0, \dots, x_{n-1}\})$, assuming $x_0 < \dots < x_{n-1}$.

Given~$k \in \NN$, let $k^\NN$ be the set of all infinite $k$-valued sequences, and $k^{<\NN}$ be the set of all finite $k$-valued strings.
We use Greek letters $\sigma, \tau, \mu, \rho, \dots$ to denote finitary strings, capital Latin letters $X, Y, Z$ to denote finite or infinite sets of integers, or infinite $k$-valued sequences. Given $k, n \in \NN$, $k^{=n}$ denotes the set of $k$-valued strings of length exactly~$n$. We define $k^{<n}$ and $k^{\leq n}$ accordingly.

Given two strings~$\sigma, \tau$, we let $|\sigma|$ denote the length of~$\sigma$, and $\sigma \preccurlyeq \tau$ means that $\sigma$ is a (non-strict) prefix of~$\tau$. We also write $\sigma \prec X$ to mean that $\sigma$ is an initial segment of~$X$. We let $[\sigma] \coloneqq \{ X \in k^\NN : \sigma \prec X \}$. The choice of~$k$ will be clear from the context. The letter~$\varepsilon$ denotes the empty string.

In the whole document, we fix \(r\in\NN\), and use the following notations. For $N \in \NN$ and $u \in \{\NN, {<}\NN, {\leqslant}N, {=}N \}$, we let $\X_u(0) \coloneqq 3^u$, $\X_u(1) \coloneqq (2^u)^r$ and $\X_u \coloneqq \X_u(0) \times \X_u(1)$. For simplicity, when $u$ is omitted it means $u \coloneqq \NN$, \ie $\X\coloneqq3^\NN\times(2^\NN)^r$.

\subsection{Organization of the paper}

We start by giving a general overview of the construction in~\Cref{sect:core-ideas}, by considering separations of theorems in general, then specializing to Ramsey-type theorems, and then to the actual separation of this article.
Then, in \Cref{sect:general-framework}, we introduce the main concepts, and prove a parameterized version of the theorems of Liu's article, justifying the study of cross-constraint basis theorems. In \Cref{sec:basis}, we prove several cross-constraint basis theorems. More precisely, we reprove the $\Delta^0_2$ basis theorem as a warm-up, give a new proof of the cone avoidance basis theorem, and prove a preservation of non-$\Sigma^0_1$ definitions and a low basis theorem. In \Cref{sect:gamma-hyperimmunity}, study the notion of $\Gamma$-hyperimmunity introduced by Liu, prove that $\COH$ preserves this notion, and deduce from it our main theorem, that is, $\RT^n_3 \not\leq_c (\RT^n_2)^*$ for~$n \geq 2$.

\section{Core ideas}\label[section]{sect:core-ideas}

In this section, we give the general picture of a proof for separating a theorem from another over computable reducibility.
Then, we specialize the idea to the particular question of $\RT^2_{k+1} \leq_c (\RT^2_k)^*$ and explain the core ideas of Liu's technique. Note that the terminology has been freely altered from Liu's original article.

\subsection{Separating theorems}

Given two problems $\Psf$ and $\Qsf$, in order to prove that $\Psf \not \leq_c \Qsf$, one needs to construct an instance $X_{\Psf}$ of~$\Psf$ whose solutions are difficult to compute, and, for every $X_{\Psf}$-computable instance~$X_{\Qsf}$ of~$\Qsf$, a solution~$Y_{\Qsf}$ to~$X_{\Qsf}$ that does not $X_{\Psf}$-compute any solution to~$X_{\Psf}$. The framework was used in its whole generality by Lerman, Solomon, and Towsner~\cite{lerman2013separating}, but all the currently known separations over reverse mathematics or over computable reduction can be done by constructing a computable instance of~$\Psf$.

The construction of the instance of~$\Psf$ can be done either by a priority construction or by an effectivization of a forcing construction. In many cases, it is constructed using the finite extension method, that is, an effectivization of Cohen forcing. This will be the case in our article (see \Cref{exists-gamma-hyp-coloring}).

Given a computable instance~$X_{\Qsf}$ of~$\Qsf$, the solution $Y_{\Qsf}$ is usually built by forcing, in a forcing notion $(\PP_{X_{\Qsf}}, \leq)$.
The solution has to satisfy two types of properties:
\begin{itemize}
    \item \emph{Structural properties}: being a solution to~$X_{\Qsf}$. These properties are generally ensured by the very definition of the notion of forcing.
    \item \emph{Computational properties}: not computing a solution to~$X_{\Psf}$. These properties are divided into countably many requirements, by considering each Turing functional individually. Given a requirement~$\R_e$, one must prove that the set of conditions forcing~$\R_e$ is dense.
\end{itemize}
There is often a tension between the structural properties which provide some computational strength, and the computational properties which require some weakness. There is however some degree of freedom in the computational properties, as they are parameterized by the instance~$X_{\Psf}$ of~$\Psf$ on which we have the hand.

The idea, coming from Lerman, Solomon and Towsner~\cite{lerman2013separating}, consists in building the instance $X_{\Psf}$ considering each tuple $(X_{\Qsf}, c, \R_e)$ at a time, where $X_{\Qsf}$ is a computable instance of~$\Qsf$, $c$ is a forcing condition in $(\PP_{X_{\Qsf}}, \leq)$, and $\R_e$ is a requirement. Given a partial approximation of~$X_{\Psf}$ and a tuple $(X_{\Qsf}, c, \R_e)$, ask whether there is an extension $d \leq c$ forcing $\Phi_e^{Y_{\Qsf}}$ to output enough bits of information. If so, complete $X_{\Psf}$ so that it diagonalizes against the functional. Otherwise, there is an extension $d \leq c$ forcing $\Phi_e^{Y_{\Qsf}}$ to be partial. The counter-intuitive part of this approach is that the satisfaction of the requirements is ensured by the construction of~$X_{\Psf}$ instead of the construction of the solutions~$Y_{\Qsf}$ to computable instances of~$\Qsf$.

As explained by Patey~\cite{patey2017iterative}, one can polish the previous construction, by abstracting the construction steps of $X_{\Qsf}$ to consider every operation with the same definitional properties, yielding some kind of genericity property. For example, the separation of the Erd\H{o}s-Moser theorem and the Ascending Descending Sequence from Ramsey's theorem for pairs~\cite{lerman2013separating} were polished in~\cite{patey2017iterative} and \cite{patey2018partial} to obtain hyperimmunity and dependent hyperimmunity, respectively. In this article, the polishing step yields $\Gamma$-hyperimmunity (see \Cref{def:gamma-hyperimmunity}).

\subsection{Separating Ramsey-type theorems}

In the particular case of Ramsey-type theorems, there exists a well-established sub-scheme of construction. Many Ramsey-type theorems are of the form \qt{For every $k$-coloring of the $n$-tuples of an infinite structure, there exists an infinite isomorphic substructure over which all the $n$-tuples satisfy some properties}. In the case of Ramsey's theorem, the infinite structure is $(\NN, <)$, and the property is homogeneity, but one can consider weaker properties, such as transitivity, in which case one obtains the Erd\H{o}s-Moser theorem. One can also consider tree structures, yielding the tree theorem~\cite{chubb2009reverse} or Milliken's tree theorem~\cite{angles2024milliken}. These theorems are usually proven by induction, by constructing so-called pre-homogeneous substructures. In the case of Ramsey's theorem, an infinite set~$X$ is \emph{pre-homogeneous} for a coloring $f : [\NN]^{n+1} \to k$ if for every $\vec{x} \in [X]^n$ and every larger~$y_0, y_1 \in X$, $f(\vec{x}, y_0) = f(\vec{x}, y_1)$. 

Although pre-homogeneity is the natural notion to consider from a combinatorial viewpoint, the computability-theoretic practice has shown the interest of a weaker notion of \qt{delayed pre-homogeneity} called cohesiveness. Each Ramsey-type theorem has its own notion of cohesiveness. In the case of Ramsey's theorem, this yields the following definition.

\begin{definition}
An infinite set~$C$ is \emph{cohesive} for an infinite sequence of sets~$R_0, R_1, \dots$ if for every~$n$, $C \subseteq^{*} R_n$ or $C \subseteq^{*} \overline{R}_n$, where~$\subseteq^{*}$ means \qt{included up to finite changes}. $\COH$ is the statement \qt{Every infinite sequence of sets admits an infinite cohesive set.}
\end{definition}

Given a coloring~$f : [\NN]^{n+1} \to k$, one can consider the sequence of sets $\vec{R}\coloneqq\langle R_{\vec{x}, z} : \vec{x} \in [\NN]^n, z < k \rangle$ defined by $R_{\vec{x}, z}\coloneqq \{ z \in \NN : f(\vec{x},z) = y \}$. Given an infinite $\vec{R}$-cohesive set~$C$, the coloring $f$ restricted to $[C]^{n+1}$ is \emph{stable}, that is, for every $\vec{x} \in [\NN]^n$, $\lim_{y \in C} f(\vec{x}, y)$ exists. This induces a $\Delta^0_2(C)$ coloring $\widehat{f} : [C]^n \to k$. Cohesiveness is therefore a bridge between computable instances of~$\RT^{n+1}_k$ and $\Delta^0_2$ instances of~$\RT^n_k$.

Cohesiveness has almost no computational power. Indeed, by delaying pre-homogeneity, the statement becomes about the jump of sets. More precisely, $\COH$ is computably equivalent to the statement \qt{For every $\Delta^0_2$ infinite binary tree, there exists a $\Delta^0_2$ path} (see Belanger~\cite{Blanger2022ConservationTF}). Most of the properties used to separate Ramsey-type statements are preserved by~$\COH$. This phenomenon can be explained by the fact that every set can be made $\Delta^0_2$ without affecting too much the ground model (see Towsner~\cite{towsner2015maximum}). This will be again the case in our article (\Cref{thm:coh-preserves-gamma}). Because of this, the question of the separation $\RT^2_{k+1} \not\leq_c (\RT^2_k)^*$ becomes a question about separating $\Delta^0_2$ instances of~$\RT^1_{k+1}$ from finite products of $\Delta^0_2$ instances of~$\RT^1_k$.

More generally, given two Ramsey-type statements $\Psf^n$, $\Qsf^n$ parameterized by the dimension of the $n$-tuples, the question of $\Psf^{n+1} \leq_c \Qsf^{n+1}$ is often reduced to the corresponding question with $\Delta^0_2$ instances of $\Psf^n$ and $\Qsf^n$. The experience shows that almost all the known separations consist in actually constructing a $\Delta^0_2$ instance of~$\Psf^n$ which defeats not only all the $\Delta^0_2$ instances of~$\Qsf^n$, but \emph{all} the instances of $\Qsf^n$, with no effectiveness restriction (see \cite{patey2016weakness,patey2016strength} for examples). The previous remark about Towsner's work shows that this apparently stronger diagonalization is often without loss of generality. This will be again the case in our article, and the $\Delta^0_2$ instance of~$\RT^1_{k+1}$ will defeat all the finite products of instances of~$\RT^1_k$ (\Cref{param-1}).

Building a single instance of~$\RT^1_k$ which defeats simultaneously uncountably many instances of~$(\RT^1_{k+1})^*$ raises new difficulties, as the sequence of all tuples $(X_{\Qsf}, c, \R_e)$ is not countable anymore. Thankfully, we shall see that there exists a single countable notion of forcing $(\PP, \leq)$ such that $\PP_X \subseteq \PP$ for every $\Qsf$-instance~$X$. Moreover, given a condition $c \in \PP$, the class $I(c)$ of all $\Qsf$-instances~$X$ such that $c \in \PP_X$ is a compact class. One will exploit this compactness to defeat all $\Qsf$-instances~$X \in I(c)$ simultaneously.

\subsection{Cross-constraint techniques}

The setting is therefore the following: in order to prove that $\Psf^{n+1} \not \leq_c \Qsf^{n+1}$, one builds a $\Delta^0_2$ instance $X$ of~$\Psf^n$ such that for every instance $\widetilde{X}$ of~$\Qsf^n$, there is a $\Qsf^n$-solution~$\widetilde{Y}$ to~$\widetilde{X}$ that does not compute any $\Psf^n$-solution to~$X$. 

The instance~$X$ of~$\Qsf^n$ is built by an effectivization of Cohen forcing. For example, in this article, to prove that $\RT^2_{k+1} \not \leq_c (\RT^2_k)^*$, we will build a $\Delta^0_2$ instance $f$ of~$\RT^1_{k+1}$ using an increasing sequence of $(k+1)$-valued strings $\sigma_0 \prec \sigma_1 \prec \dots$ and let $f$ be the limit of this sequence.

Let $(\PP, \leq)$ be a countable notion of forcing used to build solutions to every instance of~$\Qsf^n$. At stage~$s$, assuming the Cohen condition $\sigma_s$ has been defined, consider the next pair~$(c, \R_e)$ where $c \in \PP$ and $\R_e$ is a requirement saying that $\Phi_e^G$ is not a solution to the $\Psf^n$-instance. Consider the class~$\C \subseteq \dom \Psf^n \times I(c)$ of all pairs $(X, \widetilde{X})$ such that there is an extension~$d \leq c$ with $\widetilde{X} \in I(c)$ forcing $\Phi_e^G$ to be partial or a $\Psf^n$-solution to $X$. There are two cases:
\begin{itemize}
    \item Case 1: the class~$\C$ is \emph{left-full} below~$\sigma_s$, that is, for every instance~$X$ of~$\Psf^n$ extending~$\sigma_s$, there exists a $\Qsf^n$-instance~$\widetilde{X}$ such that $(X, \widetilde{X}) \in \C$. Then, by some appropriate basis theorem which depends on the combinatorics of~$\Psf^n$ and $\Qsf^n$, there exist multiple pairs $(X_0, \widetilde{X}_0), \dots, (X_{k-1}, \widetilde{X}_{k-1})$ in~$\C$ such that $X_0, \dots, X_{k-1}$ are \emph{incompatible}, in the sense that there is no set which is a solution to all these $\Psf^n$-instances simultaneously, while $\widetilde{X}_0, \dots, \widetilde{X}_{k-1}$ are compatible as $\Qsf^n$-instances. Then, by building a solution to the $\Qsf^n$-instance which will be simultaneously a solution to $\widetilde{X}_0, \dots, \widetilde{X}_{k-1}$, this forces $\Phi_e^G$ to be partial, hence to satisfy~$\R_e$.
    \item Case 2: the class~$\C$ is not left-full below~$\sigma_s$. Then, there exists a $\Psf^n$-instance~$X$ extending~$\sigma_s$ such that, for every~$\Qsf^n$-instance~$\widetilde{X} \in I(c)$, $c$ forces $\Phi_e^G$ not to be a $\Psf^n$-solution to~$X$. By compactness of~$I(c)$, an initial segment $\sigma_{s+1} \prec X$ is sufficient to witness this diagonalization, hence to satisfy~$\R_e$.
\end{itemize}

The general idea of cross-constraint techniques takes its roots in Liu's proof of separation of Ramsey's theorem for pairs from weak K\"onig's lemma~\cite{liu2012rt22}, in a slightly different setting. Indeed, $\Psf^{n+1}$ was $\WKL$, which is known to admit a maximally difficult instance, so only~$\Qsf^n$ was built. In that article, he considered the class $\C$ of all pairs $(f, \widetilde{X})$ such that $f$ is a partial function with finite support, and $\widetilde{X} \in I(c)$ is an instance of~$\RT^1_2$.

\section{General framework}\label[section]{sect:general-framework}

In this section, we define the fundamental notions of left-full cross-tree, and prove the main theorems parameterized by the cross-constraint ideals. The basis theorems proven in \Cref{sec:basis} will show the existence of cross-constraint ideals with various computability-theoretic properties, and will be used to answer the main question in \Cref{sect:reducibility}.

\subsection{Cross-trees}

When considering \(\Pi^0_1\)-classes for the space \(\X\), it is natural to consider cross-trees which play a role analogous to binary trees in the case of the Cantor space.

\begin{definition}
    We extend the \emph{prefix relation} on strings \(\preccurlyeq\) to tuples of strings, in the natural way. More precisely, for any \(n\in\NN\), given integers \(k_0, \ldots, k_{n-1}\), and two tuples \(\sigma\coloneqq(\sigma_0, \ldots, \sigma_{n-1}), \tau\coloneqq(\tau_0, \ldots, \tau_{n-1})\in\prod_{i<n} k_i^{<\NN}\), we have 
    \(\sigma\preccurlyeq\tau\) if and only if \(\forall i<n, \sigma_i\preccurlyeq\tau_i\). For any \(k\in\NN\), the \emph{empty string} of \(k^{<\NN}\) is denoted by \(\varepsilon\), and we abuse the notation to also denote any tuple \((\varepsilon, \ldots, \varepsilon)\).

    Moreover, we define the set of its \emph{infinite extensions} by \([\sigma]\coloneqq[\sigma_0]\times\ldots\times[\sigma_{n-1}]\), and its \emph{length} by \(|\sigma|\coloneqq\max\{|\sigma_i| : i<n\}\).
\end{definition}

\begin{definition}
    A class \(\P\subseteq\X\) is \(\Pi^0_1\) if there is a \ce set \(W\subseteq\X_{<\NN}\) such that
    \[\overline{\P}=\bigcup_{\chi\in W} [\chi]\]
\end{definition}

\begin{definition}
    A \emph{cross-tree} is a set \(T\subseteq\X_{<\NN}\) which is downward-closed for the prefix relation \(\preccurlyeq\), and such that \(\forall (\rho, \sigma)\in T, \forall i\lt j\lt r, |\sigma_i|=|\sigma_j|\text{ and }|\sigma|\leqslant|\rho|\). The \emph{height} of \(T\) is \(h(T)\coloneqq\max\{|\chi|:\chi\in T\}\)

    The class of its (infinite) \emph{paths} is defined as 
    $$[T]\coloneqq\big\{(X, Y)\in\X: \forall n, (X\uh_n, Y\uh_n)\in T\big\}$$
    Moreover, given a string \(\rho\in\X_{<\NN}(0)\), we define the tree \(T[\rho]\coloneqq\{\sigma\in\X_{<\NN}(1): (\rho,\sigma)\in T\}\), which is finite since \(|\sigma|\leqslant|\rho|\). A cross-tree \(T\) is said to be \emph{right-pruned} if \(\forall\rho\in\X_{<\NN}(0), T[\rho]\) is pruned, \ie all the leaves of $T[\rho]$ have length $|\rho|$. Finally, for any \(N\in\NN\) we define \(T\uh_N\coloneqq\{\chi\in T : |\chi|\leqslant N\}\).
\end{definition}

\begin{lemma}
    A class \(\P\subseteq\X\) is \(\Pi^0_1\) if and only if there is a computable cross-tree \(T\subseteq\X_{<\NN}\) such that \([T]=\P\).
\end{lemma}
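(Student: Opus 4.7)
The plan is to establish both directions in standard fashion, mirroring the correspondence between $\Pi^0_1$ classes and computable trees in Cantor space.

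For the direction from right to left, given a computable cross-tree $T$ with $[T] = \P$, I would set
\[W \coloneqq \{\chi \in \X_{<\NN} : \chi \text{ satisfies the cross-tree length constraints and } \chi \notin T\}.\]
Since $T$ and the length constraints are decidable, $W$ is computable, hence c.e. Then I would verify $\overline{\P} = \bigcup_{\chi \in W}[\chi]$. If $(X, Y) \notin [T]$, some $(X\uh_n, Y\uh_n) \notin T$; this tuple has all components of length $n$, so it is cross-tree-valid and therefore lies in $W$. Conversely, if $(X, Y) \in [\chi]$ with $\chi \in W$, then $\chi \preccurlyeq (X\uh_n, Y\uh_n)$ for $n = |\chi|$, and downward closure of $T$ forces $(X\uh_n, Y\uh_n) \notin T$, whence $(X, Y) \notin [T]$.

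For the forward direction, given $\overline{\P} = \bigcup_{\chi \in W}[\chi]$ with $W$ c.e., I would first reduce to the case where every string enumerated into $W$ is cross-tree-valid: each basic open set $[\rho, \sigma_0, \dots, \sigma_{r-1}]$ is a finite union of basic open sets indexed by cross-tree-valid tuples, obtained by padding each $\sigma_i$ up to $m \coloneqq \max_j |\sigma_j|$ and then padding $\rho$ up to length $\max(|\rho|, m)$. Next, I would define
\[T \coloneqq \{\chi \in \X_{<\NN} : \chi \text{ is cross-tree-valid and no prefix of } \chi \text{ lies in } W_{|\chi|}\},\]
where $W_s$ denotes the enumeration of $W$ after $s$ steps. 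Membership in $T$ requires only finitely many comparisons against the finite set $W_{|\chi|}$, so $T$ is computable. Downward closure follows because any cross-tree-valid $\chi' \preccurlyeq \chi$ satisfies $|\chi'| \leq |\chi|$, so every prefix of $\chi'$ is a prefix of $\chi$ and lies outside $W_{|\chi|} \supseteq W_{|\chi'|}$.

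The remaining step is the equality $[T] = \P$. If $(X, Y) \in [T]$ but $(X, Y) \in [\chi]$ for some $\chi \in W$ enumerated by stage $s$, then for any $n \geq \max(s, |\chi|)$ one gets $\chi \preccurlyeq (X\uh_n, Y\uh_n)$ and $\chi \in W_n$, contradicting $(X\uh_n, Y\uh_n) \in T$. Conversely, if some $(X\uh_n, Y\uh_n) \notin T$, then a prefix $\chi$ of it lies in $W$, so $(X, Y) \in [\chi] \subseteq \overline{\P}$. I do not expect a serious obstacle here; the only delicate points are the preliminary padding to ensure $W$ contains only cross-tree-valid strings and the use of the staged enumeration $W_{|\chi|}$, which is what makes $T$ genuinely computable rather than merely co-c.e.
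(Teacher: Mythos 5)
Your proof is correct and follows essentially the same route as the paper: the backward direction takes (a version of) the complement of $T$ as the c.e.\ set $W$, and the forward direction builds $T$ by excluding tuples with a prefix already enumerated into $W$ by stage $|\chi|$, which is exactly the staged-enumeration trick the paper uses to make $T$ computable rather than merely co-c.e. The only differences are bookkeeping: you pre-pad $W$ to cross-tree-valid tuples and restrict the complement to valid tuples, whereas the paper checks all (possibly non-valid) prefixes against $W[|\rho|]$ and imposes the length constraint separately — your extra care with validity is harmless and, if anything, slightly tidier.
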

\begin{proof}
    Let \(T\subseteq\X_{<\NN}\) be a computable cross-tree. The set \([T]\) is \(\Pi^0_1\) as its complement is the set \(\bigcup_{\chi\in W} [\chi]\) where \(W\coloneqq \X_{<\NN}\smallsetminus T\) is computable.

    Now let \(\P\) be a \(\Pi^0_1\) class whose complement is \(\bigcup_{(\rho, \sigma)\in W} [\rho]\times[\sigma]\). Consider the cross-tree \(T\) such that 
    \((\rho, \sigma)\in T\iff \forall\mu\preccurlyeq\rho, \forall\tau\preccurlyeq\sigma, (\mu, \tau)\notin W[|\rho|]\text{ and }|\rho|\geqslant|\sigma|\). It is computable and its paths are exactly the elements of \(\P\).
\end{proof}

\begin{remark}
    In the rest of the document, every notion or proposition related to a class \(\P\subseteq\X\), also holds for a computable cross-tree \(T\subseteq\X_{<\NN}\) instead, by considering its associated class \([T]\).
\end{remark}

\subsection{Left-fullness}

The notion of left-fullness below is a sufficient notion of largeness such that any left-full \(\Pi^0_1\)-class contains multiple members satisfying some constraints. 
A key factor also lies in the fact that the complexity of this notion is only \(\Pi^0_1\).

\begin{definition}
    A class \(\P\subseteq\X\) is \emph{left-full} below \((\rho, \sigma)\in\X_{<\NN}\) if
    \[\forall X\in [\rho], \exists Y\in[\sigma], (X, Y)\in\P\]

    
    Moreover, for any integer \(N\in\NN\), we say that a finite tree $T \subseteq\X_{\leqslant N}$ is \emph{left-full} below $(\rho, \sigma) \in T$ if for every leaf $(\mu, \tau) \in T$, $|\mu| = N$, and for every~$\mu\in\X_N(0)$ extending $\rho$, there is some~$\tau \in\X_N(1)$ extending $\sigma$, such that $(\mu, \tau) \in T$.

    We simply say ``left-full" to signify ``left-full below \((\varepsilon, \varepsilon)\)".
\end{definition}

The above definition for finite trees is motivated by the following lemma. In particular it shows that \(T\) is left-full below \((\rho, \sigma)\) if and only if for any \(N\in\NN\), \(T\uh_N\) is left-full below \((\rho, \sigma)\).

\begin{lemma}\label{left-full-p01}
    Let \((\rho, \sigma)\in\X_{<\NN}\), and \(\P\subseteq\X\) be a \(\Pi^0_1\) class, whose associated computable cross-tree is \(T\).
    The statement
    \begin{equation*}
        \P\text{ is left-full below } (\rho, \sigma) \tag{a}\label{eq:a}
    \end{equation*}
    is equivalent to
    \begin{equation*}
        \forall\mu\succcurlyeq\rho, \exists \tau\succcurlyeq\sigma, |\tau|=|\mu|\text{ and }(\mu, \tau)\in T \tag{b}\label{eq:b}
    \end{equation*}
    Moreover, if \(T\) is right-pruned, then the statement is equivalent to
    \begin{equation*}
        \forall\mu\succcurlyeq\rho, (\mu, \sigma)\in T \tag{c}\label{eq:c}
    \end{equation*}
\end{lemma}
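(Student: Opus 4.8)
The plan is to run a standard König's-lemma argument connecting infinite paths of the computable cross-tree $T$ with members of $\P=[T]$. Throughout I would use that $T$ is downward closed under $\preccurlyeq$ and that every node $(\mu',\tau')\in T$ satisfies $|\tau'_0|=\dots=|\tau'_{r-1}|\leqslant|\mu'|$; I would also assume that $(\rho,\sigma)$ itself respects these length constraints, as it does whenever it is (or could be) a node of a cross-tree, hence in every application — without such a hypothesis (c), and sometimes (b), is vacuously false while (a) need not be, so it is genuinely needed.

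For \emph{(a) $\Rightarrow$ (b)}, I would fix $\mu\succcurlyeq\rho$, pick any $X\in[\mu]\subseteq[\rho]$, use left-fullness to get $Y\in[\sigma]$ with $(X,Y)\in[T]$, and then read off the pair at length $|\mu|$: since $X\uh_{|\mu|}=\mu$ and $|\mu|\geqslant|\rho|\geqslant|\sigma_i|$ for each $i$, the tuple $\tau:=Y\uh_{|\mu|}$ extends $\sigma$, has $|\tau|=|\mu|$, and $(\mu,\tau)\in T$. For the converse \emph{(b) $\Rightarrow$ (a)} — which I expect to be the only step that is not pure bookkeeping — I would fix $X\in[\rho]$ and, for $n\geqslant|\rho|$, set $L_n:=\{\tau\in(2^{=n})^r:\tau\succcurlyeq\sigma\text{ and }(X\uh_n,\tau)\in T\}$. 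Statement (b) applied to $\mu:=X\uh_n$ shows each $L_n$ is nonempty, and downward closure of $T$ shows that truncation sends $L_n$ into $L_{n-1}$; so $\bigcup_{n\geqslant|\rho|}L_n$ ordered by $\preccurlyeq$ is an infinite, finitely branching tree (finitely many roots in $L_{|\rho|}$, at most $2^r$ successors per node). König's lemma then produces a branch $Y\in(2^\NN)^r$ with $Y\uh_n\in L_n$ for all $n\geqslant|\rho|$; reading this at level $|\rho|$ gives $Y\succcurlyeq\sigma$, and $(X\uh_n,Y\uh_n)\in T$ for all $n$ (by construction for $n\geqslant|\rho|$, by downward closure for $n<|\rho|$), so $(X,Y)\in[T]=\P$. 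The subtle point here is to have isolated a \emph{finitely} branching subtree of $T$ (which branches unboundedly in its left coordinate) by freezing $X$ in advance and searching only for a right coordinate, and then to check that the König branch really lands in $[\sigma]$ and that the omitted initial levels $n<|\rho|$ are recovered.

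Finally, for the right-pruned case I would show \emph{(b) $\Rightarrow$ (c)} and \emph{(c) $\Rightarrow$ (b)}. The first direction actually needs no prunedness: given $\mu\succcurlyeq\rho$, (b) supplies $(\mu,\tau)\in T$ with $\tau\succcurlyeq\sigma$, and since $(\mu,\sigma)\preccurlyeq(\mu,\tau)$, downward closure gives $(\mu,\sigma)\in T$. For the second, given $\mu\succcurlyeq\rho$, (c) gives $\sigma\in T[\mu]$; as $T[\mu]$ is a finite tree all of whose leaves have length $|\mu|$, every node shorter than $|\mu|$ has a successor, so $\sigma$ extends inside $T[\mu]$ to some $\tau$ of length $|\mu|$, and $(\mu,\tau)\in T$ witnesses (b). Modulo the length hypothesis flagged above, this closes the cycle (a) $\Leftrightarrow$ (b) and, under right-prunedness, (b) $\Leftrightarrow$ (c).
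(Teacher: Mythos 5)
Your proof is correct and follows essentially the same route as the paper's: (a)\,$\Rightarrow$\,(b) by truncating a witnessing path at length $|\mu|$, (b)\,$\Rightarrow$\,(a) by a K\"onig's-lemma argument on the right coordinate with the left coordinate $X$ frozen in advance, and (b)\,$\Leftrightarrow$\,(c) by downward closure of $T$ and right-prunedness. The one genuine addition is your explicit flagging of the implicit hypothesis that $(\rho,\sigma)$ respects the cross-tree length constraints (all $|\sigma_i|$ equal and $\leqslant|\rho|$): that observation is correct and worth recording, since e.g.\ if $|\sigma|>|\rho|$ then $\mu=\rho$ already falsifies (b) while (a) may hold, and the paper tacitly assumes this throughout without stating it.
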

\begin{proof}
    We first show (\ref{eq:a})\(\implies\)(\ref{eq:b}). Let \(\mu\succcurlyeq\rho\). Consider \(X\in[\mu]\subseteq[\rho]\). By (\ref{eq:a}), there is \(Y\in[\sigma]\) such that \((X, Y)\in\P\). Thus, in particular, we have that \((\mu, Y\uh_{|\mu|})\in T\). Hence \(\tau\coloneqq Y\uh_{|\mu|}\) is the desired witness.
    
    For the converse, let \(X\in [\rho]\). We want to find \(Y\in [\sigma]\) such that \((X, Y)\in\P\). Consider the set \(S\coloneqq\{\tau\succcurlyeq\sigma:(X\uh_{|\tau|}, \tau)\in T\}\). Since \(T\) is a cross-tree, \(S\) is a finitely-branching cross-tree, with root \(\sigma\). Moreover, it is infinite because of (\ref{eq:b}). Thus, by König's lemma, there is \(Y\in 2^\NN\) such that \(\forall \ell, (X\uh_\ell ,Y\uh_\ell)\in T\). Hence \((X, Y)\in\P\).
    
    Last, we show (\ref{eq:c})\(\Leftrightarrow\)(\ref{eq:b}). For all \(\mu\succcurlyeq\rho\), if there is \(\tau\succcurlyeq\sigma\) such that \( |\tau|=|\mu|\text{ and }(\mu, \tau)\in T\), then in particular \((\mu, \sigma)\in T\) since \(T\) is a cross-tree.
    As for the converse, consider \(\mu\succcurlyeq\rho\). We have \((\mu, \sigma)\in T\), and since the cross-tree \(\{\tau: (\mu, \tau)\in T\}\) is pruned, it means there is \(\tau\succcurlyeq\sigma\) in it of size \(|\mu|\) and such that \((\mu, \tau)\in T\).
\end{proof}

The following lemma shows how left-fullness is preserved when extending or shortening the stems. Note that the first and second components do not play a symmetric role.

\begin{lemma}\label{left-full-ext}
    Let \(T\) be a computable cross-tree left-full below \((\rho, \sigma)\in\X_{<\NN}\). Then
    \begin{itemize}
        \item[(a)] \(\forall\widehat{\rho} \succcurlyeq\rho, \forall \widehat{\sigma} \preccurlyeq \sigma, 
        T\text{ is left-full below }(\widehat{\rho}, \widehat{\sigma})\)
        \item[(b)] For every~$n \in \NN$, there are some $\widehat{\rho} \succcurlyeq \rho$ and $\widehat{\sigma} \succcurlyeq \sigma$
        such that $|\widehat{\sigma}| = n$ and $T$ is left-full below $(\widehat{\rho}, \widehat{\sigma})$
    \end{itemize}
\end{lemma}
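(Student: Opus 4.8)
The plan is to prove the two items of \Cref{left-full-ext} separately, each time reducing the statement about left-fullness to the purely combinatorial characterization \eqref{eq:b} of \Cref{left-full-p01}.

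\textbf{Proof of (a).}
Assume \(T\) is left-full below \((\rho, \sigma)\), and fix \(\widehat{\rho}\succcurlyeq\rho\) and \(\widehat{\sigma}\preccurlyeq\sigma\). By \Cref{left-full-p01} it suffices to verify \eqref{eq:b} for \((\widehat{\rho}, \widehat{\sigma})\), namely that for every \(\mu\succcurlyeq\widehat{\rho}\) there is \(\tau\succcurlyeq\widehat{\sigma}\) with \(|\tau|=|\mu|\) and \((\mu,\tau)\in T\). So let \(\mu\succcurlyeq\widehat{\rho}\). Since \(\widehat{\rho}\succcurlyeq\rho\), we also have \(\mu\succcurlyeq\rho\), so by \eqref{eq:b} applied to the original pair \((\rho,\sigma)\) there is \(\tau\succcurlyeq\sigma\) with \(|\tau|=|\mu|\) and \((\mu,\tau)\in T\). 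Finally, since \(\widehat{\sigma}\preccurlyeq\sigma\preccurlyeq\tau\), this same \(\tau\) witnesses \eqref{eq:b} for \((\widehat{\rho},\widehat{\sigma})\). Hence \(T\) is left-full below \((\widehat{\rho},\widehat{\sigma})\). This also makes transparent why the two components behave asymmetrically: extending the first stem only shrinks the universally quantified set of \(\mu\)'s, whereas the second stem is bounded above by \(\tau\), so one may move it down but not (freely) up.

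\textbf{Proof of (b).}
Fix \(n\in\NN\). First reduce to the case \(n\geqslant|\sigma|\): if \(n<|\sigma|\), take \(\widehat{\sigma}\preccurlyeq\sigma\) of length \(n\) and \(\widehat{\rho}\coloneqq\rho\), and apply item (a) to conclude. So assume \(n\geqslant|\sigma|\). The idea is to build \(\widehat{\rho}\succcurlyeq\rho\) long enough (of length at least \(n\)) so that \(T\) is left-full below \((\widehat{\rho},\sigma)\), and then \emph{read off} an extension of \(\sigma\) of length \(n\) from \eqref{eq:b}. Concretely, pick any \(\widehat{\rho}\succcurlyeq\rho\) with \(|\widehat{\rho}|\geqslant n\); by item (a), \(T\) is left-full below \((\widehat{\rho},\sigma)\). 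Applying \eqref{eq:b} to \((\widehat{\rho},\sigma)\) with \(\mu\coloneqq\widehat{\rho}\) yields \(\tau\succcurlyeq\sigma\) with \(|\tau|=|\widehat{\rho}|\geqslant n\) and \((\widehat{\rho},\tau)\in T\). Let \(\widehat{\sigma}\coloneqq\tau\uh_n\), so \(\sigma\preccurlyeq\widehat{\sigma}\preccurlyeq\tau\) and \(|\widehat{\sigma}|=n\). It remains to check \(T\) is left-full below \((\widehat{\rho},\widehat{\sigma})\), i.e.\ \eqref{eq:b} for this pair: given \(\mu\succcurlyeq\widehat{\rho}\), left-fullness of \(T\) below \((\widehat{\rho},\sigma)\) gives \(\tau'\succcurlyeq\sigma\) with \(|\tau'|=|\mu|\) and \((\mu,\tau')\in T\); since \(T\) is a cross-tree and \(|\mu|\geqslant|\widehat{\rho}|=|\tau|\), downward closure forces \(\tau'\uh_{|\tau|}\) to be comparable with \(\tau\), and as both have length \(|\tau|\) they are equal, so \(\widehat{\sigma}=\tau\uh_n\preccurlyeq\tau'\). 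Thus \(\tau'\) witnesses \eqref{eq:b} for \((\widehat{\rho},\widehat{\sigma})\), completing the proof.

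\textbf{Main obstacle.}
The only delicate point is the last step of (b): after extracting \(\widehat{\sigma}\) from one particular \(\tau\), one must ensure that \emph{every} witness \(\tau'\) forced by left-fullness below \((\widehat{\rho},\sigma)\) actually extends \(\widehat{\sigma}\), not merely \(\sigma\). This is where the structural hypothesis on cross-trees is essential — the condition \(|\sigma|\leqslant|\rho|\) together with downward closure under \(\preccurlyeq\) guarantees that for any \(\mu\succcurlyeq\widehat{\rho}\), the node \((\mu,\tau')\in T\) has \(\tau'\uh_{|\widehat{\rho}|}\) forced to agree with the unique extension of \(\sigma\) of that length lying below a node with first coordinate \(\widehat{\rho}\); if \(T\) is additionally right-pruned one can instead invoke \eqref{eq:c} directly to streamline this. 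If uniqueness of the length-\(|\widehat{\rho}|\) extension is not available in general, the fix is to first pass to the right-pruning of \(T\) (which has the same paths, hence the same left-fullness behavior by \Cref{left-full-p01}) and argue via \eqref{eq:c}.
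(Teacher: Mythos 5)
Your proof of item (a) is correct and matches the paper's in spirit (the paper says item (a) "can be proven directly from the definition", which is what you do via \eqref{eq:b}).

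However, your proof of item (b) contains a genuine gap, and the fallback you suggest does not repair it. The error is in the sentence: \emph{"since $T$ is a cross-tree and $|\mu|\geqslant|\widehat{\rho}|=|\tau|$, downward closure forces $\tau'\uh_{|\tau|}$ to be comparable with $\tau$, and as both have length $|\tau|$ they are equal."} Downward closure only tells you that $(\widehat{\rho},\tau'\uh_{|\tau|})\in T$; it does not force $\tau'\uh_{|\tau|}=\tau$, because the slice $T[\widehat{\rho}]=\{\nu : (\widehat{\rho},\nu)\in T\}$ is a tree, not a chain, and can have many nodes of length $|\widehat{\rho}|$. Your ``main obstacle'' paragraph asserts that $|\sigma|\leqslant|\rho|$ plus downward closure gives a \emph{unique} length-$|\widehat{\rho}|$ extension of $\sigma$ below $(\widehat{\rho},\cdot)$; no such uniqueness holds. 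Passing to the right-pruning does not help either: right-pruning guarantees that every node of $T[\widehat{\rho}]$ extends to length $|\widehat{\rho}|$, but it does not collapse $T[\widehat{\rho}]$ to a single branch, so \eqref{eq:c} still does not pin down a unique $\widehat{\sigma}$. Concretely, you fix $\widehat{\sigma}$ by looking at one witness $\tau$ at the single node $\mu=\widehat{\rho}$, but there is no reason that later extensions $\mu\succcurlyeq\widehat{\rho}$ admit a witness $\tau'\succcurlyeq\widehat{\sigma}$ rather than one diverging from $\widehat{\sigma}$ after $|\sigma|$.

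The paper's argument avoids this by exhausting all candidates rather than committing to one. After reducing to $|\sigma|\leqslant n\leqslant|\rho|$, it enumerates all extensions $\sigma_0,\dots,\sigma_{k-1}$ of $\sigma$ of length $n$, and tries them in turn: if $T$ is not left-full below $(\rho_j,\sigma_j)$, \Cref{left-full-p01} gives $\rho_{j+1}\succcurlyeq\rho_j$ such that no $\tau\succcurlyeq\sigma_j$ of length $|\rho_{j+1}|$ has $(\rho_{j+1},\tau)\in T$. If all $k$ candidates fail, then at $\rho_k$ no $\tau\succcurlyeq\sigma$ of length $|\rho_k|$ lies in $T$ (since every such $\tau$ extends some $\sigma_j$), contradicting left-fullness below $(\rho,\sigma)$. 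You should replace your one-shot choice of $\widehat{\sigma}$ by this exhaustion over all length-$n$ extensions of $\sigma$.
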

\begin{proof}
    Item~\((a)\) can be proven directly from the definition of left-full.
    For Item~\((b)\), we can suppose WLOG that $|\rho| \geq n$ and $|\sigma| \leq n$, thanks to the previous item. Consider all the extensions of \(\sigma\) of length \(n\), denoted \(\sigma_0, \ldots, \sigma_{k-1}\) for some \(k\in \NN\), and define \(\rho_0\coloneqq \rho\). If \(T\) is left-full below \((\rho_0, \sigma_0)\) then the assertion is proven. Otherwise, by \Cref{left-full-p01}, it means \(\exists\rho_1\succcurlyeq\rho_0, \forall\tau\succcurlyeq\sigma_0, |\tau|=|\rho_1|\implies (\rho_1, \tau)\notin T\). Now we consider the pair \((\rho_1, \sigma_1)\) to see if \(T\) is left-full below it. In case it is not, use \Cref{left-full-p01} as in the previous case. Proceed inductively like this for every \(\sigma_j\), where \(j<k\). If at some point \(T\) is left-full below \((\rho_j, \sigma_j)\) then the assertion is proven. Otherwise, it means we have built a sequence of string \(\rho_0\preccurlyeq\ldots\preccurlyeq\rho_k\) such that
    \(\forall j<k, \forall \tau\succcurlyeq\sigma_j, |\tau|=|\rho_{j+1}|\implies(\rho_{j+1}, \tau)\notin T\), and since \(T\) is a tree we even have \(\forall j<k, \forall \tau\succcurlyeq\sigma_j, |\tau|=|\rho_k|\implies(\rho_k, \tau)\notin T\). But the latter statement contradicts the fact that \(T\) is left-full below \((\rho, \sigma)\), by considering the string \(\rho_k\) in \Cref{left-full-p01}. 
\end{proof}

\subsection{Parameterized theorems}

Most of the computability-theoretic constructions of solutions to Ramsey-type theorems are done by variants of Mathias forcing, with reservoirs belonging to some Turing ideal containing only weak sets. The combinatorics of the statement usually require some closure properties on this ideal. For example, to construct solutions to computable instances of Ramsey's theorem for pairs, or to arbitrary instances of the pigeonhole principle, one requires the ideal to be a Scott ideal, that is, a model of~$\WKL$ (see~\cite{jockusch2009ramsey,seetapun1995strength}). One must then prove some basis theorem for~$\WKL$ to construct Scott ideals with only weak sets.

In our case, we shall need another closure property, yielding the notion of cross-constraint ideal. The main constructions of this section will be parameterized by cross-constraint ideals, whose existence will be proven in \Cref{sec:basis}.

\begin{definition}
    Let $X$ be an infinite set. A pair of instances \((f, g)\) of \(\RT^1_k\) is \emph{finitely compatible on $X$} if for all color \(i<k\) the set \(X \cap f^{-1}(i)\cap g^{-1}(i)\) is finite. Whenever $X = \NN$, we simply say that \((f, g)\) is finitely compatible. Also, note that the negation of ``finitely compatible" is ``infinitely compatible"
\end{definition}

\begin{definition}
    The \emph{cross-constraint problem}, denoted \(\CC\), is the problem whose instances are left-full cross-trees \(T\subseteq\X_{<\NN}\), and a solution to \(T\) is a pair of paths \((X^i, Y^i)_{i<2}\) such that
    \((X^0, X^1)\) is finitely compatible, and for all \(s<r\), \((Y^0_s, Y^1_s)\) is infinitely compatible.
\end{definition}

The following notion of cross-constraint ideal simply says that $\M$ is an $\omega$-model of~$\CC$, where an $\omega$-model is a structure of second-order arithmetic in which the first-order part consists of the standard integers. An $\omega$-model is then fully characterized by its second-order part, and is therefore often identified to it. The second-order part of an $\omega$-structure is a model of~$\RCA_0$ iff it is a Turing ideal.

\begin{definition}
     A \emph{cross-constraint ideal} is a Turing ideal \(\M\subseteq\kP(\NN)\) such that, any instance \(T \in \M\) of \(\CC\) has a solution \((X^i, Y^i)_{i<2}\) such that \((X^0, Y^0)\oplus (X^1, Y^1)\in\M\).
\end{definition}

Last, we define a notion of hyperimmunity for $k$-colorings~$f$, which is an intermediate notion between Cohen genericity and hyperimmunity. It implies in particular that for every~$j < k$, $\{ x : f(x) \neq j \}$ is hyperimmune, but in a dependent way.

\begin{definition}
    An instance \(f\) of \(\RT^1_k\) is \emph{hyperimmune} relative to \(D\subseteq\NN\) if for every \(D\)-computable array of \(k\)-tuples of mutually disjoint finite sets \(\{\vec{F_n}\}_{n\in\NN}\) such that \(\min\bigcup_{j<k} F_{n, j} > n\), there is \(N\in\NN\) such that \(\forall j<k, F_{N, j}\subseteq f^{-1}(j)\)
\end{definition}

In other words, an instance $f$ is hyperimmune relative to~$D$ if for every $D$-computable sequence $g_0, g_1, \dots$ of partial $k$-valued functions with finite support, such that $\forall n \in \NN, \min \dom g_n > n$, then $f$ is a completion of some~$g_n$.
We are now ready to prove our first parameterized theorem.

\begin{theorem}[{Liu~\cite[Theorem 2.1]{liu2023coding}}]\label[theorem]{param-1}
    Let \(\M\) be a countable cross-constraint ideal and let \(f\in\X(0)\) be hyperimmune relative to any element of \(\M\), then for any \(g\in\X(1)\) there is a solution \(\vec{G}\) of \(g\) which, for any \(Z\in\M\), does not \(Z\)-compute any solution of \(f\).
\end{theorem}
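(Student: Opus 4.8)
The plan is to build the solution $\vec{G}$ of $g$ by forcing with a version of the cross-constraint forcing notion, interleaving a construction of the $\Psf$-instance $f$ (which here is already given as a hyperimmune coloring, so actually $f$ is fixed and we only build $\vec{G}$) so that each requirement $\R_e$, saying that $\Phi_e^{\vec{G}}$ is not an $f$-homogeneous infinite set, is met, and simultaneously ensuring $\vec{G}$ is an honest solution of $g$. Concretely, a condition will be a left-full cross-tree $T \in \M$ together with a finite stem $(\rho, \sigma)$ along which $T$ is left-full, and the generic object will be a path $(X, Y)$ of $T$; the $X$-component is irrelevant to the output solution, but it records the ``shadow'' $\Psf$-instance against which we diagonalize. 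The structural part — that the $Y$-component, read off the $r$ coordinates $Y_0, \dots, Y_{r-1}$ as characteristic functions of homogeneous sets for $g$, genuinely solves $g$ — is guaranteed by the very shape of cross-trees (the condition that $|\sigma_i| = |\sigma_j|$ and that one works inside a left-full $\Pi^0_1$ class of valid pairs), exactly as in the $\WKL$-style constructions the paper refers to.

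The heart is the density argument for a fixed requirement $\R_e$. Given a condition with left-full cross-tree $T$ and stem $(\rho, \sigma)$, form the subclass $\C \subseteq \X(0) \times \X$ of pairs $(W, (X,Y))$ — equivalently, after taking the associated cross-tree, the set of ``good'' extensions — for which some extension $d \leq (T, \rho, \sigma)$ forces $\Phi_e^{\vec{G}}$ to be partial, or to be a set that fails to be $W$-homogeneous. Ask whether this class is left-full below the current approximation $\sigma_s$ of $f$. If it is \emph{not} left-full, then by compactness of the $\Pi^0_1$ class and non-left-fullness there is a finite extension $\sigma_{s+1} \prec f$ of the stem of $f$ witnessing, uniformly, that along every path of $T$ below $(\rho,\sigma)$ the functional $\Phi_e^{\vec{G}}$ outputs (in finitely many steps) a set containing some pair of elements coloured differently by any completion $f$ extending $\sigma_{s+1}$ — this diagonalizes $\R_e$ and we commit $f$ to extend $\sigma_{s+1}$. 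If the class \emph{is} left-full below $\sigma_s$, then — and this is where the cross-constraint basis theorems from \Cref{sec:basis} enter, via the fact that $\M$ is a cross-constraint ideal — we can extract from $\C$ finitely many pairs $(X^0, Y^0), (X^1, Y^1)$ (here just two suffice, matching the two ``sides'' of the $\CC$ problem) whose $X$-components are \emph{finitely compatible} as instances of $\RT^1_{k+1}$ — so no single infinite set can be homogeneous for both — while the $Y$-components are \emph{infinitely compatible} coordinatewise, so they admit a common refinement that is still a valid solution of $g$. Building $\vec{G}$ below that common refinement forces $\Phi_e^{\vec{G}}$, which agrees with the committed behaviour on both branches, to be an infinite set simultaneously homogeneous for two finitely-incompatible colorings — impossible — hence partial, so $\R_e$ is satisfied.

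The main obstacle, and the place where the hyperimmunity hypothesis on $f$ is indispensable, is the passage ``left-full $\Rightarrow$ two finitely-incompatible $X$-components inside $\C$''. Left-fullness only tells us that for every $W \succcurlyeq \sigma_s$ there is \emph{some} witnessing path; to get two with finitely incompatible first coordinates one must run the appropriate cross-constraint basis theorem (the $\Delta^0_2$ / cone-avoidance / low basis variants proved later), and crucially, to turn ``$\Phi_e^{\vec{G}}$ outputs a large homogeneous set'' into an actual array of $(k+1)$-tuples of disjoint finite sets that $f$ must avoid completing, one invokes exactly the definition of $f$ being hyperimmune relative to the $\M$-element coding $T$: the forced finite outputs of $\Phi_e^{\vec{G}}$ along the branches give such an $\M$-computable array, and hyperimmunity of $f$ supplies an index $N$ with $F_{N,j} \subseteq f^{-1}(j)$ for all $j$, which is precisely the contradiction needed in the non-left-full branch. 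Once this dovetailing of (i) the compactness of $I(c)$, (ii) the cross-constraint basis theorem, and (iii) the hyperimmunity of $f$ is set up, the remaining bookkeeping — enumerating pairs $(c, \R_e)$, checking conditions form a countable forcing inside $\M$, verifying the generic path's coordinates solve $g$ — is routine, and the theorem follows by taking $\vec{G}$ to be any sufficiently generic path and noting that for each $Z \in \M$ every $\R_e$ with oracle $Z$ has been handled.
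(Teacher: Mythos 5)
Your proposal has the right overall intuition---a two-case argument, with the cross-constraint ideal used in the ``large'' case and hyperimmunity of $f$ in the ``small'' case---but the forcing framework you describe is not the one that proves \Cref{param-1}, and the substitution is not merely cosmetic. The paper builds $\vec{G}$ by a \emph{Mathias-style} forcing with conditions $\big((\vec{F_\alpha})_{\alpha\in 2^r}, \vec{A}\big)$, keeping in parallel, for every colour-tuple $\alpha \in 2^r$, finite sets $F_{\alpha,s}$ that are $g_s$-homogeneous of colour $\alpha(s)$, with a shared reservoir tuple $\vec A \in \M$. This parallel bookkeeping is essential: one cannot know in advance which colours of homogeneity the final solution will carry, so the requirement that is actually forced is the disjunction $\bigvee_{\alpha \in V_g(c)} \R^Z_{e_\alpha, j_\alpha}$, and a pigeonhole at the end extracts a single $\alpha$ that succeeds for every $Z \in \M$. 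You instead take conditions to be a cross-tree $T$ with a stem $(\rho,\sigma)$, the generic being a \emph{path} of $T$; but that is the forcing used for the basis theorems in \Cref{sec:basis}, whose purpose is to show that cross-constraint ideals \emph{exist}, not to exploit one. A path of a cross-tree is a pair of colourings $(X,Y) \in 3^\NN \times (2^\NN)^r$; your reading of $Y_s$ as ``the characteristic function of a $g_s$-homogeneous set'' does not match the definition of the cross-constraint problem, and the claim that the ``shape of cross-trees'' automatically makes $\vec G$ a solution of $g$ is not something the framework supports. The cross-constraint ideal hypothesis is consumed \emph{inside} the density lemma (\Cref{lem:param-1-forcing-R}, Case 1), applied once per requirement to a left-full $\Pi^0_1(\vec A)$ class $\P$ of coloring-pairs $(\widetilde f, \widetilde g)$, not as the outer notion of forcing.

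There is also a concrete gap in your ``not left-full'' branch. You write that you will ``commit $f$ to extend $\sigma_{s+1}$'', but $f$ is fixed in \Cref{param-1}: there is no stem of $f$ to extend. A failure of left-fullness produces some $W$ extending the current stem with no witnessing $Y$; nothing forces $W$ to be an initial segment of the given $f$, so the proposed diagonalization does not go through. This conflates the present theorem with the outer construction described in Section 2.3, where the instance $f$ \emph{is} built by Cohen forcing (as in \Cref{exists-gamma-hyp-coloring}). What the paper actually does in Case 2 of \Cref{lem:param-1-forcing-R} is extract from the compactness failure an $\vec A$-computable array $(E_{n,0}, E_{n,1}, E_{n,2})_n$ of triples of mutually disjoint finite sets; hyperimmunity of $f$ then supplies some $n$ with $E_{n,j} \subseteq f^{-1}(j)$ for all $j<3$, and since on the relevant window $E_{n,j}$ equals $f^{-1}(j)$, this localizes a concrete Mathias extension $\vec H$ that forces the requirement. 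No contradiction is derived---hyperimmunity guarantees that $f$ \emph{does} complete some triple, and that completion is precisely what tells you how to extend the condition. Your last paragraph inverts this: it describes triples ``that $f$ must avoid completing'' and speaks of a contradiction, which is the opposite of how the hypothesis is used.
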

\begin{proof}
    The set \(\vec{G}\) is constructed by a variant of Mathias forcing, using conditions of the form \(\big((\vec{F_\alpha})_{\alpha\in 2^r}, \vec{A}\big)\) where 
    \begin{itemize}
        \item \(\vec{F_\alpha}\) is an \(r\)-tuple of finite sets \(g\)-homogeneous for the colors \(\alpha\), \ie \(\forall s<r, F_{\alpha, s}\subseteq g_s^{-1}(\alpha(s))\)
        \item \(\vec{A}\) is an \(r\)-tuple of infinite sets in \(\M\) such that \(\forall\alpha\in 2^r, \forall s<r, \min A_s>\max F_{\alpha, s}\)
    \end{itemize}
    The idea is that we do not know in advance what the colors of homogeneity will be for the solution being constructed, so we build all the possibilities in parallel, with \(\alpha\) indicating the colors, \ie for any \(i<r\), the set \(F_{\alpha, i}\) is \(g_i\)-homogeneous for the color \(\alpha(i)\).
    \bigskip
    
    A condition $\big((\vec{E_\alpha})_{\alpha\in 2^r}, \vec{B}\big)$ \emph{extends} another \(\big((\vec{F_\alpha})_{\alpha\in 2^r}, \vec{A}\big)\) if, for every \(\alpha\in 2^r\) and every \(s<r\), we have \(E_{\alpha, s}\supseteq F_{\alpha, s}\), \(B_s\subseteq A_s\), and \(E_{\alpha, s}-F_{\alpha, s}\subseteq A_s\). 
    \bigskip

    Every sufficiently generic filter~$\F$ for this notion of forcing induces a family of sets $(G^\F_{\alpha, s})_{\alpha \in 2^r, s < r}$ defined by
    $$
    G^\F_{\alpha, s} = \bigcup \{F_{\alpha,s} : \big((\vec{F_\alpha})_{\alpha\in 2^r}, \vec{A}\big) \in \F\}  
    $$
    Given an initial segment $F_{\alpha, s}$ of a condition $\big((\vec{F_\alpha})_{\alpha\in 2^r}, \vec{A}\big)$, it is not necessarily possible to find an extension $\big((\vec{E_\alpha})_{\alpha\in 2^r}, \vec{B}\big)$ with $|E_{\alpha,s}| > |F_{\alpha,s}|$,
    since it might be the case that $A_s\cap g^{-1}_s(\alpha(s))$ is empty. Thus, for any sufficiently generic filter~$\F$, the set $G^\F_{\alpha, s}$ might not be infinite. However, there must necessarily exist some $\alpha \in 2^r$ such that every $G^\F_{\alpha, s}$, $s < r$ is infinite.
    Given a condition \(c = \big((\vec{F_\alpha})_{\alpha\in 2^r}, \vec{A}\big)\) and a coloring \(h\in\X(1)\), define the set 
    $$V_h(c)\coloneqq\{\alpha\in 2^r : \forall s<r, A_s\cap h^{-1}_s(\alpha(s))\neq\emptyset\}$$ 
    of ``valid" combinations.
    Note that if $d\leqslant c$, then $V_h(d)\subseteq V_h(c)$. Moreover, $V_g(c) \neq \emptyset$ for every condition~$c$. A ``valid" combination of a condition \(c\) allows us to find an extension, as the following lemma shows.
    \begin{lemma}\label[lemma]{lem:param-1-forcing-inf}
        For all conditions $c\coloneqq \big((\vec{F_\alpha})_{\alpha\in 2^r}, \vec{A}\big)$, all~$\beta \in V_g(c)$ and all~$s < r$, there is an extension $\big((\vec{E_\alpha})_{\alpha\in 2^r}, \vec{B}\big)$ such that $|E_{\beta, s}| > |F_{\beta, s}|$.
    \end{lemma}
    \begin{proof}
        Let \(\beta\in V_g(c)\) and \(s<r\). By definition of \(\beta\) consider \(n\in A_s\cap g^{-1}_s(\alpha(s))\). Define \((\vec{E_\alpha})_{\alpha\in 2^r}\) to be equal to \((\vec{F_\alpha})_{\alpha\in 2^r}\), except for \(E_{\beta, s}\coloneqq F_{\beta, s}\cup\{n\}\). Accordingly, define \(\vec B\) to be equal to \(\vec A\), except for \(B_s\coloneqq A_s-\{0, 1, \dots, n\}\). Then $\big((\vec{E_\alpha})_{\alpha\in 2^r}, \vec{B}\big)$ is a condition which satisfies the lemma. 
    \end{proof}

    Fix an enumeration of Turing functionals \((\Psi_e)_{e\in\NN}\). For any \(e\in\NN\), \(j<3\) and \(Z\in\M\) let
    \[\R_{e, j}^Z\coloneqq \Psi_e^{Z\oplus\vec{G}}\text{ is not an infinite subset of }f^{-1}(j)\]
    
    \begin{lemma}\label[lemma]{lem:param-1-forcing-R}
        For any \(2^r\)-tuple of integers \((e_\alpha)_{\alpha\in 2^r}\), any \(2^r\)-tuple of colors $(j_\alpha)_{\alpha \in 2^r}$, any \(Z\in\M\), and any condition \(c\), there is an extension forcing \(\bigvee_{\alpha\in V_g(c)} \R_{e_\alpha,j_{\alpha}}^Z\).
    \end{lemma}
    \begin{proof}
        Let \(c\coloneqq \big((\vec{F_\alpha})_{\alpha\in 2^r}, \vec{A}\big)\) be a condition, and define \(U(c)\) to be the \(\Pi^0_1(\vec A)\)-class whose elements are colorings \(\widetilde g\) such that 
        $$\forall s<r, \forall j<3, A_s\cap g_s^{-1}(j)=\emptyset\implies A_s\cap \widetilde{g_s}^{-1}(j)=\emptyset$$ in other words \(U(c)\coloneqq\{\widetilde g : V_{\widetilde g}(c)=V_g(c)\}\). Note that it is non-empty since \(g\in U(c)\).
        This will ensure that the \(\widetilde g\) we consider later have the same behavior as \(g\) regarding \(\vec A\).
        
        Moreover, given \(\alpha\in 2^r\), an \(r\)-tuple of finite sets \(\vec G\) \emph{satisfies} \((\vec{F_\alpha}, \vec A)\) if \(\forall s<r, G_s\supseteq F_{\alpha, s}\text{ and }G_s-F_{\alpha, s}\subseteq A_s\)

        For every \(n\in\NN\) consider the class \(\Q_n\) whose elements are colorings \(\widetilde{f}\in\X(0)\) such that
    \begin{align*}
        &\exists \widetilde{g} \in U(c),\\
        &\forall\alpha\in V_g(c), \forall\vec{G}\text{ satisfying }(\vec{F_\alpha}, \vec{A})\\
        &\text{if }\vec{G}\text{ is }\widetilde{g}\text{-homogeneous for the colors }\alpha,\\
        &\text{then }\Psi_{e_\alpha}^{Z\oplus\vec{G}}\cap\rrbracket n, +\infty\llbracket \subseteq\widetilde{f}^{-1}(j_{\alpha})
    \end{align*}
    Note that \(\Q_n\) is a \(\Pi^0_1(\vec{A})\) class uniformly in \(n\). Indeed, the above formula is \(\Pi^0_1(\vec{A})\), because by compactness, the existence of \(\widetilde g\) is equivalent to finding an approximation in \(\X_{m}(1)\) for any length \(m\in\NN\). Moreover, the set $V_g(c)$ depends on~$g$ which might be of arbitrary complexity, but since it is finite, it does not affect the complexity of the formula.
    \bigskip
    
    \textbf{Case 1}, \(\exists n, \Q_n=3^\NN\). Thus fix such an \(n\) and consider the class
    \(\P\coloneqq\{(\widetilde{f}, \widetilde{g})\in\X: \widetilde{g}\text{ is a witness of }\widetilde{f}\in\Q_n\}\). Since $\P$ is a left-full \(\Pi^0_1(\vec{A})\)-class and \(\M\) is a cross-constraint ideal, there are paths \((X^i, Y^i)_{i<2}\in\P^2\) such that
    \begin{itemize}
        \item \((X^0, X^1)\) is finitely compatible
        \item for any \(s<r\), \((Y^0_s, Y^1_s)\) is infinitely compatible on $A_s$
        \item \((X^0, Y^0)\oplus(X^1, Y^1)\in\M\)
    \end{itemize}
    From the second item, define \(\beta\in 2^r\) to be colors such that \(\forall s<r, (Y^0_s)^{-1}(\beta(s))\cap (Y^1_s)^{-1}(\beta(s))\cap A_s\text{ is infinite}\). Note that $\beta \in V_{Y_0}(c) \cap V_{Y_1}(c)$. Since $g, Y^0, Y^1 \in U(c)$, then $V_{Y_0}(c) = V_{Y_1}(c) = V_g(c)$, so $\beta \in V_g(c)$.
    For each \(s<r\), let \(B_s = (Y^0_s)^{-1}(\beta(s))\cap (Y^1_s)^{-1}(\beta(s))\cap A_s\), and define \(\vec{B}\coloneqq (B_0, \ldots, B_{r-1})\). We claim that \(((\vec{F_\alpha})_{\alpha\in 2^r}, \vec{B})\) is the extension we are looking for. 
    Indeed, since \((X^i, Y^i)_{i<2}\in\P^2\), then for every \(\vec{G}\) compatible with \((F_{\beta}, \vec{B})\), we have \(\Psi^{Z\oplus\vec{G}}_{e_\beta}\cap\rrbracket n, +\infty\llbracket\,\subseteq {X_0}^{-1}(j_{\beta})\cap {X_1}^{-1}(j_{\beta})\). Since \({X_0}^{-1}(j_{\beta})\cap {X_1}^{-1}(j_{\beta})\) is finite, the requirement \(\R_{e_\beta,j_\beta}^Z\) is satisfied, so \(\bigvee_{\alpha\in V_g(c)} \R_{e_\alpha,j_\alpha}^Z\) is satisfied as well.
    \bigskip
    
    \textbf{Case 2}, \(\forall n, \Q_n\neq 3^\NN\). 
    
    In which case, for each \(n\in\NN\), there is some \(f_n\notin\Q_n\) and \(\ell_n\in\NN\) such that
    \begin{align*}
        &\forall\widetilde{g}\in U(c),\\
        &\exists\beta\in V_g(c), \exists\vec{H}\text{ satisfying }(\vec{F_\beta}, \vec{A}),\\
        &\vec{H}\text{ is }\widetilde{g}\text{-homogeneous for the colors }\beta\\
        &\text{ and }\Psi_{e_\beta}^{Z\oplus\vec{H}}\cap\rrbracket n, \ell_n\llbracket\nsubseteq f_n^{-1}(j_{\beta})
    \end{align*}
    
    This implies that there is an \(\vec{A}\)-computable array \((E_{n, 0}, E_{n, 1}, E_{n, 2})_{n\in\NN}\) such that
    \begin{align*}
        &\forall\widetilde{g}\in U(c),\\
        &\exists\beta\in V_g(c), \exists\vec{H}\text{ satisfying }(\vec{F_\beta}, \vec{A}),\\
        &\vec{H}\text{ is }\widetilde{g}\text{-homogeneous for the colors }\beta\\
        &\text{ and }\Psi_{e_\beta}^{\vec{A}\oplus\vec{H}}\cap\rrbracket n, \ell_n\llbracket\nsubseteq E_n(j_{\beta})
    \end{align*}
    Indeed, for each \(n\), the set \(\rrbracket n, \ell_n\llbracket\) can be partitioned into \((\{m:f_n(m)=j\})_{j<3}\), so it is sufficient to search for a coloring satisfying the above properties, as the search must end.

    The coloring \(f\) is hyperimmune relative to \(\vec{A}\), because \(\vec{A}\in\M\). Hence there is some \(n\in\NN\) such that \(\forall j<3, E_{n, j}\subseteq f^{-1}(j)\).
    Moreover, we have \(E_{n, j}=f^{-1}(j)\cap\rrbracket n, \ell_n\llbracket\). So, by considering \(g\), we obtain
    \begin{align*}
        &\exists\beta\in V_g(c), \exists\vec{H}\text{ satisfying }(\vec{F_\beta}, \vec{A}),\\
        &\vec{H}\text{ is }g\text{-homogeneous for the colors }\beta\\
        &\text{ and }\Psi_{e_\beta}^{Z\oplus\vec{H}}\cap\rrbracket n, \ell_n\llbracket\nsubseteq f^{-1}(j_{\beta})
    \end{align*}

    Finally, the extension we are looking for is \(\big(\{\vec{F_\alpha}\}_{\alpha\neq\beta\in 2^r}\cup\{\vec{H}\}, \vec{A}-\{0, \ldots, \max\vec{H}\}\big)\). This completes the proof of \Cref{lem:param-1-forcing-R}.
    \end{proof}

    Let $c_0$ be a condition such that $V_g(c_0)$ is minimal for inclusion. Let~$V = V_g(c_0)$.
    Let $\F$ be a sufficiently generic filter containing~$c_0$. In particular, $V = V_g(c)$ for every $c \in F$.
    By \Cref{lem:param-1-forcing-inf}, for every~$\alpha \in V$ and every~$s < r$, $G^\F_{\alpha, s}$ is infinite.
    Moreover, by \Cref{lem:param-1-forcing-R}, for every~$Z \in \M$, for every $2^r$-tuple of integers $(e_{\alpha})_{\alpha \in 2^r}$ and every $2^r$-tuple of colors $(j_{\alpha})_{\alpha \in 2^r}$, $\vec{G}^\F$ satisfies $\bigvee_{\alpha\in V} \R^Z_{e_\alpha,j_\alpha}$.
    By a pairing argument, for every~$Z \in \M$, there is some~$\alpha \in V$ such that $Z \oplus \vec{G}^\F_\alpha$ does not compute any infinite $f$-homogeneous set. Since~$\M = \{Z_0, Z_1, \dots \}$ is countable, by the infinite pigeonhole principle, there exists some~$\alpha \in V$ such that for infinitely many~$n \in \NN$, $Z_0 \oplus \dots \oplus Z_n \oplus \vec{G}^\F_\alpha$ does not compute any infinite $f$-homogeneous set. By downward-closure of this property under the Turing reduction, it holds for every~$n$. This completes the proof of \Cref{param-1}.
\end{proof}

Our first parameterized theorem has applications in terms of strong non-reducibility between non-computable instances of~$\RT^1_{k+1}$ and $(\RT^1_k)^{*}$ (\Cref{thm:rt13-soc-rt12}) and non-reducibility between computable instances of~$\SRT^2_{k+1}$ and $(\SRT^2_k)^{*}$ (\Cref{separation2}). We now prove a second parameterized theorem which enables us to prove separations between computable instances of Ramsey's theorem for pairs. Note that in the following theorem, the colorings $g_0, \dots, g_{r-1}$ are required to belong to~$\M$, contrary to \Cref{param-1}.

\begin{theorem}\label{param-2}
    Let \(\M\) be a countable cross-constraint ideal such that $\M \models \COH$ and let \(f : \NN \to 3\) be hyperimmune relative to any element of \(\M\), then for any $r \in \NN$ and any \(g_0, \dots, g_{r-1} : [\NN]^2 \to 2\) in~$\M$, there are infinite $g_i$-homogeneous sets~$G_i$ for every~$i < r$, such that $\bigoplus_{i<r} G_i$ does not compute any infinite $f$-homogeneous set.
\end{theorem}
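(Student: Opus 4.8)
The plan is to derive this from \Cref{param-1} by stabilizing the colorings $g_0, \dots, g_{r-1}$ via cohesiveness inside $\M$, exactly as one passes from $\SRT^2_k$-phenomena to $\RT^2_k$-phenomena. Since each $g_i$ belongs to the Turing ideal $\M$, so does the sequence $\vec{R} \coloneqq \langle \{ y : g_i(x,y) = 1 \} : i < r,\ x \in \NN \rangle$; as $\M \models \COH$, I would fix an infinite $\vec{R}$-cohesive set $C \in \M$. By cohesiveness, for every $i < r$ and every $x$ the color $\widehat{g_i}(x) \coloneqq \lim_{y \in C} g_i(x,y)$ is well-defined, so each $g_i$ is stable along $C$. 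Letting $p_C : \NN \to C$ be the increasing enumeration of $C$, define $\widetilde{g_i} : \NN \to 2$ by $\widetilde{g_i}(n) \coloneqq \widehat{g_i}(p_C(n))$ and set $\widetilde{g} \coloneqq (\widetilde{g_0}, \dots, \widetilde{g_{r-1}}) \in \X(1)$; note that \Cref{param-1} imposes no effectiveness restriction on this instance.

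Then I would apply \Cref{param-1} to the countable cross-constraint ideal $\M$, the coloring $f$ (hyperimmune relative to every element of $\M$ by hypothesis), and the instance $\widetilde{g}$. This yields a solution $\vec{G} = (\widetilde{G}_0, \dots, \widetilde{G}_{r-1})$ of $\widetilde{g}$ — so each $\widetilde{G}_s$ is an infinite set that is $\widetilde{g_s}$-homogeneous for some color $c_s < 2$ — such that for every $Z \in \M$, the set $\vec{G} \oplus Z$ computes no infinite $f$-homogeneous set.

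It then remains to turn each $\widetilde{G}_s$ into a genuine $g_s$-homogeneous set without leaving the bound $\vec{G} \oplus (\text{something in }\M)$. For each $s < r$, the set $G'_s \coloneqq \{ p_C(n) : n \in \widetilde{G}_s \}$ is an infinite subset of $C$ every element $x$ of which satisfies $\lim_{y \in C} g_s(x,y) = c_s$. I would thin $G'_s$ to a set $G_s$ greedily: enumerate $G'_s$ in increasing order, put its least element into $G_s$, and, having chosen $x_0 < \dots < x_k$, let $x_{k+1}$ be the least element of $G'_s$ above $x_k$ with $g_s(x_j, x_{k+1}) = c_s$ for all $j \leq k$; such an element exists because, for each fixed $j \leq k$, all but finitely many elements of the infinite set $G'_s \subseteq C$ have this property. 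Then $G_s$ is an infinite $g_s$-homogeneous set for color $c_s$, and $G_s \leq_T \widetilde{G}_s \oplus C \oplus g_s$.

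Finally, set $W \coloneqq C \oplus \bigoplus_{s<r} g_s$; since $C, g_0, \dots, g_{r-1} \in \M$ and $\M$ is a Turing ideal, $W \in \M$. By the previous step $\bigoplus_{s<r} G_s \leq_T \vec{G} \oplus W$, and by the conclusion of \Cref{param-1} applied with $Z = W$, the set $\vec{G} \oplus W$ computes no infinite $f$-homogeneous set; hence neither does $\bigoplus_{s<r} G_s$, and as each $G_s$ is an infinite $g_s$-homogeneous set this proves the theorem. The only genuinely delicate points are the bookkeeping ones: that the stabilization step produces its cohesive set \emph{inside} $\M$ (this is precisely where the hypothesis $\M \models \COH$, absent from \Cref{param-1}, is used), and that the destabilization step is computable from $\widetilde{G}_s$ together with oracles $C$ and $g_s$ already available in $\M$, so that $\bigoplus_s G_s$ is computable from $\vec{G} \oplus Z$ for a single $Z \in \M$.
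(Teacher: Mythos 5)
Your proposal is correct and follows essentially the same route as the paper: use $\M \models \COH$ to get a cohesive set $C \in \M$ stabilizing the $g_i$, pass to the induced $\RT^1_2$-instances $\widetilde{g_i}$, apply \Cref{param-1}, and then reconstruct $g_i$-homogeneous sets via $C$ and $g_i$ as the auxiliary oracle $Z \in \M$. You are somewhat more explicit than the paper about the destabilization step and about packaging $C \oplus \bigoplus_s g_s$ as the single $Z \in \M$, which the paper leaves terse (its ``$Z \coloneqq \bigoplus_{i<r} C_i$'' appears to be a typo for what you spell out).
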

\begin{proof}
    First, consider the sequence of sets \(\vec{R}\coloneqq(R_{x, j, i})_{x\in\NN, j<2, i<r}\) defined by \(R_{x, j, i}\coloneqq\{y:g_i(x, y)=j\}\). This sequence is in \(\M\), because \(\forall i<r, g_i\in\M\). And since \(\M\models\COH\), there is an infinite \(\vec{R}\)-cohesive set \(C\coloneqq\{c_0 < c_1 < \ldots\}\in\M\). By choice of~$\vec{R}$, for each \(i<r\), the coloring \(g_i\uh_{[C]^2}\) is stable
    Indeed, for \(x\in\NN\) and \(i<r\), there is exactly one \(j<2\) for which \(C\subseteq^* R_{x, j, i}\), and so this implies that \(\lim_{y\in C} g_i(x, y)=j\).
    
    Now let \(h_i:\NN\to 2, n\mapsto\lim_m g_i(c_n, c_m)\), and \(\vec{h}\coloneqq(h_0, \ldots, h_{r-1})\).
    By \Cref{param-1} with \((f, (h_i)_{i<r})\), there are infinite \(\vec{h}\)-homogeneous sets \(\vec{H}\) such that, for any \(Z\in\M\), \(\vec{H}\oplus Z\) does not compute an infinite \(f\)-homogeneous set.
    In particular this is true for \(Z\coloneqq \bigoplus_{i<r} C_i\), and since for any \(i<r\), \(H_i\oplus C_i\) computes an infinite \(g_i\)-homogeneous set \(G_i\), we deduce that \(\bigoplus_{i<r} G_i\) does not compute any infinite \(f\)-homogeneous set.
\end{proof}

\section{Cross-constraint basis theorems}\label[section]{sec:basis}

As mentioned before, the two main theorems of \Cref{sect:general-framework} are parameterized by cross-constraint ideals, which are themselves built using iterated applications of the cross-constraint principle ($\CC$). 
In this section, we prove various basis theorems for $\CC$, namely, the $\Delta^0_2$, low, cone avoidance, and non-$\Sigma^0_1$ preservation basis theorems. The $\Delta^0_2$ and cone avoidance basis theorems for~$\CC$ were previously proven by Liu~\cite{liu2023coding}, but we give a new proof of the cone avoidance basis theorem which resembles more its classical counterpart for $\Pi^0_1$ classes.


\subsection{Conditions}

The most famous basis theorems for $\Pi^0_1$ classes are all proven using effective versions of forcing with binary trees. Similarly, all the basis theorems for $\CC$ in this section will be proven with effective variants of the same notion of forcing that we now describe.

\begin{definition}
    For \(k\in\NN\) and \(m\leqslant n\in\NN\), given chains \(\rho^0, \rho^1\in k^m\), \(\mu^0, \mu^1\in k^n\), and \(\tau^0, \tau^1\in k^{n-m}\) such that \(\forall i<2, \mu^i=\rho^i\cdot\tau^i\).
    We say that \((\mu^0, \mu^1)\) is \emph{completely incompatible over \((\rho^0, \rho^1)\)} if \(\forall i<n-m, \tau^0(i)\neq\tau^1(i)\).
    When \((\rho^0, \rho^1)=(\varepsilon, \varepsilon)\), we simply say \emph{completely incompatible}.
\end{definition}


\begin{definition}
    A \emph{condition-tuple} for a class \(\P\subseteq\X\) is a tuple $(\rho^i, \sigma^i)_{i < 2}\in\X_{<\NN}^2$ such that \(\P\) is left-full below \((\rho^i, \sigma^i)\) for both \(i<2\), and \(|\rho^0|=|\rho^1|\).
    A condition-tuple $(\widehat{\rho}^i, \widehat{\sigma}^i)_{i < 2}$ \emph{extends} another $(\rho^i, \sigma^i)_{i < 2}$, written $(\widehat{\rho}^i, \widehat{\sigma}^i)_{i < 2} \leq (\rho^i, \sigma^i)_{i < 2}$, if
    \begin{enumerate}
        \item for both \(i<2\), $(\widehat{\rho}^i, \widehat{\sigma}^i) \succcurlyeq (\rho^i, \sigma^i)$
        \item \((\widehat\rho^0, \widehat\rho^1)\) is completely incompatible over \((\rho^0, \rho^1)\)
    \end{enumerate}
\end{definition}

\begin{remark}
     A pair \((\rho^i, \sigma^i)\) of a condition-tuple is seen as a finite approximation of some \((X^i, Y^i)\in\P\) we wish to build, \ie of an element in \(\P\cap\left[(\rho^i, \sigma^i)\right]\), such that \((X^0, X^1)\) is completely incompatible over \((\rho^0, \rho^1)\).
\end{remark}

\subsection{\texorpdfstring{Cross-constraint $\Delta^0_2$ basis theorem}{Cross-constraint Delta02 basis theorem}}

The following $\Delta^0_2$ basis theorem is an effective analysis of the simplest known combinatorial proof of the cross-constraint problem. It was proven by Liu~\cite{liu2023coding}. We provide the original proof as a warmup for the next basis theorems and for the sake of completeness. 


\begin{theorem}[{Liu~\cite[Lemma 2.2]{liu2023coding}}]\label{delta-basis}
    Any left-full computable instance \(T\) of \(\CC\) has a solution \((X^i, Y^i)_{i<2}\) such that \((X^0, Y^0)\oplus(X^1, Y^1)\leqslant_T\emptyset'\).
\end{theorem}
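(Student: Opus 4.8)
The plan is to build the two paths $(X^i, Y^i)_{i<2}$ simultaneously by a $\emptyset'$-effective forcing construction using the condition-tuples defined above, so that the final object is $\Delta^0_2$. Since $T$ is left-full below $(\varepsilon, \varepsilon)$, the trivial tuple $(\varepsilon, \varepsilon, \varepsilon, \varepsilon)$ is a valid condition-tuple, and I start from it. A sufficiently generic filter through this forcing yields paths $(X^0, X^1)$ with $(X^0, X^1)$ completely incompatible over $(\varepsilon,\varepsilon)$ — in particular $X^0(n) \neq X^1(n)$ for all $n$ past the stem, which forces $(X^0, X^1)$ to be finitely compatible (for each color $j<3$, the set of $n$ with $X^0(n) = X^1(n) = j$ is contained in the common stem, hence finite). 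So the ``finitely compatible'' half of being a $\CC$-solution comes for free from clause (2) of the extension relation.

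The substantive requirements are: (i) meeting density requirements that grow $|\rho^i|$ unboundedly (so $X^0, X^1$ are genuinely infinite paths of the cross-tree), and (ii) for each $s < r$, arranging that $(Y^0_s, Y^1_s)$ is infinitely compatible, i.e. that there is some color $c_s < 2$ with $(Y^0_s)^{-1}(c_s) \cap (Y^1_s)^{-1}(c_s)$ infinite. For (i), I would show that given any condition-tuple $(\rho^i, \sigma^i)_{i<2}$ and any target length $n$, there is an extension with $|\rho^0| = |\rho^1| \geq n$: extend $\rho^0$ and $\rho^1$ to length $n+|\rho^0|$ in a \emph{completely incompatible} way over $(\rho^0, \rho^1)$ — possible because $k = 3 \geq 2$, so at each new coordinate there are enough colors to pick two distinct values — and then invoke \Cref{left-full-ext}(a) and the left-fullness of $T$ below each $(\rho^i, \sigma^i)$ to find, for each $i$, a compatible $\tau^i \succcurlyeq \sigma^i$ witnessing that $T$ is still left-full below the extended pair. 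Whether the resulting tuple is itself a condition-tuple needs the left-fullness to survive the $\rho$-extension, which is exactly Item (a) of \Cref{left-full-ext} (extending the first component and keeping/shortening the second preserves left-fullness). Each such ``extend past length $n$'' step is decidable by $\emptyset'$ (it is a $\Sigma^0_1$ search for a suitable finite extension inside the computable tree $T$), so iterating gives a $\Delta^0_2$ construction.

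For requirement (ii), the key point is that along any branch of a left-full cross-tree the second components cannot avoid sharing a color infinitely often. Concretely, fix $s < r$; I would argue that for any condition-tuple there is an extension that makes progress toward infinite compatibility on coordinate $s$: because $T[\rho]$ is (WLOG, after passing to the right-pruned version via \Cref{left-full-p01}(c)) pruned and $T$ is left-full below both $(\rho^i, \sigma^i)$, extending $\rho^0, \rho^1$ a little forces $\sigma^0, \sigma^1$ to be extendible, and by a counting/pigeonhole argument on the finitely many colors $<2$ one of the two colors must recur in both extended strings on coordinate $s$; a bookkeeping over all $s < r$ and a standard pairing of requirements produces, in the limit, colors $(c_s)_{s<r}$ realized infinitely often in each $(Y^0_s, Y^1_s)$. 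The main obstacle I anticipate is precisely this step: one must show infinite compatibility of the $Y$-coordinates is \emph{forceable} rather than merely generic, i.e. that left-fullness genuinely prevents the adversary from diverging the two $\sigma^i$'s into disjoint color-sets on some coordinate $s$ — this is where the asymmetry between the first and second components (and the fact that $r$ many $2$-valued coordinates are packed together) really matters, and it is the combinatorial heart of the cross-constraint principle. Once density of all these requirements is established, a $\emptyset'$-computable enumeration of the (countably many) requirements and the (computable) forcing partial order yields a generic sequence of condition-tuples whose union $(X^i, Y^i)_{i<2}$ is $\Delta^0_2$ and is the desired $\CC$-solution.
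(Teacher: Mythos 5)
Your framework is correct and matches the paper's: force with condition-tuples, run the search with a $\emptyset'$ oracle (left-fullness is $\Pi^0_1$, so extendibility is $\emptyset'$-decidable by \Cref{left-full-p01}), and observe that finite compatibility of $(X^0, X^1)$ comes for free from clause (2) of the extension relation. The step where you are stuck, however, is precisely the place where your proposed fix does not work, and the gap is not cosmetic.

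You suggest that a pigeonhole argument shows density of the requirement ``$\sigma^0_s$ and $\sigma^1_s$ agree at some new position.'' This is false. Consider $r=1$ and the $\Pi^0_1$ class $\P = \{(X,Y) : Y \text{ is constant}\}$, which is left-full. Take the condition-tuple $\big((0),(0),(1),(1)\big)$. Every left-full extension of $\big((0),(0)\big)$ has $\sigma = 0^k$ and every left-full extension of $\big((1),(1)\big)$ has $\sigma = 1^k$; the two $Y$-coordinates are forced to disagree at \emph{every} new position. This condition \emph{excludes} component $s=0$, so there is no dense set of conditions making the two $\sigma_s$'s agree, and no amount of pigeonhole saves you. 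The paper does not try to prove density of that requirement directly. Instead it proves \Cref{lem:delta-basis-excludes}: if a condition excludes component $s$, then (for each side $i$) the eventual value $\sigma_s(n)$ is \emph{uniquely determined} for cofinitely many $n$, so there is a forced limit coloring $Y$. The proof of that lemma is where the $3$-versus-$2$ asymmetry is actually spent: in the ``non-determined'' case one picks, for each $k<2$, a left-full extension $(\rho_k,\sigma_k)$ of side $i$ with $\sigma_{k,s}(n)\neq k$, and then uses that the $\rho$'s live in $3^{<\NN}$ to find a single $\widetilde\rho \succcurlyeq \rho^{1-i}$ completely incompatible with \emph{both} $\rho_0$ and $\rho_1$ simultaneously (at each new coordinate, $\rho_0$ and $\rho_1$ occupy at most two of the three values, leaving a third free). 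Choosing $k$ to match the value $\widehat\sigma^{1-i}_s(n)$ of a left-full extension over $\widetilde\rho$ then produces an extension of the condition-tuple that is not completely incompatible on component $s$, contradicting exclusion.

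Once you have \Cref{lem:delta-basis-excludes}, the construction must be paired with a \emph{restart} device which you did not anticipate: if at stage $t$ the condition $(\rho^i_t,\sigma^i_t)_{i<2}$ excludes some set $I \subseteq r$ of components, restart the whole construction from $\big(\rho^0_t,\sigma^0_t,\rho^0_t,\sigma^0_t\big)$, i.e.\ with both sides of the condition-tuple identical. \Cref{lem:delta-basis-excludes} then guarantees that, for every $s\in I$, both $Y^0_s$ and $Y^1_s$ converge to the \emph{same} forced limit $Y$, hence are automatically infinitely compatible no matter what extensions are chosen afterwards; and since $r$ is standard the construction can restart only finitely often. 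Your sketch has the right skeleton (condition-tuples, $\emptyset'$-effective search, $X$-side for free, $Y$-side is the heart), and it correctly flags the $Y$-side as the obstacle, but the actual mechanism — the dichotomy ``either you can keep hitting a new agreement, or the $\sigma_s$-limit is forced and you restart with matched sides'' — is missing, and the pigeonhole claim you offer in its place is incorrect.
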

\begin{proof}
    Let \(\P\coloneqq [T]\). 
    We build a \(\emptyset'\)-computable sequence of condition-tuples \((\rho^i_0, \sigma^i_0)_{i<2} \geq (\rho^i_1, \sigma^i_1)_{i<2} \geq \ldots\), such that, for \(i<2\) the functions \(X^i:=\bigcup_t \rho^i_t\) and \(Y^i:=\bigcup_t \sigma^i_t\) are the desired witnesses. To simplify notation, we simply say condition for condition-tuple.
    
    Given a condition \((\rho^i_t, \sigma^i_t)_{i<2}\), we want to extend it to \((\rho^i_{t+1}, \sigma^i_{t+1})_{i<2}\) such that for all \(s<r\), \((\sigma^0_{t+1, s}, \sigma^1_{t+1, s})\) is not completely incompatible over \((\sigma^0_{t, s}, \sigma^1_{t, s})\). 
    We proceed as follows:
    say \((\rho^i_t, \sigma^i_t)_{i<2}\) has been constructed, search for an extension \((\rho^i_{t+1}, \sigma^i_{t+1})_{i<2}\) such that the property above holds. By \Cref{left-full-p01}, this search requires the use of \(\emptyset'\) to check whether \(\P\) is left-full below \((\rho^i_{t+1}, \sigma^i_{t+1})\) or not (for each i).

    If there is always such an extension, then the construction is complete. Otherwise, for some condition \((\rho^i_t, \sigma^i_t)_{i<2}\), there is some \(s<r\) such that all extensions are completely incompatible over \((\sigma^0_{t, s}, \sigma^1_{t, s})\). Such a condition is said to \emph{exclude component \(s<r\)}. Nevertheless, by considering the following lemma, the construction can be completed.
    
    \begin{lemma}\label[lemma]{lem:delta-basis-excludes}
        If a condition \((\rho^i, \sigma^i)_{i<2}\) excludes component \(s<r\), then for every~$i < 2$, there is a coloring $Y \in 2^\NN$ such that for every $(\widehat\rho, \widehat\sigma) \leqslant (\rho^i, \sigma^i)$, if \(\P\) is left-full below $(\widehat\rho, \widehat\sigma)$, then $\widehat\sigma_s$ is compatible with $Y$.
    \end{lemma}
    \begin{proof}
        Fix \((\rho^i, \sigma^i)_{i<2}\) such that it excludes component \(s<r\) and fix~$i < 2$.
        
        Suppose first that for cofinitely many~$n \in \NN$, there exists a $k_n < 2$ such that for every $(\rho', \sigma') \leq (\rho^i, \sigma^i)$ for which \(\P\) is left-full, if $|\sigma'_s| > n$ then $\sigma'_s(n) = k_n$. Then let~$Y(n) \coloneqq k_n$. This is well-defined since by \Cref{left-full-ext}, there exist suitable extensions of every length.
        
        Suppose now that for infinitely many~$n \in \NN$, for every~$k < 2$, there is a $(\rho', \sigma') \leq (\rho^i, \sigma^i)$ for which \(\P\) is left-full and such that $\sigma'_s(n) \neq k$. Fix~$n > |\sigma^i_s|$, and for each~$k < 2$, let~$(\rho_k, \sigma_k) \leq (\rho^i, \sigma^i)$ be a condition extension such that $\sigma_{k,s}(n) \neq k$. 
        
        Since we are working in a 3-valued realm, there exists a $\widetilde{\rho} \succcurlyeq \rho^{1-i}$ of length greater than $\max(|\rho_0|, |\rho_1|)$ such that both $(\widetilde{\rho}, \rho_0)$ and $(\widetilde{\rho}, \rho_1)$ are completely incompatible over $(\rho^{1-i}, \rho^i)$. 
        Pick any pair $(\widehat{\rho}^{1-i}, \widehat{\sigma}^{1-i}) \leq (\widetilde{\rho}, \sigma^{1-i})$ for which \(\P\) is left-full and such that $|\widehat{\sigma}_s^{1-i}| > n$, this is possible by \Cref{left-full-ext}. Let~$k = \widehat{\sigma}_s^{1-i}(n)$. Then $(\widehat{\rho}^{1-i}, \widehat{\sigma}^{1-i}, \rho_k, \sigma_k)$ is an extension contradicting the fact that \((\rho^i, \sigma^i)_{i<2}\) excludes component \(s<r\)
    \end{proof}

    If at some point in the construction the condition \((\rho^i_t, \sigma^i_t)_{i<2}\) excludes the components of \(I\subseteq r\), then we restart the construction from the begining with \((\rho^i_0, \sigma^i_0)_{i<2}:=(\rho^0_t, \sigma^0_t, \rho^0_t, \sigma^0_t)\). The lemma ensures that \((Y^0_s, Y^1_s)\) will not be finitely compatible for all \(s\in I\), no matter the conditions selected for the sequence. And since \(r\) is a standard integer, the construction can only be restarted a finite number of times.
\end{proof}

\begin{remark}
The fact that $X^i$ is an instance of $\RT^1_3$ whereas each~$Y^i_s$ is an instance of~$\RT^1_2$ is exploited in the proof of \Cref{lem:delta-basis-excludes}, to ensure the existence of a 3-valued string $\widetilde{\rho}$ completely incompatible with two other strings simultaneously.
\end{remark}

\begin{corollary}\label{exists-arith-ideal}
    The class of all arithmetic sets is a cross-constraint ideal.
\end{corollary}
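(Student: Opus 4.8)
The goal is to verify that the collection $\mathcal{A}$ of arithmetic sets is a cross-constraint ideal, i.e.\ that $\mathcal{A}$ is a Turing ideal closed under solving $\CC$-instances: whenever $T \in \mathcal{A}$ is a left-full cross-tree, there is a solution $(X^i, Y^i)_{i<2}$ with $(X^0, Y^0) \oplus (X^1, Y^1) \in \mathcal{A}$. The fact that $\mathcal{A}$ is a Turing ideal is standard (it is closed downward under $\leq_T$ and under $\oplus$), so the only real content is the closure under $\CC$.

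\textbf{Relativization of \Cref{delta-basis}.} The key step is to observe that \Cref{delta-basis} relativizes: for any oracle $Z$, any left-full $Z$-computable instance $T$ of $\CC$ has a solution $(X^i, Y^i)_{i<2}$ with $(X^0, Y^0) \oplus (X^1, Y^1) \leq_T Z'$. The proof of \Cref{delta-basis} goes through verbatim relative to $Z$: the construction of the descending sequence of condition-tuples uses $Z'$ exactly where the unrelativized proof uses $\emptyset'$, namely to decide left-fullness below a given pair via \Cref{left-full-p01} (this is a $\Pi^0_1(T) \subseteq \Pi^0_1(Z)$ question, hence $Z'$-decidable), and the restart mechanism based on \Cref{lem:delta-basis-excludes} is purely combinatorial and only invokes $r$ being standard, so it is unaffected. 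Thus the relativized solution is $\Delta^0_2(Z)$, hence arithmetic in $Z$.

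\textbf{Conclusion.} Now suppose $T \in \mathcal{A}$ is a left-full cross-tree. Since $T$ is arithmetic, fix a set $Z \in \mathcal{A}$ with $T \leq_T Z$ (for instance $Z = T$ itself, viewed as a subset of $\NN$ under a fixed coding of $\X_{<\NN}$; it is arithmetic). Apply the relativized \Cref{delta-basis} with oracle $Z$ to obtain a $\CC$-solution $(X^i, Y^i)_{i<2}$ of $T$ with $(X^0, Y^0) \oplus (X^1, Y^1) \leq_T Z'$. Since $Z$ is arithmetic, $Z'$ is arithmetic, hence $(X^0, Y^0) \oplus (X^1, Y^1)$ is arithmetic and belongs to $\mathcal{A}$. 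This establishes the defining property of a cross-constraint ideal, so $\mathcal{A}$ is one.

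\textbf{Main obstacle.} There is essentially no obstacle here beyond carefully checking that the proof of \Cref{delta-basis} relativizes cleanly; the one point deserving a sentence of care is that the oracle used in the construction must compute $T$ (so that left-fullness questions are $\Pi^0_1$ in that oracle) and that the resulting solution lands one jump above — both of which are transparent from the original argument. One could alternatively phrase this as: the map $T \mapsto (X^i, Y^i)_{i<2}$ produced by the construction is arithmetic in $T$, so iterating it within the arithmetic sets stays within the arithmetic sets.
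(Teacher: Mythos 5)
Your proposal is correct and matches the intended argument: the paper states the corollary without proof immediately after \Cref{delta-basis}, and the implicit reasoning is exactly the relativization you spell out (relativize the $\Delta^0_2$ basis theorem to any arithmetic oracle computing $T$, note the solution is $\Delta^0_2$ in that oracle and hence arithmetic, and use that arithmetic sets form a Turing ideal). You have correctly identified the only point needing care, namely that the $\emptyset'$-oracle in the proof of \Cref{delta-basis} becomes $Z'$ and the rest of the argument is combinatorial and oracle-independent.
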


\subsection{Combinatorial lemmas}\label[section]{sect:combi-lemmas}

All the remaining basis theorems will involve some kind of first-jump control.
They will require a much more involved combinatorial machinery that we now develop. These combinatorics are all due to Liu~\cite{liu2023coding}, with a slightly different organization and terminology.

\begin{definition}
    For \(k\in\NN\) and \(m\leqslant n\in\NN\), given chains \(\rho^0, \rho^1\in k^{\leqslant m}\), a total function \(\varphi : k^m \to k^n\) \emph{preserves incompatibility over \((\rho^0, \rho^1)\)} if
    \begin{enumerate}
        \item \(\forall\mu\in k^m, \varphi(\mu) \succcurlyeq \mu\)
        \item For all \(\mu^0, \mu^1\in k^m\) such that \((\mu^0, \mu^1)\succcurlyeq(\rho^0, \rho^1)\), the pair \((\varphi(\mu^0), \varphi(\mu^1))\) is completely incompatible over \((\mu^0, \mu^1)\)
    \end{enumerate}
\end{definition}
\begin{remark}
    Note that for \(\widehat\rho^0, \widehat\rho^1\in k^{\leqslant n}\) extending \(\rho^0, \rho^1\) respectively, if \(\varphi\) preserves incompatibility over \((\rho^0, \rho^1)\) then it also preserves incompatibility over \((\widehat\rho^0, \widehat\rho^1)\).
\end{remark}

The following two lemmas are purely technical ones, used only locally to obtain the main combinatorial lemmas of this section.

\begin{lemma}[{Liu~\cite[Lemma 4.3]{liu2023coding}}]\label[lemma]{ext1}
    For any \(n\leqslant n'\leqslant m\in\NN\), \(\widehat\rho\in 3^m\) extending \(\rho\in 3^n\), and any map \(\psi:3^n\to 3^{n'}\) preserving incompatibility over \((\varepsilon, \varepsilon)\) such that \(\psi(\rho)\preccurlyeq\widehat\rho\), there is a map \(\varphi:3^n\to 3^m\) extending \(\psi\) and preserving incompatibility over \((\varepsilon, \varepsilon)\), such that \(\varphi(\rho)=\widehat\rho\).
\end{lemma}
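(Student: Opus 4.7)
The plan is to construct $\varphi$ greedily, position by position, using the freedom available once $\psi$ and the constraint $\varphi(\rho) = \widehat\rho$ have been imposed. The key observation is that preservation of incompatibility over $(\varepsilon,\varepsilon)$, once restricted to the fresh positions $j \in [n', m-1]$ where new values still need to be chosen, amounts to requiring the map $\mu \mapsto \varphi(\mu)(j)$ to be an injection from $3^n$ into $\{0,1,2\}$ at each such $j$.

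Concretely, first I would set $\varphi(\mu)\uh_{n'} := \psi(\mu)$ for every $\mu \in 3^n$; this is forced by the requirement that $\varphi$ extend $\psi$, and the inclusion $\psi(\rho) \preccurlyeq \widehat\rho$ given by the hypothesis makes this consistent with the eventual equality $\varphi(\rho) = \widehat\rho$. Then, for each position $j \in [n', m-1]$, I would pick an injection $3^n \hookrightarrow \{0,1,2\}$ that sends $\rho$ to $\widehat\rho(j)$, and declare $\varphi(\mu)(j)$ to be the image of $\mu$ under this injection. The choices at distinct positions are independent, so a canonical one (for instance, lexicographically least) may be taken throughout to make $\varphi$ definable from the data.

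It remains to verify the three required properties. Extension of $\psi$ and the equality $\varphi(\rho) = \widehat\rho$ are built directly into the construction. For preservation of incompatibility over $(\varepsilon,\varepsilon)$, given distinct $\mu^0, \mu^1 \in 3^n$ one has to show $\varphi(\mu^0)(j) \neq \varphi(\mu^1)(j)$ for every $j \in [n, m-1]$: on $[n, n'-1]$ this is inherited directly from the corresponding property of $\psi$, and on $[n', m-1]$ it follows from the injectivity imposed at each step. The main point that really needs attention is the existence, at each position $j \in [n', m-1]$, of an injection sending $\rho$ to $\widehat\rho(j)$; this is where the combinatorics of the three-valued alphabet enters, and once granted, the remainder is a routine check. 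If needed, the construction can also be rephrased inductively on $m - n'$: at each step one extends every $\varphi(\mu)$ by a single character, using \Cref{left-full-p01}-style arguments to see that there is always room to place the prescribed value $\widehat\rho(j)$ while keeping the step-$j$ map injective.
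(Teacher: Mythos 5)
Your overall strategy (keep $\psi$ on the first $n'$ symbols, then fill in the fresh positions $j\in[n',m-1]$ independently) is structurally the same as the paper's, but the combinatorial requirement you extract at each fresh position is wrong, and it is exactly the step you defer. You claim that preservation of incompatibility at a fresh position $j$ amounts to the map $\mu\mapsto\varphi(\mu)(j)$ being an injection from $3^n$ into $\{0,1,2\}$. No such injection exists once $n\geq 2$, since the set $3^n$ has $3^n>3$ elements; so the object whose existence you call ``the main point that really needs attention'' does not exist, and the construction cannot be completed as described. The same misreading already undermines your claim that positions in $[n,n'-1]$ are ``inherited from $\psi$'': under your reading, the hypothesis on $\psi$ would itself be unsatisfiable whenever $n'>n$ and $n\geq 2$.

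The source of the problem is the quantifier in the definition of ``preserving incompatibility over $(\varepsilon,\varepsilon)$'': the conclusion that $(\varphi(\mu^0),\varphi(\mu^1))$ is completely incompatible over $(\mu^0,\mu^1)$ is only needed for pairs $\mu^0,\mu^1$ that are themselves completely incompatible, i.e.\ disagree at \emph{every} coordinate. This is the reading used in \Cref{ext2} and \Cref{sufficient-imply-extendible}, and the only one under which the lemma (or even the toy example $\eta\mapsto\eta\cdot\eta(0)^{m-n}$ given in the paper) can hold. Any two completely incompatible strings of $3^n$ already disagree at coordinate $0$, and at most three strings can be pairwise completely incompatible; so at a fresh position $j$ it suffices that $\varphi(\mu)(j)$ depend only on $\mu(0)$, through a permutation of $\{0,1,2\}$ sending $\rho(0)$ to $\widehat\rho(j)$. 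This is exactly the paper's construction: writing $\widehat\rho=\psi(\rho)\cdot\tau$ and setting $\tau_a(x)\coloneqq\tau(x)+a-\rho(0)\bmod 3$, it defines $\varphi(\mu)\coloneqq\psi(\mu)\cdot\tau_{\mu(0)}$; then $\varphi(\rho)=\psi(\rho)\cdot\tau=\widehat\rho$, and for completely incompatible $\mu,\nu$ the tails $\tau_{\mu(0)},\tau_{\nu(0)}$ disagree everywhere because $\mu(0)\neq\nu(0)$. Your position-by-position scheme becomes correct once ``injection on $3^n$'' is replaced by ``permutation of $\{0,1,2\}$ precomposed with $\mu\mapsto\mu(0)$''.
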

\begin{proof}
    To give an intuition, if we just wanted to show the existence of a map \(\varphi:3^n\to 3^m\) preserving incompatibility over \((\varepsilon, \varepsilon)\), we could have taken the function which maps \(\eta\in 3^n\) to \(\eta\cdot a^{m-n}\) where \(a\coloneqq \eta(0)\).
    
    Fix~$\rho$, $\widehat\rho$ and \(\psi\) as in the statement of the lemma, and let~$\tau$ be such that $\rho \cdot \tau = \widehat\rho$.
    For every~$a < 3$, let~$\tau_a \in 3^{m-n'}$ be defined by $\tau_a(x) = \tau(x)+a-\rho(0) \mod 3$. In particular, $\tau_{\rho(0)} = \tau$ and for every~$a \neq b$, $\tau_a$ and $\tau_b$ are preserving incompatibility over \((\varepsilon, \varepsilon)\).
    
    Let~$\varphi$ be the function \(\varphi'\circ \psi\) where \(\varphi'\) maps \(\eta\in 3^n\) to \(\eta\cdot \tau_{\eta(0)}\). Note that $\varphi'(\rho) = \rho \cdot \tau_{\rho(0)} = \widehat\rho$. Moreover, if $\mu, \nu \in 3^n$ are completely incompatible, then $\psi(\mu)(0) \neq \psi(\nu)(0)$, hence $\tau_{\mu(0)}$ and $\tau_{\nu(0)}$ are completely incompatible. It follows that $\varphi(\mu) = \mu \cdot \tau_{\mu(0)}$ and $\varphi(\nu) = \nu \cdot \tau_{\nu(0)}$ are completely incompatible themselves.
\end{proof}

\begin{lemma}[{Liu~\cite[Lemma 4.4]{liu2023coding}}]\label{ext2}
    Let \(N\in\NN\) and \(f:3^{<\NN}\to \mathfrak{P}\big(\X_N(1)\big)\) be a total and non-increasing function. For every \(n\in\NN\), there is \(m\in\NN\) and a map \(\varphi:3^n\to 3^m\) preserving incompatibility over \((\varepsilon, \varepsilon)\), such that \(\forall \rho\in 3^n, \forall \nu\in\X_N(1)\) either \(\forall \widehat\rho\succcurlyeq\varphi(\rho), \nu\in f(\widehat\rho)\) or \(\nu\notin f(\varphi(\rho))\).

\end{lemma}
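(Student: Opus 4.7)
The plan is to iteratively process the pairs $(\rho, \nu) \in 3^n \times \X_N(1)$ one at a time, maintaining a map $3^n \to 3^{m_k}$ preserving incompatibility over $(\varepsilon, \varepsilon)$, and using \Cref{ext1} to perform each individual extension. Since $3^n \times \X_N(1)$ is finite, the procedure will terminate after finitely many steps and yield the desired $\varphi$.

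\textbf{Construction.} Start with $\psi_0 \coloneqq \id : 3^n \to 3^n$, which vacuously preserves incompatibility (the ``tails'' in the definition are empty). Enumerate the pairs $(\rho_k, \nu_k)_{k < K}$ where $K \coloneqq 3^n \cdot |\X_N(1)|$. At step $k$, given a map $\psi_k : 3^n \to 3^{m_k}$ preserving incompatibility, decide whether there exists some $\widehat\rho \succcurlyeq \psi_k(\rho_k)$ with $\nu_k \notin f(\widehat\rho)$. If so, apply \Cref{ext1} with source $\rho_k$ and target $\widehat\rho$ to extend $\psi_k$ to $\psi_{k+1} : 3^n \to 3^{m_{k+1}}$ preserving incompatibility and satisfying $\psi_{k+1}(\rho_k) = \widehat\rho$; otherwise, set $\psi_{k+1} \coloneqq \psi_k$. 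Finally, let $m \coloneqq m_K$ and $\varphi \coloneqq \psi_K$.

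\textbf{Verification.} The correctness hinges on the invariant that, once a pair $(\rho_k, \nu_k)$ has been handled at step $k$, the intended dichotomy at $\varphi(\rho_k)$ is preserved under all subsequent extensions. In the ``yes'' branch we have $\nu_k \notin f(\psi_{k+1}(\rho_k))$, and since $\varphi(\rho_k) \succcurlyeq \psi_{k+1}(\rho_k)$ and $f$ is non-increasing, we still get $\nu_k \notin f(\varphi(\rho_k))$. In the ``no'' branch every $\widehat\rho \succcurlyeq \psi_k(\rho_k)$ already satisfies $\nu_k \in f(\widehat\rho)$, and a fortiori every $\widehat\rho \succcurlyeq \varphi(\rho_k)$ does.

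\textbf{Main obstacle.} The crux, which is precisely what \Cref{ext1} takes care of, is that at each step we need to prescribe one specific output $\psi_{k+1}(\rho_k) = \widehat\rho$ while simultaneously extending all the other outputs of $\psi_k$ in a way that still preserves incompatibility over every pair of inputs. Without that lemma, tweaking a single value could easily destroy the completely-incompatible tails required between other pairs of images. With it in hand, the argument reduces to the finite bookkeeping described above.
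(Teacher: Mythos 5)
Your proof is correct and follows essentially the same route as the paper's: both iterate over the finitely many pairs $(\rho,\nu)\in 3^n\times\X_N(1)$, starting from the identity map, and at each step either find an extension killing $\nu$ from $f$ (using \Cref{ext1} to extend the whole map while preserving incompatibility, with monotonicity of $f$ guaranteeing the property persists) or conclude that $\nu$ survives in all further extensions. The bookkeeping and the verification of the dichotomy match the paper's argument.
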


\begin{proof}
    We construct a finite sequence of integers \((n_s)_{s\leqslant p}\), for some \(p\in\NN\), along with a finite sequence of maps \((\varphi_s:3^{n_0}\to 3^{n_s})_{s\leq p}\) preserving incompatibility over \((\varepsilon, \varepsilon)\).

    For each \(\nu\in\X_N(1)\) and \(\rho\in 3^n\) there is a step to ensure that the map \(\varphi\) we construct satisfies the requirement \[\R_{\nu, \rho}\coloneqq \forall \widehat\rho\succcurlyeq\varphi(\rho), \nu\in f(\widehat\rho)\text{ or }\nu\notin f(\varphi(\rho))\]
    
    Let \(n_0\coloneqq n\) and $\varphi_0 : 3^{n_0} \to 3^{n_0}$ be the identity function. At step \(s<p\), consider \(\nu\in\X_N(1)\) and \(\rho\in 3^n\).
    If \(\forall\widehat\rho\succcurlyeq\varphi_s(\rho), \nu\in f(\widehat\rho)\) we are done by defining \(n_{s+1}\coloneqq n_s\) and \(\varphi_{s+1}\coloneqq\varphi_s\).
    Otherwise there is \(\eta\) extending \(\varphi_s(\rho)\) such that \(\nu\notin f(\eta)\). Define \(n_{s+1}\coloneqq|\eta|\), and use \Cref{ext1} to find a map \(\varphi_{s+1}:3^n\to 3^{n_{s+1}}\) extending \(\varphi_s\), preserving incompatibility over \((\varepsilon, \varepsilon)\), and such that \(\varphi_{s+1}(\rho)=\eta\). So, under the assumption made on $f$ we have that \(\forall\widehat\eta\succcurlyeq\eta, f(\widehat\eta)\subseteq f(\eta)\), and thus $\forall \widehat\eta\succcurlyeq\eta, \nu\notin f(\widehat\eta)$. Then as $\varphi_{s+1}$ extends $\varphi_s$, $\R_{\nu, \rho}$ is fulfilled.
    
    Finally, define \(m\coloneqq n_p\) and \(\varphi\coloneqq\varphi_p\). By the hypothesis made on the function \(f\), for every~$\nu\in\X_N(1)$, if $\nu\in f(\varphi(\rho))$ for some~$\rho \in 3^n$, then \(\forall \widehat\rho\succcurlyeq \varphi(\rho), \nu\in f(\widehat\rho)\)
\end{proof}

The following definition should be understood in the light of the first-jump control for $\Pi^0_1$ classes. When trying to construct an infinite path through an infinite binary tree~$T \subseteq 2^{<\omega}$, one must ensure that at every step, the node is \emph{extensible}, that is, the branch below the node is infinite. Being extensible is a $\Pi^0_1$ property, and therefore to obtain a good first-jump control, one must resort to an overapproximation: Given a set~$A \subseteq 2^{<\omega}$, instead of asking whether there is an extensible node in~$T \cap A$, one will ask whether there is a level in the tree such that every node at this level belongs to~$A$. Among the nodes at that level, at least one must be extendible. If~$A$ is $\Sigma^0_1$, then the former question is $\Sigma^0_2$, while the latter is $\Sigma^0_1$.

One can use a different technique, and ask whether the $\Pi^0_1$ class $\P$ of infinite subtrees of $T$ disjoint from~$A$ is empty. In particular, by considering $S \subseteq T$, the pruned subtree of~$T$ containing only extendible nodes, since~$S \not \in \P$, there is an extendible node in~$S \cap A$. Here again, this overapproximation is $\Sigma^0_1$. 

In the case of cross-constraint problems, a node $(\rho, \sigma)$ is extensible in a cross-tree~$T$ if $T$ is left-full below $(\rho, \sigma)$. The notion of~$T$-sufficiency below is therefore the counterpart of the $\Sigma^0_1$ question above, mutatis mutandis.

\begin{definition}[{Liu~\cite[Definition 4.22]{liu2023coding}}]
    Given a tree \(T\subseteq\X\), and a tuple \((\rho^i, \sigma^i)_{i<2}\in\X_{<\NN}^2\), a set \(A\subseteq\X_{<\NN}^2\) is \emph{$T$-sufficient over \((\rho^i, \sigma^i)_{i<2}\)} if, for every infinite subtree \(S\subseteq T\) for which \((\rho^i, \sigma^i)_{i<2}\) is a condition-tuple, there is a tuple \((\widehat\rho^i, \widehat\sigma^i)_{i<2}\in A\cap S^2\) such that \(\forall i<2, (\widehat\rho^i, \widehat\sigma^i)\succcurlyeq(\rho^i, \sigma^i)\), and \((\widehat\rho^0, \widehat\rho^1)\) is completely incompatible over \((\rho^0, \rho^1)\).
\end{definition}

Note that if~$A$ is $\Sigma^0_1$, then the statement \qt{$A$ is $T$-sufficient over \((\rho^i, \sigma^i)_{i<2}\)} is $\Sigma^0_1(T)$. The combinatorics for the cross-constraint problem are more complicated than the ones for weak K\"onig's lemma, and one cannot simply consider the pruned tree containing only extensible nodes. However, the following lemmas show that one can consider a weakly pruned tree in which every node is almost extensible, in the sense that every node can be extended into a node below which the cross-tree is left-full.

The following lemma uses compactness to give a finite cross-tree version of $T$-sufficiency.
Recall that the notion of left-fullness was extended to finite trees, which induces a notion of condition-tuple.
Given a set $S \subseteq \X_{<\NN}$, we write $\ell_N(S)$ for the set of pairs $(\rho, \sigma) \in S$ such that $|\rho| = |\sigma| = N$.

\begin{lemma}\label{konig-sufficient}
    Let \(T\subseteq\X\) be a cross-tree. If \(A\subseteq\X_{<\NN}^2\) is \(T\)-sufficient over \((\rho^i, \sigma^i)_{i<2}\in\X_{<\NN}^2\), then there is \(N\in\NN\) such that for every finite cross-subtree $S \subseteq T \cap \X_{\leq N}$ for which \((\rho^i, \sigma^i)_{i<2}\) is a condition-tuple, there is a tuple \((\widehat\rho^i, \widehat\sigma^i)_{i<2}\in A\cap S^2\) such that \(\forall i<2, (\widehat\rho^i, \widehat\sigma^i)\succcurlyeq(\rho^i, \sigma^i)\), and \((\widehat\rho^0, \widehat\rho^1)\) is completely incompatible over \((\rho^0, \rho^1)\).
\end{lemma}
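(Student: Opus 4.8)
The statement is a compactness-driven finitary reformulation of $T$-sufficiency, and the natural approach is a König's lemma argument on the space of finite cross-subtrees, exactly parallel to how one passes from ``every infinite subtree of $T$ contains an extensible node in $A$'' to a finite-level statement in the $\Pi^0_1$-class setting. First I would argue by contradiction: suppose that for every $N \in \NN$ there is a finite cross-subtree $S_N \subseteq T \cap \X_{\leq N}$ for which $(\rho^i, \sigma^i)_{i<2}$ is a (finite) condition-tuple, but such that no tuple $(\widehat\rho^i, \widehat\sigma^i)_{i<2} \in A \cap S_N^2$ extends $(\rho^i, \sigma^i)_{i<2}$ componentwise with $(\widehat\rho^0, \widehat\rho^1)$ completely incompatible over $(\rho^0, \rho^1)$. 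The goal is to assemble, out of the $S_N$, a single \emph{infinite} subtree $S \subseteq T$ witnessing the failure of $T$-sufficiency, contradicting the hypothesis.

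The key step is the extraction of the limit tree. Here I would organize the $S_N$ by their restrictions: for each level $M \leq N$, the restriction $S_N \uh_M$ is a finite cross-subtree of $T \uh_M$ (after possibly trimming the dangling nodes at the top so that it is again a legitimate finite cross-tree — this uses that $T$ is a cross-tree so truncations behave well), and there are only finitely many finite cross-subtrees of $T \uh_M$. So by an iterated pigeonhole / König's lemma argument one finds an infinite set of indices $N$ along which the restrictions to each fixed level $M$ stabilize to some finite tree $U_M$, and the $U_M$ cohere ($U_M \uh_{M'} = U_{M'}$ for $M' \leq M$); set $S \coloneqq \bigcup_M U_M$. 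Two things must be checked about $S$: (i) $(\rho^i, \sigma^i)_{i<2}$ is a genuine (infinite) condition-tuple for $S$, i.e.\ $S$ is left-full below each $(\rho^i, \sigma^i)$ — this follows from \Cref{left-full-p01} together with the remark after it that left-fullness of a tree is equivalent to left-fullness of all its finite truncations, since each $U_M$ inherits finite left-fullness below $(\rho^i, \sigma^i)$ from the $S_N$ it is a restriction of; and (ii) $S$ is an infinite subtree of $T$, which is immediate from the construction provided the $S_N$ are genuinely growing, which they are since $(\rho^i, \sigma^i)_{i<2}$ being a finite condition-tuple for $S_N \subseteq \X_{\leq N}$ forces leaves at level $N$.

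Finally, I would derive the contradiction: since $A$ is $T$-sufficient over $(\rho^i, \sigma^i)_{i<2}$ and $S$ is an infinite subtree of $T$ with $(\rho^i, \sigma^i)_{i<2}$ a condition-tuple, there is $(\widehat\rho^i, \widehat\sigma^i)_{i<2} \in A \cap S^2$ extending $(\rho^i, \sigma^i)_{i<2}$ componentwise with $(\widehat\rho^0, \widehat\rho^1)$ completely incompatible over $(\rho^0, \rho^1)$. But this tuple lives at some finite level $M_0$, hence $(\widehat\rho^i, \widehat\sigma^i)_{i<2} \in U_{M_0}^2 = (S_N \uh_{M_0})^2 \subseteq S_N^2$ for every $N$ in our stabilizing index set with $N \geq M_0$ — contradicting the choice of $S_N$. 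The main obstacle I anticipate is purely bookkeeping: making sure that the truncations $S_N \uh_M$ are themselves \emph{finite cross-subtrees for which $(\rho^i, \sigma^i)_{i<2}$ is a finite condition-tuple} (one may need to keep only those nodes with extensions up to level $N$, or re-prune so all leaves sit at the top level $M$, to match the finite-tree definition of left-fullness that requires all leaves at height $N$), and that the finite-tree notion of left-fullness is robust under these truncations — but this is exactly what the lemma preceding \Cref{left-full-p01} was set up to handle, so it should go through cleanly.
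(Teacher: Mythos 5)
Your proposal is correct and takes essentially the same approach as the paper: the paper considers the tree $\T$ of finite cross-subtrees of $T$ (ordered by truncation) that are left-full below both $(\rho^i,\sigma^i)$ and contain no suitable tuple in $A$, observes that an infinite path in $\T$ would yield an infinite subtree contradicting $T$-sufficiency, and concludes by K\"onig's lemma that $\T$ is finite, giving the bound $N$. Your contrapositive phrasing (assume bad $S_N$ at every level, extract the limit tree by iterated pigeonhole, contradict $T$-sufficiency) is the same argument, and your bookkeeping concerns (truncations remain finite left-full cross-subtrees with leaves at the top level, avoidance of $A$ is inherited by truncations and passes to the limit) are exactly the routine checks the paper's construction implicitly relies on.
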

\begin{proof}
    Consider the class \(\T\) of finite subtrees \(S\subseteq T\) whose leaves are all of the same length, such that \(S\) is left-full below \((\rho^i, \sigma^i)\) for both \(i<2\), and such that for all tuple \((\widehat\rho^i, \widehat\sigma^i)_{i<2}\in S^2\) which extends \((\rho^i, \sigma^i)\) and such that \((\widehat\rho^0, \widehat\rho^1)\) is completely incompatible over \((\rho^0, \rho^1)\), then \((\widehat\rho^0, \widehat\rho^1)\notin A\). 
    It forms a tree for the relation where \(S_0\leqslant S_1\) if and only if \(S_1\uh_{h(S_0)}=S_0\). 
    
    There is no infinite path in \(\T\), otherwise it would contradict the assumption that \(A\) is \(T\)-sufficient. Hence \(\T\) is finite thanks to König's lemma. In other words, there is \(N\in\NN\) such that, for any finite subtree \(S\subseteq T \cap \X_{\leq N}\) left-full below both \((\rho^i, \sigma^i)\), there is \((\widehat\rho^i, \widehat\sigma^i)\in A\cap S^2\) witnessing the \(T\)-sufficiency of~\(A\).
\end{proof}

The following lemma is the desired combinatorial lemma. 

\begin{lemma}[{Liu~\cite[Subclaim 4.7]{liu2023coding}}]\label{sufficient-imply-extendible}
    Let \(T\subseteq\X\) be a cross-tree. If \(A\subseteq\X_{<\NN}^2\) is \(T\)-sufficient over \((\rho^i, \sigma^i)_{i<2}\in\X_{<\NN}^2\) and closed under the suffix relation, then there is a condition-tuple in \(A\) which extends \((\rho^i, \sigma^i)_{i<2}\).

\end{lemma}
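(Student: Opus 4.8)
The goal is to extract from a $T$-sufficient, suffix-closed set $A$ an actual condition-tuple living in $A$. The plan is to first pass to the finite-level reformulation given by \Cref{konig-sufficient}: fix the integer $N$ it provides, so that for \emph{every} finite cross-subtree $S\subseteq T\cap\X_{\leq N}$ for which $(\rho^i,\sigma^i)_{i<2}$ is a condition-tuple, there is a witnessing tuple $(\widehat\rho^i,\widehat\sigma^i)_{i<2}\in A\cap S^2$ with $(\widehat\rho^i,\widehat\sigma^i)\succcurlyeq(\rho^i,\sigma^i)$ and $(\widehat\rho^0,\widehat\rho^1)$ completely incompatible over $(\rho^0,\rho^1)$. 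The subtlety is that such a witness need only have $A$ \emph{contain the pair}, not be left-full below it, so it is not yet a condition-tuple; the suffix-closure of $A$ and the combinatorial lemmas \Cref{ext1,ext2} are what will be used to upgrade it.

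First I would apply \Cref{ext2} to the non-increasing function $f$ on $3^{<\NN}$ that records, level by level up to height $N$, which pairs $(\mu,\tau)\in\X_N(1)$ keep the finite tree left-full — more precisely, $f(\rho)$ should be chosen so that membership $\nu\in f(\rho)$ stably tracks whether the relevant finite cross-subtree restricted above $\rho$ (in the first component) remains left-full below the candidate stems. Applying \Cref{ext2} yields an incompatibility-preserving map $\varphi:3^n\to 3^m$ after which, for each $\rho$ in the image, the left-fullness status of every $\nu\in\X_N(1)$ has stabilized: either $\nu$ stays available below \emph{all} extensions of $\varphi(\rho)$, or $\nu$ is already excluded at $\varphi(\rho)$. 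This is the standard ``freeze the $\Pi^0_1$ behaviour along one branch'' trick, here carried out simultaneously across all $3^n$ nodes while preserving complete incompatibility, so that distinct first-component stems remain completely incompatible after the pushforward.

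With that stabilization in hand, I would build the finite cross-subtree $S\subseteq T\cap\X_{\leq N}$ whose branches in the first component run through the images $\varphi(\rho)$ (extended to length $N$ using \Cref{ext1} in the spirit of the $\eta\mapsto\eta\cdot a^{m-n}$ construction to stay incompatibility-preserving), keeping only those right-components that survived the stabilization. Because $\varphi$ preserves incompatibility and left-fullness was preserved along the surviving components, $(\rho^i,\sigma^i)_{i<2}$ is still a condition-tuple for $S$. Now invoke the choice of $N$: there is $(\widehat\rho^i,\widehat\sigma^i)_{i<2}\in A\cap S^2$ extending $(\rho^i,\sigma^i)_{i<2}$ with $(\widehat\rho^0,\widehat\rho^1)$ completely incompatible over $(\rho^0,\rho^1)$. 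The point is that, by our construction of $S$ via the stabilized $f$, being in $S$ at level $N$ forces left-fullness of $T$ below $(\widehat\rho^i,\widehat\sigma^i)$: every $\nu$ that reached level $N$ inside $S$ is in the ``stays available forever'' case, which by \Cref{left-full-p01} is exactly left-fullness of $[T]$ below that node. Hence $(\widehat\rho^i,\widehat\sigma^i)_{i<2}$ is a genuine condition-tuple, and it lies in $A$ and extends $(\rho^i,\sigma^i)_{i<2}$, as required. (Suffix-closure of $A$ is what lets us, if needed, trim the witness back to a canonical length without leaving $A$.)

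The main obstacle I anticipate is getting the function $f$ fed into \Cref{ext2} exactly right: it must be defined on $3^{<\NN}$ (first components only, since $\varphi$ acts there), it must be non-increasing, and its stabilized behaviour must genuinely encode left-fullness of the full cross-tree $T$ below the corresponding node — which mixes the first and second components through the cross-tree structure and the right-pruning. Setting $f(\rho) = \{\nu\in\X_N(1) : T\text{ is left-full below }(\rho',\nu)\text{ for }\rho'\text{ the length-}|\rho|\text{ truncation, suitably interpreted}\}$ and checking monotonicity against \Cref{left-full-ext}(a) is the delicate bookkeeping step; once that is pinned down, combining \Cref{konig-sufficient}, \Cref{ext1}, \Cref{ext2}, and \Cref{left-full-p01} is essentially forced.
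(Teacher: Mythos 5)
The proposal is in the right neighborhood — fixing $N$ from \Cref{konig-sufficient}, applying \Cref{ext2} to stabilize the $\Pi^0_1$ behaviour along a pushed-forward stem, constructing a finite subtree $S\subseteq T\cap\X_{\leq N}$ and then invoking the finite form of $T$-sufficiency. But two of the steps you flag as "delicate bookkeeping" are in fact where the argument breaks.

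The first is the function you feed into \Cref{ext2}. You propose $f(\rho)=\{\nu: T\text{ is left-full below }(\rho',\nu)\}$. This has the \emph{wrong monotonicity}: by \Cref{left-full-ext}(a), left-fullness below a node is preserved when you \emph{lengthen} the first component, so this $f$ is non-decreasing in $\rho$, while \Cref{ext2} requires a non-increasing $f$. No "suitable interpretation" of the truncation fixes this, because it is the predicate itself that goes the wrong way. The paper instead sets $f_i(\tau)=\{\nu : (\rho^i\cdot\tau,\sigma^i\cdot\nu)\in T\}$ — bare \emph{membership} in $T$, which is non-increasing precisely because $T$ is downward-closed. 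Left-fullness is not encoded in $f$; it \emph{emerges} from the ext2 stabilization together with \Cref{left-full-p01}: once $\nu\in f(\widehat\rho)$ for every extension $\widehat\rho$ of $\psi(\tau)$, that universally-quantified membership statement \emph{is} left-fullness below $(\rho^i\cdot\psi(\tau),\sigma^i\cdot\nu)$.

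The second gap is your claim that the witness $(\widehat\rho^i,\widehat\sigma^i)\in A\cap S^2$ found at level $N$ is itself a condition-tuple, i.e.\ that "being in $S$ at level $N$ forces left-fullness of $T$ below $(\widehat\rho^i,\widehat\sigma^i)$." What the construction of $S$ (as the downward closure of $B_i=\{(\rho^i\cdot\tau,\sigma^i\cdot\nu)\in T : (\rho^i\cdot\psi(\tau),\sigma^i\cdot\nu)\in T\}$) actually gives is left-fullness below the \emph{pushed-forward} node $(\rho^i\cdot\psi(\tau^i),\sigma^i\cdot\nu^i)$, where $\widehat\rho^i=\rho^i\cdot\tau^i$ and $\widehat\sigma^i=\sigma^i\cdot\nu^i$. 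These are different nodes, and left-fullness does not transfer down from $\rho^i\cdot\psi(\tau^i)$ to its prefix $\widehat\rho^i$. The final condition-tuple is therefore $(\rho^i\cdot\psi(\tau^i),\widehat\sigma^i)_{i<2}$, and it is exactly here that the suffix-closure hypothesis on $A$ is spent: since $(\widehat\rho^i,\widehat\sigma^i)\in A$ and $\rho^i\cdot\psi(\tau^i)\succcurlyeq\widehat\rho^i$, the longer tuple is still in $A$. Your parenthetical that suffix-closure "lets us trim the witness back to a canonical length" has the direction reversed; it is used to \emph{lengthen} the first component along $\psi$, not to shorten anything. Relatedly, the branches of $S$ in the first component are the short $\rho^i\cdot\tau$ with $|\rho^i\cdot\tau|\le N$, not the images $\rho^i\cdot\psi(\tau)$ (which generally overshoot $N$); $\psi$ appears only inside the defining predicate for $B_i$, so the invocation of \Cref{ext1} to "extend the branches to length $N$" is not needed and does not match the structure of $S$.
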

\begin{proof}
Let $N \in \NN$ witness \Cref{konig-sufficient}.
For \(i<2\), define the maps
    \begin{align*}
        f_i:3^{<\NN}&\to \mathfrak{P}\big(\X_{N-|\sigma^i|}(1)\big)\\
        \tau&\mapsto \left\{\nu: (\rho^i\cdot\tau, \sigma^i\cdot\nu)\in T\right\}
    \end{align*}
    And consider the map \(f:\tau\mapsto f_0(\tau)\cup f_1(\tau)\).
    
    Since \(T\) is left-full below \((\rho^i, \sigma^i)\) for both \(i<2\), then by \Cref{left-full-ext}, \(\forall\tau, f(\tau)\neq\emptyset\). Moreover, each \(f_i\) is non-decreasing since \(T\) is a cross-tree, so \(f\) is also non-decreasing.
    Thus we can apply \Cref{ext2} on \(f\) to obtain \(L\in\NN\) and a map \(\psi:3^{N-|\rho^0|}\to 3^L\) preserving incompatibility over \((\rho^0, \rho^1)\) such that, for any \(\tau\in \X_{N-|\rho^0|}(0)\) and any \(\nu\in\X_{N-|\sigma^i|}(1)\) either \(T\) is left-full below \((\rho^i\cdot\psi(\tau), \sigma^i\cdot\nu)\) or \((\rho^i\cdot\psi(\tau), \sigma^i\cdot\nu)\notin T\). In other word, \(\psi\) is such that
    \begin{align}
        \big(\rho^i\cdot\psi(\tau), \sigma^i\cdot\nu\big)\in T\implies
        T\text{ is left-full below }\big(\rho^i\cdot\psi(\tau), \sigma^i\cdot\nu\big)\label{eq:x}
    \end{align}
    
    Now for each \(i<2\), define the set
    \[B_i\coloneqq\big\{(\rho^i\cdot\tau, \sigma^i\cdot\nu)\in T: (\tau, \nu)\in \X\text{ and }(\rho^i\cdot\psi(\tau), \sigma^i\cdot\nu)\in T\big\}\]
    And consider \(S \subseteq T\), the downward-closure of $B_0 \cup B_1$. 
    We claim that \(S\) is a cross-subtree of \(T\) for which \((\rho^i, \sigma^i)_{i<2}\) is a condition-tuple. 

    Indeed, fix some~$i  <2$ and let $\tau \in 3^{N - |\rho^i|}$.
    Since $T$ is left-full below $(\rho^i, \sigma^i)$, by \Cref{left-full-ext}, there is some~$\nu \in \X_{N-|\sigma^i|}(1)$ such that $(\rho^i \cdot \psi(\tau), \sigma^i \cdot \nu) \in T$.
    Thus, by \ref{eq:x}, $T$ is left-full below $(\rho^i \cdot \psi(\tau), \sigma^i \cdot \nu)$
    and by definition of~$B_i$, $(\rho^i \cdot \tau, \sigma^i \cdot \nu) \in B_i \subseteq S$.
    Thus $S$ is left-full below~$(\rho^i, \sigma^i)$.

    By \Cref{konig-sufficient}, there is a tuple $(\widehat{\rho}^i, \widehat{\sigma}^i)_{i < 2} \in A \cap S^2$
    such that \(\forall i<2, (\widehat\rho^i, \widehat\sigma^i)\succcurlyeq(\rho^i, \sigma^i)\), and \((\widehat\rho^0, \widehat\rho^1)\) is completely incompatible over \((\rho^0, \rho^1)\).
    Since $A$ is closed under suffix, we can suppose without loss of generality that $|\widehat\rho^i| = |\widehat\sigma^i| = N$, hence that $(\widehat\rho^i, \widehat\sigma^i) \in B_i$. Let \(\tau^i\in \X_{N-|\rho^i|}(0)\) and \(\nu^i\in\X_{N-|\sigma^i|}(1)\) be such that
    $\widehat\rho^i = \rho^i \cdot \tau^i$ and $\widehat\sigma^i = \sigma^i \cdot \nu^i$.
    By definition of $B_i$, $(\rho^i\cdot\psi(\tau^i), \sigma^i\cdot\nu^i)\in T$,
    thus by \Cref{eq:x}, $T$ is left-full below $(\rho^i\cdot\psi(\tau^i), \sigma^i\cdot\nu^i)$.
    Thus, $(\rho^i\cdot\psi(\tau^i), \sigma^i\cdot\nu^i)_{i < 2}$ is a condition-tuple for~$[T]$.
    Moreover, since $\psi$ preserves incompatibility over \((\rho^0, \rho^1)\), $(\rho^i\cdot\psi(\tau^i), \sigma^i\cdot\nu^i)_{i < 2}$ is an extension of \((\rho^i, \sigma^i)_{i<2}\).
\end{proof}

\subsection{Cross-constraint cone avoidance basis theorem}

We now prove our first cross-constraint basis theorem which requires some sort of first-jump control, using the combinatorics developed in \Cref{sect:combi-lemmas}. This basis theorem was first proven by Liu~\cite[Lemma 4.5]{liu2023coding} using a different argument. Our new proof follows more closely the standard proof of the cone avoidance basis theorem for $\Pi^0_1$ classes.

\begin{theorem}[Cross-constraint cone avoidance, {Liu~\cite[Lemma 4.5]{liu2023coding}}]\label{cone-avoidance-basis}
    Let \(C\) be a non-computable set.
    Any left-full computable instance \(T\) of \(\CC\) has a solution \((X^i, Y^i)_{i<2}\) such that \((X^0, Y^0)\oplus(X^1, Y^1)\not\geqslant_T C\).
\end{theorem}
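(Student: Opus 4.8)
The plan is to follow the classical cone avoidance argument for $\Pi^0_1$ classes, adapted to cross-trees using the combinatorial machinery of \Cref{sect:combi-lemmas}. I would build a decreasing sequence of condition-tuples $(\rho^i_0, \sigma^i_0)_{i<2} \geq (\rho^i_1, \sigma^i_1)_{i<2} \geq \dots$ all of which are condition-tuples for $[T]$ (so $[T]$ remains left-full below each $(\rho^i_t, \sigma^i_t)$), and then set $X^i \coloneqq \bigcup_t \rho^i_t$ and $Y^i \coloneqq \bigcup_t \sigma^i_t$. The construction must interleave three kinds of requirements: (i) \emph{length requirements} ensuring $|\rho^i_t| \to \infty$ and $|\sigma^i_{t,s}| \to \infty$ so that the $X^i$ and $Y^i_s$ are genuinely total/infinite — these are handled by \Cref{left-full-ext}, which lets us extend the stems to any desired length while preserving left-fullness; (ii) \emph{structural requirements} ensuring that $(X^0, X^1)$ is finitely compatible and each $(Y^0_s, Y^1_s)$ is infinitely compatible, handled exactly as in the proof of \Cref{delta-basis} via \Cref{lem:delta-basis-excludes} (restart the construction symmetrically whenever a component is excluded, which can only happen finitely often since $r$ is standard); and (iii) the new \emph{cone-avoidance requirements} $\R_e \colon \Phi_e^{(X^0, Y^0)\oplus(X^1, Y^1)} \neq C$.

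The heart of the argument is satisfying $\R_e$. Given the current condition-tuple $(\rho^i, \sigma^i)_{i<2}$, I would consider the set
\[
A_{e,n} \coloneqq \big\{ (\widehat\rho^i, \widehat\sigma^i)_{i<2} : \exists x \ \Phi_e^{(\widehat\rho^0,\widehat\sigma^0)\oplus(\widehat\rho^1,\widehat\sigma^1)}(x){\downarrow} \neq C(x) \big\},
\]
i.e.\ the tuples whose finite data already force a disagreement with $C$ at some point; $A_{e,n}$ is $\Sigma^0_1(T \oplus C)$ and closed under the suffix relation. There are two cases. If for the current node $A_e$ (suitably formalized as a set of tuples) is $T$-sufficient over $(\rho^i,\sigma^i)_{i<2}$, then by \Cref{sufficient-imply-extendible} there is a condition-tuple in $A_e$ extending the current one; passing to it forces $\Phi_e$ to disagree with $C$, satisfying $\R_e$. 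If $A_e$ is \emph{not} $T$-sufficient, then there is an infinite subtree $S \subseteq T$, below which the current tuple is still a condition-tuple, such that no admissible completely-incompatible extension lands in $A_e$; we keep working inside $S$ (this is the subtree-shrinking move, analogous to passing to a subtree in the $\Pi^0_1$ cone-avoidance proof). The key point is then the standard one: if $\R_e$ is never explicitly forced, then for every sufficiently generic pair of paths through the final nested subtree, $\Phi_e^{(X^0,Y^0)\oplus(X^1,Y^1)}$, \emph{wherever it is total}, computes a set that is an initial-segment-wise "majority vote" definable from $S$ alone (without $C$), hence computable; since $C$ is non-computable, $\Phi_e^{(X^0,Y^0)\oplus(X^1,Y^1)} \neq C$ regardless. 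One must check the bookkeeping that only finitely much of $C$ is needed at each step and that $T$-sufficiency is decidable from $(T\oplus C)'$ so the whole construction runs below $(T \oplus C)'$ — though for the mere statement only the existence of the solution is claimed, so the oracle count can be kept implicit.

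The main obstacle, as usual in cross-constraint arguments, is the tension between the structural requirement that $(X^0, X^1)$ be \emph{completely incompatible} along the construction (forced by the definition of extension for condition-tuples) and the cone-avoidance requirement, which wants to freely pass to a $T$-sufficient subset $A_e$. Concretely: when $A_e$ is $T$-sufficient, \Cref{sufficient-imply-extendible} delivers an extension that is simultaneously in $A_e$ \emph{and} completely incompatible over $(\rho^0, \rho^1)$ — this is exactly what the combinatorial lemmas were built to guarantee, so here the obstacle is pre-packaged away. The genuinely delicate part is the negative case: one must argue that non-$T$-sufficiency of $A_e$ really does pin down a $C$-free computation of any total output of $\Phi_e$. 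This requires showing that the "majority" behaviour across the $3$-valued branching (for the $X$-components) and $2$-valued branching (for the $Y$-components) is consistent enough that the diagonal output, if total, is forced to a value determined by the subtree — the place where the asymmetry between the $3^\NN$ factor and the $(2^\NN)^r$ factor (exploited in \Cref{lem:delta-basis-excludes} and in \Cref{ext1}) does the work. I expect the proof to mirror Liu's but to be streamlined by routing everything through \Cref{sufficient-imply-extendible}, exactly as the paragraph preceding the theorem advertises.
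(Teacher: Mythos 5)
Your high-level plan is the right one, and you correctly identify the three kinds of requirements and the role of \Cref{sufficient-imply-extendible} and \Cref{lem:delta-basis-excludes}. But the way you set up the cone-avoidance requirement has a genuine gap: you bake $C$ directly into the $\Sigma^0_1$ set
\[
A_e \coloneqq \big\{ (\widehat\rho^i, \widehat\sigma^i)_{i<2} : \exists x\; \Phi_e^{(\widehat\rho^0,\widehat\sigma^0)\oplus(\widehat\rho^1,\widehat\sigma^1)}(x){\downarrow} \neq C(x) \big\},
\]
which is $\Sigma^0_1(C)$, not $\Sigma^0_1$. This destroys the whole complexity bookkeeping. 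In your negative case, the class of subtrees $S$ witnessing that $A_e$ is not $T$-sufficient becomes $\Pi^0_1(B\oplus C)$, and you cannot apply the cone avoidance basis theorem for $\Pi^0_1$ classes to extract an $S$ with $S\oplus B\not\geq_T C$. (Cone avoidance lets you avoid $C$ in a $\Pi^0_1(B)$ class when $B\not\geq_T C$; it cannot help if $C$ is already in the oracle defining the class.) Your proposed fallback — a ``majority vote'' argument that the total output is computable from $S$ alone — is not substantiated: you never explain how the output of $\Phi_e$ gets pinned down without reference to $C$, and in fact nothing forces the output to be computable.

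The paper avoids this by \emph{not} building $C$ into the enumeration. It instead uses the family $A_{x,v}$ of condition-tuples where $\Phi_e(x){\downarrow}=v$, which is $\Sigma^0_1$ uniformly in $(x,v)$, and considers the $\Sigma^0_1(B)$ set $Q = \{(x,v) : A_{x,v}\text{ is }U\text{-sufficient}\}$, where $B$ is the oracle carried by the condition (with $B\not\geq_T C$). The trichotomy is then: if $(x,1{-}C(x))\in Q$ for some $x$, \Cref{sufficient-imply-extendible} gives an extension forcing a disagreement; if $(x,C(x))\notin Q$ for some $x$, the class $\L$ of subtrees witnessing non-sufficiency of $A_{x,C(x)}$ is $\Pi^0_1(B)$ — with no $C$ in the oracle — so the $\Pi^0_1$ cone avoidance basis theorem yields $S\in\L$ with $S\oplus B\not\geq_T C$, and you update the condition's subtree and oracle to $(S, S\oplus B)$; and if neither case holds, $Q$ is the $\Sigma^0_1(B)$ graph of $C$, contradicting $B\not\geq_T C$. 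This third case is the real computability-theoretic content, and your write-up has nothing corresponding to it. Also note that the conditions must carry the subtree $U$ and oracle $B$ as part of their data, not merely a condition-tuple — without tracking $B\not\geq_T C$ across steps, the nested-subtree move is not sound.
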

\begin{proof}
    To prove the theorem, we use forcing with conditions of the form \(\left((\rho^i, \sigma^i)_{i<2}, U, B\right)\), where
    \begin{itemize}
        \item \(U\) is a \(B\)-computable cross-subtree of~$T$
        \item \((\rho^i, \sigma^i)_{i<2}\) is a condition-tuple for \([U]\)
        \item \(B\subseteq\NN\) and \(B\not\geqslant_T C\)
    \end{itemize}
    
    We will satisfy the following requirements for each $e \in \NN$:
    \[\R_e : \Phi_e^{(X^0, Y^0)\oplus(X^1, Y^1)}\neq C\]

    A condition \(\left((\rho^i, \sigma^i)_{i<2}, U, B\right)\) \emph{forces} \(\R_\Psi\), if \(\R_\Psi\) holds for all \((X^i, Y^i)\in [U]\) extending \((\rho^i, \sigma^i)\) for each \(i<2\).

    \begin{lemma}\label[lemma]{cone-avoidance-basis-forcing-req}
    For every condition \(c \coloneqq \left((\rho^i, \sigma^i)_{i<2}, U, B\right)\) and every~$e \in \NN$, there is an extension of $c$ that forces \(\R_e\).
    \end{lemma}
    \begin{proof}
    For all \(x \in \NN, v < 2\), define 
    \[A_{x,v}\coloneqq\{(\widehat\rho^i, \widehat\sigma^i)_{i<2}\in\X_{<\NN}^2:\Phi_e^{(\widehat\rho^0, \widehat\sigma^0)\oplus (\widehat\rho^1, \widehat\sigma^1)}(x)\downarrow = v\}\]
    The set $A_{x,v}$ is upward-closed, and~\(\Sigma^0_1\) uniformly in \(x, v\). Consider the following $\Sigma^0_1(B)$ set:
    $$
    Q = \{ (x, v) : A_{x,v} \mbox{ is } U\mbox{-sufficient over } (\widehat\rho^i, \widehat\sigma^i)_{i<2} \}
    $$

    \emph{Case 1.} $(x, C(x)) \not\in Q$ for some~$x \in \NN$. Let \(\L\) be the $\Pi^0_1(B)$ class of cross-trees \(S\) witnessing that \(A_{x, C(x)}\) is not \(U\)-sufficient over \((\rho^i, \sigma^i)_{i<2}\). By the cone avoidance basis theorem, there is some cross-tree \(S\) such that \(S \oplus B\not\geqslant_T C\).
    The condition \(d\coloneqq\big((\rho^i, \sigma^i)_{i<2}, S, S\oplus B\big)\) is the extension we are looking for. Indeed, it forces \(\R_e\), because for \((X^i, Y^i)_{i<2}\in[d]\), if \(\Psi^{(X^0, Y^0)\oplus(X^1, Y^1)}\) is total, then it is different from~$C$ on input~$x$.
    \bigskip
    
    \emph{Case 2.} $(x, 1-C(x)) \in Q$ for some~$x \in \NN$. Unfolding the definition, $A_{x, 1-C(x)}$ is $U$-sufficient \((\rho^i, \sigma^i)_{i<2}\), so by \Cref{sufficient-imply-extendible}, there is a condition-tuple  \((\widehat\rho^i, \widehat\sigma^i)_{i<2}\in A_{x, 1-C(x)}\) extending \((\rho^i, \sigma^i)_{i<2}\). Thus the
    condition \(\big((\widehat\rho^i, \widehat\sigma^i)_{i<2}, U, B\big)\) is the desired extension, as it forces \(\R_e\).
    \bigskip

    \emph{Case 3.} Neither Case 1 nor Case 2 holds. Then $Q$ is the $\Sigma^0_1(B)$ graph of the characteristic function of~$C$, so $C \leq_T B$. Contradiction. 

    
    
    \end{proof}

    We are now ready to prove \Cref{cone-avoidance-basis}. Let $c_0 = \left((\rho^i, \sigma^i)_{i<2}, U, B\right)$ be a condition that excludes a maximal number of components and let $\F$ be a sufficiently generic filter for this notion of forcing, containing~$c_0$. For~$i < 2$, let $X^i = \bigcup \{ \rho^i : \left((\rho^i, \sigma^i)_{i<2}, U, B\right) \in \F \}$ and $Y^i = \bigcup \{ \sigma^i : \left((\rho^i, \sigma^i)_{i<2}, U, B\right) \in \F \}$. By \Cref{left-full-ext}, $X^i$ and $Y^i$ are both infinite sequences. By \Cref{lem:delta-basis-excludes}, for every~$s < r$, $Y^0_s \cap Y^1_s$ is infinite. By \Cref{cone-avoidance-basis-forcing-req}, \((X^0, Y^0)\oplus(X^1, Y^1)\not\geqslant_T C\). This completes the proof of \Cref{cone-avoidance-basis}.
\end{proof}

\begin{corollary}
    For any non-computable set \(C\subseteq\NN\), there is a cross-constraint ideal that does not contain \(C\).
\end{corollary}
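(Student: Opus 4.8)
The plan is to derive the corollary from \Cref{cone-avoidance-basis} by the standard iteration of a basis theorem, exactly as one builds an $\omega$-model of $\RCA_0 + \WKL_0$ avoiding a cone out of the cone avoidance basis theorem for $\Pi^0_1$ classes. The first step is to record the relativized form of \Cref{cone-avoidance-basis}: for every set~$B$ with $C \not\leqslant_T B$ and every left-full $B$-computable instance~$T$ of~$\CC$, there is a solution $(X^i, Y^i)_{i<2}$ of~$T$ with $(X^0, Y^0) \oplus (X^1, Y^1) \oplus B \not\geqslant_T C$. This is immediate from the proof of \Cref{cone-avoidance-basis}, since the forcing there only manipulates $B$-computable objects and its single external ingredient, the cone avoidance basis theorem for $\Pi^0_1$ classes, relativizes.

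Next, I would construct a $\leqslant_T$-increasing sequence $\emptyset = Z_0 \leqslant_T Z_1 \leqslant_T \cdots$ maintaining the invariant $C \not\leqslant_T Z_s$, using a bookkeeping over all pairs $(s,e) \in \NN^2$ that handles $(s,e)$ only at a stage where $Z_s$ has already been defined (e.g.\ at stage $\langle s, e\rangle$ for a pairing function with $\langle s, e\rangle \geqslant s$). When handling $(s,e)$ at stage~$t$: if $\Phi_e^{Z_s}$ is total and is the characteristic function of a left-full cross-tree~$T$, then $T$ is $Z_s$-computable, hence $Z_t$-computable, and since $C \not\leqslant_T Z_t$ the relativized theorem provides a solution $(X^i, Y^i)_{i<2}$ of~$T$ with $(X^0, Y^0) \oplus (X^1, Y^1) \oplus Z_t \not\geqslant_T C$; set $Z_{t+1} \coloneqq Z_t \oplus (X^0, Y^0) \oplus (X^1, Y^1)$. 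Otherwise set $Z_{t+1} \coloneqq Z_t$. In either case the invariant persists.

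Finally, I would let $\M \coloneqq \{ A \subseteq \NN : \exists s\ A \leqslant_T Z_s \}$ and verify the three required properties. Since the $Z_s$ are $\leqslant_T$-increasing, $\M$ is closed under $\oplus$ and under $\leqslant_T$, and it contains every computable set, so it is a Turing ideal; the invariant gives $C \notin \M$; and $\M$ is a cross-constraint ideal because any instance $T \in \M$ of~$\CC$ equals $\Phi_e^{Z_s}$ for some~$s$ and~$e$, so the stage handling $(s,e)$ adds a solution of~$T$ to some~$Z_{t+1}$, and hence to~$\M$. The whole argument is routine; the only points deserving a word of care are the relativization of \Cref{cone-avoidance-basis} and the observation that the bookkeeping really does meet every $\CC$-instance of~$\M$, which holds because each member of~$\M$ is Turing-below some~$Z_s$ and therefore of the form $\Phi_e^{Z_s}$. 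I do not foresee a genuine obstacle.
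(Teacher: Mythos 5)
Your proposal is correct and follows essentially the same route as the paper: relativize \Cref{cone-avoidance-basis}, iterate it over a bookkeeping of all pairs $(s,e)$ to build an increasing Turing sequence $Z_0 \leqslant_T Z_1 \leqslant_T \cdots$ maintaining $C \not\leqslant_T Z_s$, and take $\M$ to be the downward closure. No meaningful difference from the paper's argument.
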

\begin{proof}
    We construct a sequence of sets \(Z_0\leqslant_T Z_1\leqslant_T\ldots\) such that for any integer \(n=\langle k, e\rangle\), \(Z_n \not \geq_T C\), and if \(\Phi_e^{Z_k}\) is an instance of \(\CC\), then \(Z_{n+1}\) computes a solution.

    Define \(Z_0\coloneqq\emptyset\). Suppose \(Z_n\) has been defined and let $n = \langle k, e \rangle$. If \(\Phi_e^{Z_k}\) is not a left-full cross-tree, then \(Z_{n+1}\coloneqq Z_n\). Otherwise, by \Cref{cone-avoidance-basis} relativized to \(Z_n\), there is a pair of paths \(P_0\) and \(P_1\), such that \(P_0\oplus P_1\oplus Z_n\not\geqslant_T C\). In which case \(Z_{n+1}\coloneqq P_0\oplus P_1\oplus Z_n\).

    By construction, the class \(\M\coloneqq\{X\in 2^\NN:\exists n, X\leqslant_T Z_n\}\) is a cross-constraint ideal containing only sets avoiding the cone above \(C\), in particular \(C\) is not in the ideal.
\end{proof}

\subsection{\texorpdfstring{Cross-constraint preservation of non-$\Sigma^0_1$ definitions}{Cross-constraint preservation of non-Sigma01 definitions}}

We now prove a second cross-constraint basis theorem, about preservation of non-$\Sigma^0_1$ definitions.
This basis theorem for $\Pi^0_1$ classes was first proven by Wang~\cite{wang2016definability}, and implies the cone avoidance basis theorem in a straightforward way.
Later, Downey et al.~\cite{downey2022relationships} actually proved that the two basis theorems are equivalent, as any problem satisfying any of them, satisfies both. Thus, the following theorem is a (non-trivial) consequence of \Cref{cone-avoidance-basis}. However, we give a direct proof of it, to get familiar with the combinatorics of the cross-constraint problem.

\begin{theorem}[Cross-constraint preservation of non-$\Sigma^0_1$ definitions]\label{sigma01-basis}
    Let \(C\) be a non-$\Sigma^0_1$ set.
    Any computable instance \(T\) of \(\CC\), has a solution \((X^i, Y^i)_{i<2}\) such that \(C\) is not \(\Sigma^0_1\) relative to \((X^0, Y^0)\oplus(X^1, Y^1)\).
\end{theorem}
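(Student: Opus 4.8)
The plan is to mimic the proof of \Cref{cone-avoidance-basis}, replacing cone avoidance by preservation of non-$\Sigma^0_1$ definitions at the level of $\Pi^0_1$ classes. We force with the same conditions $\left((\rho^i, \sigma^i)_{i<2}, U, B\right)$, where $U$ is a $B$-computable cross-subtree of~$T$, $(\rho^i,\sigma^i)_{i<2}$ is a condition-tuple for~$[U]$, and now $B$ is such that $C$ is not $\Sigma^0_1$ relative to~$B$. The structural part (being a solution to~$\CC$) is handled exactly as before: by \Cref{left-full-ext} the generic sequences $X^i$, $Y^i$ are infinite, and by \Cref{lem:delta-basis-excludes} (used via the restart mechanism, exactly as in \Cref{cone-avoidance-basis}) each $Y^0_s \cap Y^1_s$ is infinite for $s < r$. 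So everything reduces to a density lemma.

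The requirements are, for each $e \in \NN$,
\[
\R_e : W_e^{(X^0, Y^0)\oplus(X^1, Y^1)} \neq C,
\]
where $(W_e^Z)_{e}$ enumerates the $\Sigma^0_1(Z)$ sets. The key density lemma states: for every condition $c = \left((\rho^i, \sigma^i)_{i<2}, U, B\right)$ and every~$e$, there is an extension forcing~$\R_e$. For its proof, for each $x \in \NN$ define
\[
A_{x}\coloneqq\{(\widehat\rho^i, \widehat\sigma^i)_{i<2}\in\X_{<\NN}^2 : x \in W_e^{(\widehat\rho^0, \widehat\sigma^0)\oplus (\widehat\rho^1, \widehat\sigma^1)}\},
\]
which is upward-closed and $\Sigma^0_1$ uniformly in~$x$ (here one uses that membership in $W_e^Z$ depends only on finitely much of~$Z$, so it makes sense to plug in finite approximations). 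Then consider the $\Sigma^0_1(B)$ set
\[
Q = \{ x : A_{x} \text{ is } U\text{-sufficient over } (\rho^i, \sigma^i)_{i<2} \},
\]
using the observation from the excerpt that $T$-sufficiency of a $\Sigma^0_1$ set is a $\Sigma^0_1(T)$ statement. Now split into cases. \emph{Case 1}: $Q \not\subseteq C$, say $x \in Q \setminus C$. Since $A_x$ is $U$-sufficient and closed under suffix (it is upward-closed, so in particular suffix-closed), by \Cref{sufficient-imply-extendible} there is a condition-tuple $(\widehat\rho^i, \widehat\sigma^i)_{i<2} \in A_x$ extending $(\rho^i, \sigma^i)_{i<2}$; the extension $\big((\widehat\rho^i, \widehat\sigma^i)_{i<2}, U, B\big)$ forces $x \in W_e^{(X^0,Y^0)\oplus(X^1,Y^1)}$ while $x \notin C$, so it forces~$\R_e$. \emph{Case 2}: there is $x \in C \setminus Q$. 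Let $\L$ be the $\Pi^0_1(B)$ class of cross-trees witnessing that $A_x$ is not $U$-sufficient over $(\rho^i,\sigma^i)_{i<2}$; by preservation of non-$\Sigma^0_1$ definitions for $\Pi^0_1$ classes (Wang~\cite{wang2016definability}), relativized to~$B$, there is $S \in \L$ with $C$ not $\Sigma^0_1$ relative to $S \oplus B$. The condition $\big((\rho^i,\sigma^i)_{i<2}, S, S \oplus B\big)$ forces $x \notin W_e^{(X^0,Y^0)\oplus(X^1,Y^1)}$ while $x \in C$, hence forces~$\R_e$. \emph{Case 3}: neither case applies, i.e.\ $Q = C$; then $C$ is $\Sigma^0_1$ relative to~$B$, contradicting the choice of~$B$.

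I expect the main obstacle to be the bookkeeping in Case 2: one must be careful that the witnessing class $\L$ is genuinely $\Pi^0_1(B)$ (so that the relativized preservation-of-non-$\Sigma^0_1$-definitions basis theorem applies to produce $S$ with the $S \oplus B$-join still failing to $\Sigma^0_1$-define~$C$), and that the resulting $S$ is a bona fide $(S \oplus B)$-computable cross-subtree of~$T$ below which $(\rho^i,\sigma^i)_{i<2}$ remains a condition-tuple — these are exactly the same verifications as in the cone avoidance proof, and the combinatorial heavy lifting (\Cref{konig-sufficient} and \Cref{sufficient-imply-extendible}) has already been done. Assembling the final argument then mirrors the end of \Cref{cone-avoidance-basis}: take $c_0$ excluding a maximal number of components, take a sufficiently generic filter $\F \ni c_0$, and read off $X^i$, $Y^i$; infinitude comes from \Cref{left-full-ext}, the $\CC$-solution property from \Cref{lem:delta-basis-excludes}, and $C \not\in \Sigma^0_1((X^0,Y^0)\oplus(X^1,Y^1))$ from the density lemma applied to every~$e$.
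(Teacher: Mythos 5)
Your proposal is essentially identical to the paper's proof: the same forcing conditions with $B$ satisfying "$C$ is not $\Sigma^0_1(B)$", the same $A_x$ and $Q$, the same two substantive cases (using Wang's preservation of non-$\Sigma^0_1$ definitions for $\WKL$ on a $\Pi^0_1(B)$ class of witnessing subtrees in one case, and \Cref{sufficient-imply-extendible} in the other), and the same final assembly via a maximally excluding condition and a sufficiently generic filter. The only cosmetic differences are the order of the two cases and that you spell out the contradiction $Q = C$ as a third case, whereas the paper dispenses with it up front by noting $Q$ is $\Sigma^0_1(B)$ and hence $Q \neq C$.
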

\begin{proof}
    To prove the theorem, we use forcing with conditions of the form \(((\rho^i, \sigma^i)_{i<2}, U, B)\) where
    \begin{itemize}
        \item \(U\) is a \(B\)-computable cross-subtree of~$T$
        \item \((\rho^i, \sigma^i)_{i<2}\) is a condition-tuple for \([U]\)
        \item \(B\subseteq\NN\) is such that \(C\) is not \(\Sigma^0_1(B)\)
    \end{itemize}

    We want to satisfy the following requirements for every Turing index~$e$:
    \[\R_e : W_e^{(X^0, Y^0)\oplus(X^1, Y^1)}\neq C\]

    \begin{lemma}\label[lemma]{sigma01-basis-forcing-req}
    For every condition \(c \coloneqq \left((\rho^i, \sigma^i)_{i<2}, U, B\right)\) and every~$e \in \NN$, there is an extension of~$c$ forcing \(\R_e\).
    \end{lemma}
    \begin{proof}
    Given some \(x \in \NN\), consider the set 
    \[A_{x}\coloneqq\{(\widehat\rho^i, \widehat\sigma^i)_{i<2}\in\X_{<\NN}^2 : x\in W^{(\widehat\rho^0, \widehat\sigma^0)\oplus (\widehat\rho^1, \widehat\sigma^1)}_e\}\]

    Here again, the set $A_x$ is upward-closed and $\Sigma^0_1$ uniformly in~$x$.
    Let 
    $$Q\coloneqq\{x : A_{e,x}\text{ is }U\text{-sufficient over } (\rho^i, \sigma^i)_{i<2}\}$$
    The set $Q$ is \(\Sigma^0_1(B)\), thus \(Q\neq C\).
    This leads to two cases.
    \bigskip
    
    \emph{Case 1.} There is \(x\in C\smallsetminus Q\). Let $\L$ be the class of all cross-trees~$S\subseteq U$ which witness that $A_x$ is not \(U\)-sufficient over \((\rho^i, \sigma^i)_{i<2}\). It is non-empty by hypothesis, and since~$A_x$ is \(\Sigma^0_1\), then $\L$ is $\Pi^0_1(B)$. 
    Now since \(\WKL\) admits preservation of non-$\Sigma^0_1$ definitions (see \cite[Theorem 3.6]{wang2016definability}), there is a cross-tree \(S\in\L\) such that \(C\) is not \(\Sigma^0_1(S\oplus B)\). The condition \(d \coloneqq\big((\rho^i, \sigma^i)_{i<2}, S, S\oplus B\big)\) is the extension we are looking for, since \(x\in C\) but \(d\) forces that \(x\notin W^{(X^0,Y^0)\oplus (X^1,Y^1)}_e\).
    \bigskip
    
    \emph{Case 2.} There is \(x\in Q\smallsetminus C\).
    Unfolding the definition, \(A_x\) is \(U\)-sufficient over \((\rho^i, \sigma^i)_{i<2}\), so by \Cref{sufficient-imply-extendible}, there is a condition-tuple \((\widehat\rho^i, \widehat\sigma^i)_{i<2}\in A_x\) extending \((\rho^i, \sigma^i)_{i<2}\). The condition \(\big((\widehat\rho^i, \widehat\sigma^i)_{i<2}, U, B\big)\) is the desired extension, as it forces \(x \in W^{(X^0, Y^0)\oplus (X^1, Y^1)}_e\) for some~$x \not \in C$.
    \end{proof}

    We are now ready to prove \Cref{sigma01-basis}. Let $c_0 = \left((\rho^i, \sigma^i)_{i<2}, U, B\right)$ be a condition that excludes a maximal number of components and let $\F$ be a sufficiently generic filter for this notion of forcing, containing~$c_0$. For~$i < 2$, let $X^i = \bigcup \{ \rho^i : \left((\rho^i, \sigma^i)_{i<2}, U, B\right) \in \F \}$ and $Y^i = \bigcup \{ \sigma^i : \left((\rho^i, \sigma^i)_{i<2}, U, B\right) \in \F \}$. By \Cref{left-full-ext}, $X^i$ and $Y^i$ are both infinite sequences. By \Cref{lem:delta-basis-excludes}, for every~$s < r$, $Y^0_s \cap Y^1_s$ is infinite. By \Cref{sigma01-basis-forcing-req}, $C$ is not \(\Sigma^0_1((X^0, Y^0)\oplus(X^1, Y^1))\). This completes the proof of \Cref{sigma01-basis}.
\end{proof}

\begin{corollary}
    For any non-\(\Sigma^0_1\) set \(C\subseteq\NN\). There is a cross-constraint ideal such that \(C\) is not \(\Sigma^0_1\) relative to any element of the ideal.
\end{corollary}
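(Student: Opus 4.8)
The plan is to mirror verbatim the proof of the corollary following \Cref{cone-avoidance-basis}, replacing the appeal to the cone avoidance basis theorem by an appeal to \Cref{sigma01-basis}. First I would fix a non-$\Sigma^0_1$ set $C$ and construct, by recursion on $n$, a Turing-increasing sequence $Z_0 \leqslant_T Z_1 \leqslant_T \cdots$ such that $C$ is not $\Sigma^0_1$ relative to any $Z_n$, while every $\CC$-instance coded into the sequence eventually gets a solution. Concretely, set $Z_0 \coloneqq \emptyset$, so that the requirement \qt{$C$ not $\Sigma^0_1(Z_0)$} is exactly the hypothesis on $C$. Given $Z_n$ with $n = \langle k, e\rangle$, inspect $\Phi_e^{Z_k}$: if it is not a left-full cross-tree, put $Z_{n+1} \coloneqq Z_n$; otherwise, apply \Cref{sigma01-basis} relativized to $Z_n$ to obtain a solution $(X^i, Y^i)_{i<2}$ of $\Phi_e^{Z_k}$ such that $C$ is not $\Sigma^0_1\big((X^0,Y^0)\oplus(X^1,Y^1)\oplus Z_n\big)$, and set $Z_{n+1} \coloneqq (X^0,Y^0)\oplus(X^1,Y^1)\oplus Z_n$. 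This is legitimate because both the statement and the proof of \Cref{sigma01-basis} relativize to an arbitrary oracle.

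Next I would define $\M \coloneqq \{X \in 2^\NN : \exists n,\ X \leqslant_T Z_n\}$. Since the $Z_n$ form a $\leqslant_T$-chain, $\M$ is closed under Turing reducibility and under join, hence is a Turing ideal. It is a cross-constraint ideal: if $T \in \M$ is a left-full cross-tree, then $T \leqslant_T Z_k$ for some $k$, so $T = \Phi_e^{Z_k}$ for some index $e$, and at stage $n = \langle k, e\rangle$ the construction placed the join of a solution to $T$ into $Z_{n+1} \in \M$; in particular that solution lies in $\M$.

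Finally, for the preservation conclusion: any $X \in \M$ satisfies $X \leqslant_T Z_n$ for some $n$, and since \qt{$C$ is $\Sigma^0_1$ relative to $\cdot$} is monotone under $\leqslant_T$ while $C$ is not $\Sigma^0_1(Z_n)$, it follows that $C$ is not $\Sigma^0_1(X)$. There is no real obstacle in this argument; the only points deserving attention are that \Cref{sigma01-basis} is invoked in relativized form and that the base case $Z_0 = \emptyset$ is precisely the hypothesis that $C$ is not $\Sigma^0_1$.
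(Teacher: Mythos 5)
Your proof is correct and follows the paper's argument essentially verbatim: the same finite-extension construction of a Turing-increasing chain $Z_0 \leqslant_T Z_1 \leqslant_T \cdots$ using \Cref{sigma01-basis} in relativized form at each stage, followed by taking $\M$ to be the downward closure of the chain. The only differences are cosmetic (notation and a few extra sentences spelling out why $\M$ is a Turing ideal and why the preservation persists downward under $\leqslant_T$).
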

\begin{proof}
    We construct a sequence of sets \(Z_0\leqslant_T Z_1\leqslant_T\ldots\) such that for any integer \(n=\langle k, e\rangle\), \(C\) is not \(\Sigma^0_1(Z_n)\), and if \(\Phi_e^{Z_k}\) is an instance of \(\CC\), then \(Z_{n+1}\) computes a solution.

    Define \(Z_0\coloneqq\emptyset\). Suppose \(Z_n\) has been defined, and let $n = \langle k, e\rangle$. If \(\Phi_e^{Z_k}\) is not a left-full cross-tree, then \(Z_{n+1}\coloneqq Z_n\). Otherwise, by \Cref{sigma01-basis} relativized to \(Z_n\), there is a pair of paths \(P_0\) and \(P_1\), such that \(C\) is not \(\Sigma^0_1\) relative to \(P_0\oplus P_1\oplus Z_n\). In which case \(Z_{n+1}\coloneqq P_0\oplus P_1\oplus Z_n\).

    By construction, the class \(\M\coloneqq\{X\in 2^\NN:\exists n, X\leqslant_T Z_n\}\) is a cross-constraint ideal such that \(C\) is not \(\Sigma^0_1\) relative to any element in it.
\end{proof}

\begin{corollary}[Cross-constraint cone avoidance]
    Let \(C\) be a non-computable set.
    Any left-full computable instance \(T\) of \(\CC\), has a solution \((X^i, Y^i)_{i<2}\) such that \((X^0, Y^0)\oplus(X^1, Y^1)\not\geqslant_T C\).
\end{corollary}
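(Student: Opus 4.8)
The plan is to derive this from \Cref{sigma01-basis}, using the classical observation (see Downey et al.~\cite{downey2022relationships}) that preservation of non-$\Sigma^0_1$ definitions is a formal strengthening of cone avoidance. So the corollary should follow with almost no extra work; the only thing to supply is the bookkeeping that bridges the two different hypotheses (``$C$ non-computable'' versus ``$C$ non-$\Sigma^0_1$'').

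First I would reduce to the non-$\Sigma^0_1$ case. If both $C$ and its complement $\overline{C}$ were $\Sigma^0_1$, then $C$ would be $\Delta^0_1$, i.e.\ computable, contradicting the hypothesis; hence at least one of $C$, $\overline{C}$ is not $\Sigma^0_1$. Since a set computes $C$ if and only if it computes $\overline{C}$, we may replace $C$ by $\overline{C}$ if necessary and assume without loss of generality that $C$ is not $\Sigma^0_1$. Then I would apply \Cref{sigma01-basis} to the non-$\Sigma^0_1$ set $C$ and the given left-full computable instance $T$ of $\CC$, obtaining a solution $(X^i, Y^i)_{i<2}$ of $T$ such that $C$ is not $\Sigma^0_1$ relative to $Z \coloneqq (X^0, Y^0) \oplus (X^1, Y^1)$.

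Finally I would argue by contradiction: if $Z$ computed $C$, then $C$ would be $\Delta^0_1(Z)$, hence in particular $\Sigma^0_1(Z)$, contradicting the conclusion of the previous step. Therefore $(X^0, Y^0) \oplus (X^1, Y^1) \not\geqslant_T C$, which is exactly the statement of the corollary.

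I do not expect any real obstacle here: all the combinatorial content (the $T$-sufficiency machinery of \Cref{sect:combi-lemmas}, the forcing with conditions $((\rho^i,\sigma^i)_{i<2},U,B)$) has already been spent in the proof of \Cref{sigma01-basis}, and this step is purely the soft implication ``non-$\Sigma^0_1$ preservation $\Rightarrow$ cone avoidance''. The only mildly nontrivial point is the symmetrization to $\overline{C}$, which is needed precisely because cone avoidance is stated for an arbitrary non-computable $C$ while \Cref{sigma01-basis} requires $C$ to fail $\Sigma^0_1$-definability.
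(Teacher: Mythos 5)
Your proposal is correct and follows exactly the paper's approach: both observe that a non-computable $C$ has at least one of $C, \overline{C}$ non-$\Sigma^0_1$, then invoke \Cref{sigma01-basis} and conclude cone avoidance since $\Delta^0_1(Z) \subseteq \Sigma^0_1(Z)$. The only cosmetic difference is that you symmetrize to a WLOG on $C$ itself, whereas the paper applies the theorem directly to whichever of $C$ or $\overline{C}$ fails $\Sigma^0_1$-definability.
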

\begin{proof}
Suppose $C$ is non-computable. Then either~$C$ or $\overline{C}$ is not $\Sigma^0_1$.
By \Cref{sigma01-basis},  there is a solution \((X^i, Y^i)_{i<2}\in[T]^2\) such that either $C$ or $\overline{C}$ is not \(\Sigma^0_1\) relative to \((X^0, Y^0)\oplus(X^1, Y^1))\). In particular, \((X^0, Y^0)\oplus(X^1, Y^1)\not\geqslant_T C\).
\end{proof}

\subsection{Cross-constraint low basis theorem}

The low basis theorem for $\Pi^0_1$ classes is one of the most famous theorems in computability theory. We prove its counterpart for the cross-constraint problem. However, contrary to the case of $\Pi^0_1$ classes, where the theorem can be strengthened to obtain superlow sets, it does not seem to be the case for cross-constraint problems. 

\begin{theorem}[Cross-constraint low basis]\label{low-basis}
    Any left-full computable instance \(T\) of \(\CC\), has a solution \((X^i, Y^i)_{i<2}\) such that \((X^0, Y^0)\oplus(X^1, Y^1)\) is low.
\end{theorem}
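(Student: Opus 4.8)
The plan is to adapt the standard low basis theorem proof for $\Pi^0_1$ classes to the cross-constraint setting, using the same forcing notion as in the cone avoidance and non-$\Sigma^0_1$ preservation basis theorems, but now doing Shoenfield-style jump control rather than cone avoidance. First I would use conditions of the form $((\rho^i,\sigma^i)_{i<2}, U, B)$ where $U$ is a $B$-computable cross-subtree of $T$, $(\rho^i,\sigma^i)_{i<2}$ is a condition-tuple for $[U]$, and $B$ is low (i.e.\ $B' \leq_T \emptyset'$). As in the previous basis theorems, the extension relation refines the condition-tuple and shrinks $U$ and enlarges $B$, with the additional structural requirement that the condition-tuples keep being left-full below their stems, so that \Cref{left-full-ext} and \Cref{lem:delta-basis-excludes} guarantee the generic object yields a genuine solution $(X^i,Y^i)_{i<2}$ with $Y^0_s \cap Y^1_s$ infinite for each $s < r$.

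The key steps, in order. First, for each Turing index $e$, define the requirement $\R_e$ asserting that we decide whether $\Phi_e^{(X^0,Y^0)\oplus(X^1,Y^1)}(e)\!\downarrow$, so that the jump of the solution is controlled. Second, the main forcing lemma: given a condition $c = ((\rho^i,\sigma^i)_{i<2}, U, B)$ and index $e$, consider the $\Sigma^0_1$ set $A \coloneqq \{(\widehat\rho^i,\widehat\sigma^i)_{i<2} : \Phi_e^{(\widehat\rho^0,\widehat\sigma^0)\oplus(\widehat\rho^1,\widehat\sigma^1)}(e)\!\downarrow\}$, which is upward-closed. Ask (using $B'$, hence $\emptyset'$ since $B$ is low) whether $A$ is $U$-sufficient over $(\rho^i,\sigma^i)_{i<2}$. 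If yes, by \Cref{sufficient-imply-extendible} there is a condition-tuple in $A$ extending $(\rho^i,\sigma^i)_{i<2}$, and the resulting condition forces $\Phi_e^{(X^0,Y^0)\oplus(X^1,Y^1)}(e)\!\downarrow$. If no, let $\L$ be the $\Pi^0_1(B)$ class of cross-trees $S \subseteq U$ witnessing non-$U$-sufficiency; by the (relativized) low basis theorem for $\Pi^0_1$ classes applied over $B$, pick $S \in \L$ with $(S\oplus B)' \leq_T B'$, and the condition $((\rho^i,\sigma^i)_{i<2}, S, S\oplus B)$ forces $\Phi_e^{(X^0,Y^0)\oplus(X^1,Y^1)}(e)\!\uparrow$. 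In either case $S \oplus B$ remains low. Third, build a sufficiently generic filter $\F$ starting from a condition $c_0$ excluding a maximal number of components; the sequence of $B$'s encountered is uniformly low, and one checks $\emptyset'$ can compute, uniformly in $e$, which case was taken, whence $((X^0,Y^0)\oplus(X^1,Y^1))' \leq_T \emptyset'$. Conclude as in the previous proofs using \Cref{left-full-ext} and \Cref{lem:delta-basis-excludes}.

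The main obstacle, as the remark before the statement hints, is the bookkeeping that keeps the construction genuinely low rather than merely $\Delta^0_2$: unlike the $\Pi^0_1$ case, one cannot get superlowness, because the sequence of cross-subtrees $U$ shrinks along the construction in a way whose $\emptyset'$-decidability is not uniform in a fixed oracle-use bound. Concretely, deciding $U$-sufficiency of a $\Sigma^0_1$ set is $\Sigma^0_1(U)$ (as noted after the definition of $T$-sufficiency), so it is decidable by $B'$; but to conclude the generic's jump is $\emptyset'$-computable one must verify that $\emptyset'$ can follow the whole construction, which requires that at stage $e$ the relevant $B_e$ (the oracle of the current condition) satisfies $B_e' \leq_T \emptyset'$ \emph{uniformly}, and that the case distinction at stage $e$ is an $\emptyset'$-question with answer not depending on later choices. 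This is exactly the standard low-basis nested-$\Pi^0_1$-class argument, but one must be careful that the extra largeness requirement (left-fullness of the condition-tuple, which must be re-established after passing to $S$) does not leak complexity: here one uses that $S$ was chosen inside the $\Pi^0_1(B)$ class $\L$, so left-fullness of the stems in $[S]$ is automatic from the definition of $\L$, and that excluding components only happens finitely often since $r$ is standard. Once these uniformity points are handled, the rest is a routine transcription of the low basis theorem.
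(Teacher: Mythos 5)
Your proof follows essentially the same route as the paper's: same forcing conditions $((\rho^i,\sigma^i)_{i<2}, U, B)$, same $\Sigma^0_1$ set $A_e$, same dichotomy on $U$-sufficiency decided by $B' \le_T \emptyset'$, use of \Cref{sufficient-imply-extendible} in the positive case and the (uniform) low basis theorem for $\Pi^0_1(B)$ classes in the negative case, a first condition excluding a maximal number of components, and the combination of \Cref{left-full-ext} and \Cref{lem:delta-basis-excludes} to make the generic a genuine $\CC$-solution. The only cosmetic difference is that the paper makes the $\emptyset'$-uniformity explicit via a notion of index for conditions and builds a $\emptyset'$-computable descending sequence rather than speaking of a "sufficiently generic filter", but the bookkeeping you describe amounts to the same thing.
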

\begin{proof}
    To prove the theorem, we use forcing with conditions of the form
    \[\left((\rho^i, \sigma^i)_{i<2}, U, B\right)\]
    where
    \begin{itemize}
        \item \(U\) is a $B$-computable cross-subtree of $T$
        \item \((\rho^i, \sigma^i)_{i<2}\) is a condition-tuple for $[U]$
        \item \(B\) is a set of low degree
    \end{itemize}
    An \emph{index} for a condition $\left((\rho^i, \sigma^i)_{i<2}, U, B\right)$ is a tuple $\left((\rho^i, \sigma^i)_{i<2}, a, b\right)$ such that $\Phi^B_a = U$ and $\Phi^{\emptyset'}_b = B'$. An index is therefore a finite representation of a condition.
    We say that a condition \(c \coloneqq \left((\rho^i, \sigma^i)_{i<2}, U, B\right)\) \emph{decides the jump on~$e$} 
    if either \(\Phi^{(\rho^0, \sigma^0)\oplus (\rho^1, \sigma^1)}_e(e)\downarrow\) holds or $c$ forces \(\Phi^{(X^0, Y^0)\oplus(X^1, Y^1)}_e(e)\uparrow\).
    
    \begin{lemma}\label[lemma]{lem:low-basis-deciding-jump}
    For every condition \(c \coloneqq \left((\rho^i, \sigma^i)_{i<2}, U, B\right)\) and every~$e \in \NN$,
    there is an extension $d$ of~$c$ deciding the jump on~$e$. Moreover, an index for~$d$ can be found $\emptyset'$-uniformly in~$e$ and an index for~$c$.
    \end{lemma}
    \begin{proof}
    Consider the following $\Sigma^0_1$ set 
    \[A_e\coloneqq\{(\widehat\rho^i, \widehat\sigma^i)_{i<2}\in\X_{<\NN}^2 : \Phi^{(\widehat{\rho}^0, \widehat{\sigma}^0)\oplus (\widehat{\rho}^1, \widehat{\sigma}^1)}_e(e)\downarrow\}\]
    
    \emph{Case 1. \(A_e\) is not \(U\)-sufficient over \((\rho^i, \sigma^i)_{i<2}\).}
    Let $\L$ be the class of all cross-trees~$S\subseteq T$ which witness that $A_e$ is not \(U\)-sufficient over \((\rho^i, \sigma^i)_{i<2}\). Since~$A_e$ is \(\Sigma^0_1\), then $\L$ is $\Pi^0_1(B)$. 
    By the uniform low basis theorem relative to~$B$ (see \cite[Theorem 4.1]{hirschfeldt2008strength}), there is some \(S \in \L\) such that \((S\oplus B)' \leqslant_T \emptyset'\). Moreover, a lowness index of $S\oplus B$ (an integer~$a$ such that $\Phi^{\emptyset'}_a = (S \oplus B)'$) can be $\emptyset'$-computed from an index of \(\L\).
    The condition \(d\coloneqq((\rho^i, \sigma^i)_{i<2}, S, S\oplus B)\) is the extension we are looking for. Indeed, \(\Phi^{(X^0, Y^0)\oplus(X^1, Y^1)}_e(e)\uparrow\) holds for any \((X^i, Y^i)_{i<2}\in [d]\).
    \bigskip

    \emph{Case 2. \(A_e\) is \(U\)-sufficient over \((\rho^i, \sigma^i)_{i<2}\).} By \Cref{sufficient-imply-extendible}, there is a condition-tuple \((\widehat\rho^i, \widehat\sigma^i)_{i<2}\in A_e\) for \([U]\) which extends \((\rho^i, \sigma^i)_{i<2}\).
    Thus, the condition \(\big((\widehat\rho^i, \widehat\sigma^i)_{i<2}, T, B\big)\) is the desired extension, since \(\Phi_e^{(\widehat\rho^0, \widehat\sigma^0)\oplus (\widehat\rho^1, \widehat\sigma^1)}(e)\downarrow\).
    
    
    Finally, note that \(\emptyset'\) can decide whether or not it is in the first or second case, since \(A_e\) is \(\Sigma^0_1\), and so ``\(A_e\) is \(T\)-sufficient over \((\rho^i, \sigma^i)_{i<2}\)" also is. Hence, each extension can be uniformly computed from \(\emptyset'\).
    \end{proof}

    We are now ready to prove \Cref{low-basis}. We build a uniformly $\emptyset'$-computable descending sequence of conditions $c_0 \geq c_1 \geq \dots$ such that for every~$n$, letting $c_n = \left((\rho^i_n, \sigma^i_n)_{i<2}, U_n, B_n\right)$
    \begin{itemize}
        \item $c_{n+1}$ decides the jump on~$n$ ;
        \item $|\rho^i_n| \geq n$ ; $|\sigma^i_{n,s}| \geq n$ for every~$s < r$;
        \item $(\sigma^0_{n+1,s}, \sigma^1_{n+1,s})$ is not completely incompatible over $(\sigma^0_{n,s}, \sigma^1_{n,s})$.
    \end{itemize}
    Let $c_0 = \left((\rho^i, \sigma^i)_{i<2}, U, B\right)$ be a condition which excludes a maximal number of components. Note that $c_0$ does not need to be found in~$\emptyset'$, since it is a one-time guess.
    Assuming $c_n$ has been defined, by \Cref{lem:low-basis-deciding-jump}, there is an extension $c^1_n \leq c_n$
    deciding the jump on~$n$. By \Cref{left-full-ext}, there is an extension $c^2_n \leq c^1_n$ satisfying the second item, and by \Cref{lem:delta-basis-excludes}, there is an extension $c_{n+1} \leq c^2_n$ satisfying the third item.
    Moreover, indices for each of these extensions can be found $\emptyset'$-computably uniformly in~$n$.
    This completes the proof of \Cref{low-basis}.
\end{proof}

\begin{corollary}
    There is a cross-constraint ideal that contains only low sets.
\end{corollary}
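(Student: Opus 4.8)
The plan is to iterate the cross-constraint low basis theorem, exactly as in the corollaries following \Cref{cone-avoidance-basis} and \Cref{sigma01-basis}, the only additional care being that lowness must be maintained along the iteration.

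First I would construct a sequence of sets $Z_0 \leqslant_T Z_1 \leqslant_T \ldots$ such that each $Z_n$ is low and, for every integer $n = \langle k, e \rangle$, if $\Phi_e^{Z_k}$ is a left-full cross-tree then $Z_{n+1}$ computes a solution to it. Set $Z_0 \coloneqq \emptyset$. Assuming $Z_n$ has been defined and is low, write $n = \langle k, e \rangle$. If $\Phi_e^{Z_k}$ is not a left-full cross-tree, let $Z_{n+1} \coloneqq Z_n$. Otherwise, since $Z_k \leqslant_T Z_n$, the cross-tree $\Phi_e^{Z_k}$ is a $Z_n$-computable instance of $\CC$, so by \Cref{low-basis} relativized to $Z_n$ there is a solution $(X^i, Y^i)_{i<2}$ such that $\big((X^0, Y^0)\oplus(X^1, Y^1)\big)\oplus Z_n$ is low over $Z_n$, \ie $\big(\big((X^0, Y^0)\oplus(X^1, Y^1)\big)\oplus Z_n\big)' \leqslant_T Z_n'$. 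Put $Z_{n+1} \coloneqq \big((X^0, Y^0)\oplus(X^1, Y^1)\big)\oplus Z_n$. Since $Z_n$ is low, $Z_n' \leqslant_T \emptyset'$, hence $Z_{n+1}' \leqslant_T \emptyset'$, so $Z_{n+1}$ is low and the induction hypothesis is preserved.

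Then I would let $\M \coloneqq \{ X \in 2^\NN : \exists n,\ X \leqslant_T Z_n \}$. Since the sequence $(Z_n)$ is $\leqslant_T$-increasing and the requirement handled at stage $n$ is indexed by a pair $\langle k, e \rangle$, the class $\M$ is closed under join and under Turing reducibility, hence is a Turing ideal. It is a cross-constraint ideal: any instance $T \in \M$ of $\CC$ equals $\Phi_e^{Z_k}$ for some $k$ and $e$, and then $Z_{n+1}$ with $n = \langle k, e \rangle$ computes a solution $(X^i, Y^i)_{i<2}$, so $(X^0, Y^0)\oplus(X^1, Y^1) \in \M$. Finally, every $X \in \M$ lies below some low $Z_n$, so $X' \leqslant_T Z_n' \leqslant_T \emptyset'$ and $X$ is low; thus $\M$ contains only low sets.

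The only genuine point to verify is that \Cref{low-basis} relativizes so as to produce a solution that is low \emph{over} the oracle $Z_n$, not merely low; this is immediate from inspecting its proof, where the reservoirs $B$ of conditions, taken of low degree, are instead taken to be low over $Z_n$, and the descending sequence of conditions is built uniformly in $Z_n'$, which $\emptyset'$ computes since $Z_n$ is low. No further difficulty arises; the rest is the same bookkeeping as in the two preceding corollaries.
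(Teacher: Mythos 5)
Your proof is correct and takes essentially the same approach as the paper: iterate the cross-constraint low basis theorem relativized to $Z_n$ to get a solution that is low over $Z_n$, observe that lowness-over-a-low-set gives lowness so the invariant is preserved, and take the resulting union as the ideal. You spell out the "low over $Z_n$" relativization and the closure properties of $\M$ in somewhat more detail than the paper does, but the argument is the same.
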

\begin{proof}
    We construct a sequence of sets \(Z_0\leqslant_T Z_1\leqslant_T\ldots\) such that for any integer \(n=\langle k, e\rangle\), \(Z_n\) is low, and if \(\Phi_e^{Z_k}\) is an instance of \(\CC\), then \(Z_{n+1}\) computes a solution.

    Define \(Z_0\coloneqq\emptyset\). Suppose \(Z_n\) has been defined, and let~$n = \langle k, e \rangle$. If \(\Phi_e^{Z_k}\) is not a left-full cross-tree, then \(Z_{n+1}\coloneqq Z_n\). Otherwise, by \Cref{low-basis} relativized to \(Z_n\), there is a pair of paths \(P_0\) and \(P_1\), such that \((P_0\oplus P_1\oplus Z_n)'\leqslant_T Z_n'\). In which case \(Z_{n+1}\coloneqq P_0\oplus P_1\oplus Z_n\).

    By construction, the class \(\M\coloneqq\{X\in 2^\NN:\exists n, X\leqslant_T Z_n\}\) is a cross-constraint ideal containing only low sets.
\end{proof}



\subsection{Yet some other antibasis theorem}

As noted by Liu~\cite[section 4.6]{liu2023coding}, if two 2-colorings are completely incompatible over a pair, then they are Turing equivalent. Then, instead of requiring that two 2-colorings are infinitely compatible, one could strengthen the requirement and ask them not to be Turing equivalent. He therefore asked the following question.

\begin{question}[{Liu~\cite[Question 4.25]{liu2023coding}}]
Given two incomputable oracles $D_0 \not \geq_T D_1$, a non-empty $\Pi^0_1$ class $\P \subseteq 2^\NN$,
does there exist an $X \in \P$ such that $X \not \geq_T D_0$ and $D_0 \oplus X \not \geq_T D_1$?
\end{question}

There is a negative answer thanks to a theorem of Day and Reimann~\cite{day2014independence}:

\begin{theorem}[{Day and Reimann~\cite[Corollary 2.1]{day2014independence}}]
Suppose that $X$ has PA degree and $C$ is a c.e. set. Then either $C \oplus X \geq_T \emptyset'$, or $X \geq_T C$.
\end{theorem}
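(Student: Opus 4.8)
The plan is to reduce the statement to the relativized (``c.e.\ in and above'') form of Arslanov's completeness criterion: if $B$ is any set, $A \geq_T B$, and $A$ is computably enumerable relative to $B$, then $A \geq_T B'$ if and only if $A$ computes a function $g$ that is diagonally noncomputable relative to $B$, i.e.\ a total $g \leq_T A$ with $g(e) \neq \Phi^B_e(e)$ whenever $\Phi^B_e(e)\downarrow$. I would apply this with $B = X$ and $A = C \oplus X$. This $A$ satisfies the hypotheses: it is $\geq_T X$, and it is $X$-c.e.\ since $C$ is c.e.\ (hence $X$-c.e.) while $X$ is trivially $X$-c.e. Since $\emptyset \leq_T X$ forces $X' \geq_T \emptyset'$, it follows that \emph{whenever $C \oplus X$ computes an $X$-DNC function, $C \oplus X \geq_T X' \geq_T \emptyset'$}, which is the first disjunct. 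Hence the whole theorem reduces to the implication: if $X$ has PA degree and $X \not\geq_T C$, then $C \oplus X$ computes an $X$-DNC function (if $X \geq_T C$ we are in the second disjunct and there is nothing to prove).

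Before attacking that, the characterization of PA degrees as those computing a completion of $\mathrm{PA}$ clarifies the landscape. Fix an $X$-computable completion $T$ of $\mathrm{PA}$ in which $C$ is named by its canonical $\Sigma^0_1$ definition coming from a fixed computable enumeration $(C_s)_s$, and set $K^{*} = \{e : \exists s\ \Phi_e(e)\downarrow \text{ with oracle } C_s\}$. Then $K^{*}$ is c.e.\ (so $K^{*} \leq_T \emptyset'$), and routine index manipulations give $\emptyset' \leq_T K^{*}$ (computations ignoring the oracle) and $C \leq_T K^{*}$ (computations that merely query the oracle), so $K^{*} \equiv_T \emptyset'$. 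Because $T \supseteq \mathrm{PA}$ proves every true $\Sigma^0_1$ sentence, the $X$-computable set $D = \{e : T \vdash (e \in K^{*})\}$ contains $K^{*}$. If $D = K^{*}$ then $X \geq_T K^{*} \equiv_T \emptyset'$ and we are in the first disjunct — but this is really only the trivial case $X \geq_T \emptyset'$, so the case that matters is $D \supsetneq K^{*}$, where $T$ is $\Sigma^0_1$-unsound and $X$ visibly ``overshoots'' the c.e.\ set $K^{*} \equiv_T \emptyset'$.

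The heart of the proof is then constructing the required $X$-DNC function $g \leq_T C \oplus X$ directly, under the standing hypothesis $X \not\geq_T C$. I would do this by a recursion-theorem construction relative to $X$: for each $e$, set up (via the recursion theorem) an index whose $\Phi^X$-computation delays committing to a value until the enumeration of $C$ performs a designated action, and let $g(e)$ be read off from $C$ at the moment that action occurs, so that $X \not\geq_T C$ prevents $\Phi^X_e$ from anticipating the action and thereby forces $\Phi^X_e(e)$ either to diverge or to disagree with $g(e)$. I expect this to be the main obstacle: one must make $g$ total and argue that the \emph{single} non-reduction $C \not\leq_T X$ defeats \emph{all} functionals $\Phi^X_e$ simultaneously, which is where the one-sidedness of the c.e.\ approximation to $C$ (elements only ever enter) has to be exploited to keep the waiting behaviour monotone and the injury finite. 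The overshoot exhibited in the previous paragraph is reassuring here — $\Sigma^0_1$-unsoundness of $T$ already witnesses $X$ sitting strictly above a copy of $\emptyset'$ without computing it, and the role of $C$ is exactly to convert that slack into an honest computation of $\emptyset'$.
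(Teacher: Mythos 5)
The paper does not prove this statement; it is quoted verbatim from Day and Reimann as a black box. So the only question is whether your argument is a correct proof, and I don't think it is. The Arslanov reduction proves too much: applied with $A = C \oplus X$ and $B = X$, the relativized completeness criterion gives $C \oplus X \geq_T X'$, which is strictly stronger than the stated conclusion $C \oplus X \geq_T \emptyset'$. Since $C \leq_T \emptyset'$ for any c.e.\ $C$, the conclusion $X' \leq_T C \oplus X$ would force $X' \leq_T X \oplus \emptyset'$, i.e.\ $X$ would have to be $\mathrm{GL}_1$. But there are PA degrees $X$ with $X \not\geq_T \emptyset'$ and $X' >_T X \oplus \emptyset'$ (for instance high PA degrees avoiding the cone above $\emptyset'$, which one can build by a routine forcing over the $\Pi^0_1$ class of PA completions). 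For such an $X$, take $C = \emptyset'$: then $X \not\geq_T C$, yet $C \oplus X = \emptyset' \oplus X <_T X'$, so by the very direction of Arslanov you invoke (since $C \oplus X$ is c.e.\ in and above $X$), $C \oplus X$ does \emph{not} compute any $X$-DNC function. So the intermediate claim on which your whole plan rests --- that $C \oplus X$ always computes an $X$-DNC function when $X$ is PA and $X \not\geq_T C$ --- is false, and the recursion-theorem construction sketched in your last paragraph cannot work in general, which is consistent with your own hesitation there.

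The underlying mismatch is that you are trying to diagonalize against all $X$-computable functions, whereas the theorem only asks for $\emptyset'$, and the only dynamic data available are the c.e.\ approximations to $C$ and $\emptyset'$. The actual Day--Reimann argument stays at the level of the (unrelativized) $\{0,1\}$-valued DNC function that $X$ computes: via the recursion theorem one manufactures indices whose halting behaviour is tied to the joint enumeration of $C$ and $\emptyset'$, so that the separator's answers, when cross-checked against the true $C$, directly reveal $\emptyset'$. No $X$-DNC function ever appears and no claim $C \oplus X \geq_T X'$ is made. Your second paragraph (the $K^{*}$ and $D \supsetneq K^{*}$ discussion) is pointing in roughly the right direction --- the ``overshoot'' of a PA oracle over a c.e.\ set is indeed the engine of the proof --- but it is not connected to the rest of your plan and does not by itself produce the computation of $\emptyset'$.
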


Thus, letting $D_0$ be a incomplete non-computable c.e. set, $D_1 = \emptyset'$ and $\P$ be a non-empty $\Pi^0_1$ class of only PA degrees, there is no $X \in \P$ satisfying simultaneously $X \not \geq_T D_0$ and $D_0 \oplus X \not \geq_T D_1$.

\section{\texorpdfstring{\(\Gamma\)-hyperimmunity}{Gamma-hyperimmunity}}\label[section]{sect:gamma-hyperimmunity}

\Cref{param-1} can be instantiated by considering the cross-constraint ideal of all arithmetic sets and letting $f$ be a Cohen arithmetically-generic 3-coloring of the integers. However, to obtain a separation of $\mathsf{D}^2_3$ from $(\mathsf{D}^2_2)^*$ over computable reduction, one needs to build a $\Delta^0_2$ such coloring~$f$. Liu~\cite{liu2023coding} designed a new invariance property called preservation of $\Gamma$-hyperimmunity, which is satisfied by weak K\"onig's lemma and the cross-constraint problem, but not by $\mathsf{D}^2_2$. We re-define this notion and prove that it is preserved by $\COH$, which is a new result, enabling to separate $\mathsf{D}^2_3$ from $(\RT^2_2)^*$ over computable reduction.

The notion of $\Gamma$-hyperimmunity might seem ad-hoc at first sight, but becomes clearer when looking at the proof of preservation of $\Gamma$-hyperimmunity of the cross-constraint problem (Liu~\cite[Lemma 4.2]{liu2023coding}) for which it was specifically designed.

Recall that  an instance \(f\) of \(\RT^1_k\) is \emph{hyperimmune} relative to \(D\subseteq\NN\) if for every \(D\)-computable sequence $g_0, g_1, \dots$ of partial functions from $\NN$ to $k$ with finite support, such that $\min \dom g_n > n$, there is some \(N\in\NN\) such that $g_n$ is compatible with~$f$. $\Gamma$-hyperimmunity is a strengthening of this notion of hyperimmunity, between $\emptyset$-hyperimmunity and $\emptyset'$-hyperimmunity, by considering computable approximations of sequences of partial functions with finite mind changes, for a very specific family of approximations. They seem to be closely related to the Ershov hierarchy~\cite{ershov1968certain}.

The following series of definitions (\Cref{def:one-step-variation,def:computation-path,def:gamma-spaces,def:interpretation-computation,def:over,def:gamma-m-approximation} yield the notion of $\Gamma$-approximation, from which $\Gamma$-hyperimmunity is derived.

\begin{definition}\label[definition]{def:one-step-variation}
    A tree \(T_1\subseteq \NN^{<\NN}\) is a \emph{one-step variation} of \(T_0\subseteq \NN^{<\NN}\) if there is a node \(\xi\in T_0\) and
    a non-empty finite set \(F\subseteq\NN\) such that
    \begin{itemize}
        \item either \(\xi\in\ell(T_0)\) and \(T_1=T_0\cup\xi\cdot F\) 
        \item or \(\xi\in T_0-\ell(T_0)\), \(T_1=(T_0-[\xi]^\prec)\cup\xi\cdot F\) and \(F\subsetneq \{n \in \NN : \xi\cdot n \in T_0\}\)
    \end{itemize}
\end{definition}

In other words, a one-step variation of a tree consists of either extending a leaf with finitely many immediate children, or backtracking by removing a node, and making all its siblings leaves again. This is a non-reflexive relation. The notion of one-step variation can be reminiscent of the Hydra game~\cite{kirby1982accessible}.

\begin{definition}\label[definition]{def:computation-path}
    Fix a partial order \((W, \preccurlyeq)\) which is a tree of root \(\zeta\). A \emph{computation path} on \((W, \preccurlyeq)\) is a finite sequence \((T_0, \varphi_0), (T_1, \varphi_1), \ldots, (T_{u-1}, \varphi_{u-1})\) where, for all \(j<u\), \(T_j\subseteq\NN^{<\NN}\) is a finite tree such that 
    \begin{itemize}
        \item \(T_0=\{\varepsilon\}\)
        \item \(j\in\NN\), \(T_{j+1}\) is a one-step variation of \(T_j\)
    \end{itemize}
    And, for all \(j<u\), \(\varphi_j:T_j\to W\) is a function such that
    \begin{itemize}
        \item \(\varphi_j(\varepsilon)=\zeta\)
        \item \(\varphi_j\) is non-decreasing 
        \item \(\varphi_{j+1}\) and \(\varphi_j\) are compatible, \ie \(\varphi_{j+1}=\varphi_j\) on the domain \(T_{j+1}\cap T_j\)
    \end{itemize}
\end{definition}

We shall consider exclusively well-founded trees \((W, \preccurlyeq)\), in which case any computation path is of finite length. The notion of computation path can be used as an operator to define an infinite hierarchy of well-founded trees.

\begin{definition}[Gamma spaces]\label[definition]{def:gamma-spaces}
    By induction, we define the partial orders \((\Gamma_m, \preccurlyeq_m)\) that are trees of root \(\zeta_m\)
    \begin{itemize}
        \item \(\Gamma_0\coloneqq\{f:\NN\to 3: \dom f\text{ is finite}\}\), forming a tree of depth 1, with the empty map \(\zeta_0\) as its root, and every other element as an immediate child of the root.
        \item \(\Gamma_{m+1}\) is the set of computation paths on \((\Gamma_m, \preccurlyeq_m)\), its root \(\zeta_{m+1}\) is the computation path \((\{\varepsilon\}, \varepsilon\mapsto\zeta_m)\), and \(\preccurlyeq_{m+1}\) is the prefix relation on sequences.
    \end{itemize}
\end{definition}

Intuitively, the root of $\Gamma_1$ is the nowhere-defined function, the immediate children are finite sets of functions with finite support, and the sub-branches consist of removing elements from this finite set. 

\begin{lemma}[{Liu~\cite[Lemma 4.12]{liu2023coding}}]\label{gamma-well-founded}
    For all \(m\in\NN\), the tree \(\Gamma_m\) is well-founded.
\end{lemma}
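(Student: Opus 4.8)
The plan is to prove by induction on $m \in \NN$ that $\Gamma_m$ is well-founded, i.e.\ has no infinite $\preccurlyeq_m$-increasing chain. The base case $m = 0$ is immediate: $\Gamma_0$ is a tree of depth $1$, so every chain has length at most $2$. The inductive step is where all the work lies, so I would organize it as a lemma of its own flavor: assuming $(\Gamma_m, \preccurlyeq_m)$ is well-founded, show that the tree of computation paths on it is well-founded. A $\preccurlyeq_{m+1}$-increasing chain is just an infinite computation path $(T_0,\varphi_0), (T_1,\varphi_1), \ldots$ on $(\Gamma_m, \preccurlyeq_m)$, so I need to show no such infinite sequence exists.

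The key idea is to assign to each pair $(T_j, \varphi_j)$ an ordinal rank $o(T_j, \varphi_j)$ that strictly decreases with $j$. Since $\Gamma_m$ is well-founded, it carries an ordinal rank function $\mathrm{rk}_m : \Gamma_m \to \mathrm{Ord}$ (with leaves getting rank $0$ and $\mathrm{rk}_m(x) = \sup\{\mathrm{rk}_m(y)+1 : y \succ_m x\}$). For a finite tree $T \subseteq \NN^{<\NN}$ labelled by $\varphi : T \to \Gamma_m$ that is non-decreasing, I would set $o(T,\varphi)$ to be something like a Hessenberg (natural) sum, over the leaves $\xi$ of $T$, of $\mathrm{rk}_m(\varphi(\xi))$, possibly bundled with a secondary bookkeeping ordinal measuring the branching structure; the natural sum is needed so that replacing one leaf by several children (each with $\varphi$-label $\succcurlyeq_m$ the old label, hence rank $\le$ the old rank, but in fact one must be careful) still decreases the total. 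The two types of one-step variation must be checked separately: (i) \emph{extending a leaf} $\xi$ with children $\xi\cdot F$ — here the new labels $\varphi_{j+1}(\xi\cdot n)$ strictly $\succ_m$-extend $\varphi_j(\xi)$ since $\varphi_{j+1}$ is non-decreasing and $\xi$ is no longer a leaf, wait—actually $\varphi_{j+1}(\xi)=\varphi_j(\xi)$ but the children get $\succcurlyeq_m$; one needs that at least the structure forces a drop, which is why the rank of a non-leaf node exceeds that of its descendants, so the leaf $\xi$ of rank $\mathrm{rk}_m(\varphi_j(\xi))$ is replaced by finitely many leaves of rank $\le \mathrm{rk}_m(\varphi_j(\xi))$, and I must ensure at least one is strictly smaller or absorb the branching count into the ordinal; (ii) \emph{backtracking} — removing $[\xi]^\prec$ and turning siblings back into leaves strictly shrinks $T$ and clearly should decrease the measure, since we delete a whole subtree.

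The main obstacle is case (i): a pure sum of leaf-ranks need not decrease when a leaf is replaced by several children with equal labels (the natural sum could stay the same or even appear to grow if we're not careful), and the definition of one-step variation allows the new labels to equal the old (the $\varphi$-labels only need to be non-decreasing, not strictly increasing). The resolution, which I expect to mirror Liu's, is to make the ordinal measure account for the ``potential'' of each leaf to be expanded — essentially an ordinal of the form $\omega^{\alpha_1} + \cdots + \omega^{\alpha_k}$ with $\alpha_i$ the ranks of the leaves, combined so that an extension step strictly lowers it in the Cantor normal form sense (replacing one $\omega^\alpha$ summand by finitely many summands $\omega^{\beta}$ with $\beta \le \alpha$, and at least a ``decrement'' recorded because the parent node loses its leaf status). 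Equivalently, one can define $o(T,\varphi)$ directly by recursion on $T$: $o(T,\varphi) = \bigoplus_{\xi\cdot n \in T} (o(T_{\xi\cdot n}, \varphi) + 1)$ over immediate children, with $o$ of a leaf $\xi$ equal to $\mathrm{rk}_m(\varphi(\xi))$, and then check both one-step variations decrease this. I would also double-check the edge case where a leaf's label has rank $0$ but it still gets extended — this is where having the ``$+1$'' per child and the well-foundedness of $\Gamma_m$ interact. Once the strictly decreasing ordinal assignment is in place, an infinite computation path yields an infinite strictly decreasing ordinal sequence, a contradiction, completing the induction.
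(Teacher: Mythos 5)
Your plan is genuinely different from the paper's. The paper proves a \emph{stronger} structural invariant by induction: $\Gamma_m$ is a ``fan'' with infinitely many height-1 nodes, each of which carries a \emph{finite} subtree above it. From that it argues combinatorially: the first extension step of a computation path commits to finitely many of these finite subtrees; since $\varphi_j$ embeds $T_j$ into this finite union, the trees $T_j$ have bounded depth and branching; then one bounds, recursively in depth starting from the root (which can never be re-leafed), how often each node can be re-leafed by a backtrack on its parent, and hence how often each of the two variation rules can fire. You instead go for a decreasing ordinal assignment. That plan is salvageable, but as written it has two concrete problems. First, the formula you settle on,
\[
o(T,\varphi)=\bigoplus_{\xi\cdot n\in T}\bigl(o(T_{\xi\cdot n},\varphi)+1\bigr),\qquad o(\text{leaf }\xi)=\mathrm{rk}_m(\varphi(\xi)),
\]
does \emph{not} strictly decrease under an extension step: if a leaf of rank $\alpha$ is replaced by two children of rank $\alpha-1$, the new contribution is $(\alpha-1+1)\oplus(\alpha-1+1)=\alpha\cdot 2>\alpha$. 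The $\omega^{\alpha}$ version you also mention does handle extensions, but only once you commit to reading ``non-decreasing'' in \Cref{def:computation-path} as ``strictly order-preserving'' (an embedding), a point you rightly flag as unclear --- the paper's own proof silently uses ``embedding.''

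Second, and more seriously, you dismiss the backtrack step as ``clearly'' decreasing the measure ``since we delete a whole subtree.'' That is exactly where the difficulty is. Backtracking at an inner node $\xi$ re-leafs the surviving children $\xi\cdot n$, and a leaf's ordinal $\omega^{\mathrm{rk}_m(\varphi(\xi\cdot n))}$ is an \emph{upper bound}, not a lower bound, for the ordinal of the inner subtree it replaces; so that part of the sum can strictly \emph{increase}. The only thing that forces a decrease is that at least one sibling is deleted ($F\subsetneq\{n:\xi\cdot n\in T_0\}$), and neither of your proposed measures is sensitive to the number of children in the right way. A measure that does work has to separate ``rank'' from ``child count,'' e.g.\ $o(\text{leaf }\xi)=\omega^{2\alpha+2}$ and, for an inner node with $k$ children, $o(\xi)=\omega^{2\alpha+1}\cdot k\oplus\bigoplus_n o(\xi\cdot n)$, where $\alpha=\mathrm{rk}_m(\varphi(\xi))$: the extension step drops because $2\beta+2\le 2\alpha$ for $\beta<\alpha$, and the backtrack step drops because the children term $\omega^{2\alpha+1}\cdot k$ strictly decreases and dominates the re-leafed contributions $\omega^{2\alpha_n+2}\le\omega^{2\alpha}$. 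Without some such correction, the core step of your induction is unproved.
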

\begin{proof}
    By induction on \(m\), we show that the structure of \(\Gamma_m\) is the following: there are infinitely many nodes of height 1, and for each node of height 1, the subtree above it is finite. In particular, it is a well-founded tree.
    
    The tree \(\Gamma_0\) satisfies the statement, by construction. Now suppose \(\Gamma_m\) has the above structure for some \(m\in\NN\). A computation path on \(\Gamma_m\) will first select finitely many nodes, say \(\xi_0, \ldots, \xi_{n-1}\), of height 1.
    
    Since the function in a computation path is an embedding into \(\Gamma_m\), it means that the subtree above \(\xi_i\) (for any \(i<n\)) is finite. Furthermore, the ``either" case can only be applied finitely many times to a node. Indeed, the ``or" case can turn an inner node back into a leaf, but it can only do so finitely many times, because it strictly decreases the cardinal of the set of direct successors of the node it is applied to. Hence, there can only be finitely many computation paths.
\end{proof}

Due to the inductive nature of the definition of $\Gamma$-spaces, its elements are relatively abstract. One must think of a computation path in $\Gamma_m$-space as a finite set of partial functions from $\NN$ to $k$ with finite support.

\begin{definition}\label[definition]{def:interpretation-computation}
The \emph{interpretation} $\inter{\gamma}$ of a computation path $\gamma\in\Gamma_m$ is a finite non-empty subset of \(\Gamma_0\) defined inductively as follows: 
\begin{itemize}
    \item if $m = 0$, $\inter{\gamma} = \{\gamma\}$
    \item if $m > 0$ and \(\gamma\coloneqq((T_0, \varphi_0), \ldots, (T_{u-1}, \varphi_{u-1}))\), then $\inter{\gamma} = \bigcup_{\zeta \in \ell(T_{u-1})} \inter{\zeta}$
\end{itemize}
\end{definition}


\begin{definition}\label[definition]{def:over}
For \(n\in\NN\), a set~$F \subseteq \Gamma_0$ is \emph{over} \(n\) if for every~$g \in F$, $\dom g \subseteq ]n, \infty[$.
By extension, for \(m, n\in\NN\), we say \(\gamma\in\Gamma_m\) is \emph{over \(n\)} if \(\inter\gamma\) is over \(n\).
\end{definition}

    

Based on the interpretation of a computation path~$\gamma \in \Gamma_m$ as a subset of~$\Gamma_0$,
a $\Gamma_m$-approximation is a $\Delta^0_2$-approximation of a list of finite subsets of~$\Gamma_0$,
for which the finite mind changes is ensured by an increasing sequence in a well-founded tree, or equivalently as a  decreasing sequence of ordinals.

\begin{definition}\label[definition]{def:gamma-m-approximation}
    For \(m\in\NN\), a \emph{\(\Gamma_m\)-approximation} is a function \(f:\NN\times\NN\to\Gamma_m\) (for some \(m\)) if for all \(n\in\NN\)
    \begin{itemize}
        \item \(f(n, 0)=\zeta_m\)
        \item \(f(n, -)\) is non-decreasing 
        \item \(\forall s\in\NN, f(n,s)\text{ is over }n\) 
    \end{itemize}
    We also define its interpretation as 
    \begin{align*}
        \inter{f}:\NN\times\NN &\to\mathfrak{P}_\text{fin}(\Gamma_0)\\
        n, s &\mapsto\inter{f(n, s)}
    \end{align*}
\end{definition}

It is often useful to consider enumerations of Turing functionals which furthermore satisfy some decidable syntactic constraint. We shall see that one can computably list all the Turing functionals behaving as $\Gamma_m$-approximations.

\begin{definition}
    For any \(m\in\NN\), a Turing functional \(\Psi\) is a \emph{\(\Gamma_m\)-functional} if and only if it is total and \(\Psi^X\) is a \(\Gamma_m\)-approximation for every oracle \(X\).
\end{definition}
\begin{lemma}\label{gamma-functional}
    For every \(m\in\NN\) and every Turing functional \(\Xi\), there is a \(\Gamma_m\)-functional \(\Psi\) such that, for any oracle \(X\), if \(\Xi^X\) is a \(\Gamma_m\)-approximation, then \(\Psi^X\) has the same limit function.
\end{lemma}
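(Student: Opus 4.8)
The plan is to \emph{trim} $\Xi$ into a $\Gamma_m$-functional $\Psi$ by a monotone construction that, on each column $n$, follows $\Xi^X(n,-)$ for as long as it behaves like a $\Gamma_m$-approximation, and otherwise freezes the output. The one delicate point is totality: $\Psi^X(n,s)$ must halt for every oracle $X$, even when $\Xi^X(n,s)\uparrow$. I would handle this by letting $\Psi$ \emph{catch up slowly}, so that $\Psi^X(n,s)$ depends only on $s$ steps of $\Xi^X$-computation and hence always converges, while over increasing $s$ it tracks the limit of $\Xi^X(n,-)$ whenever the latter really is a $\Gamma_m$-approximation. Before the construction I would record a routine bookkeeping fact: for each fixed $m$, under a suitable effective coding of $\Gamma_m$, membership of a code in $\Gamma_m$, the order $\preccurlyeq_m$, the computation of $\inter{\gamma}\subseteq\Gamma_0$, and the predicate ``$\gamma$ is over $n$'' are all uniformly decidable/computable in $m$; this follows by induction on $m$ from the base case $\Gamma_0=\{f:\NN\to 3:\dom f\text{ finite}\}$, where everything is patently decidable. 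I would also note that $\zeta_m$ is over every $n$ (since $\inter{\zeta_m}$ is the singleton of the empty map, by a trivial induction) and that $\zeta_m$ is the $\preccurlyeq_m$-least element of $\Gamma_m$ (it is the root).

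The construction: set $\Psi^X(n,0)\coloneqq\zeta_m$, and for $s\geq 1$, run $\Xi^X(n,t)$ for at most $s$ steps for each $t\leq s$; let $C_s$ be the set of $t\leq s$ on which this halts, and let $\ell_s$ be the largest $\ell$ such that $\{0,\dots,\ell\}\subseteq C_s$ and the sequence $\Xi^X(n,0),\dots,\Xi^X(n,\ell)$ is \emph{good}, meaning that each $\Xi^X(n,t)$ codes an element of $\Gamma_m$ which is over $n$, that $\Xi^X(n,0)=\zeta_m$, and that $\Xi^X(n,t)\preccurlyeq_m\Xi^X(n,t+1)$ for all $t<\ell$. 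If no such $\ell\geq 0$ exists, set $\Psi^X(n,s)\coloneqq\zeta_m$; otherwise set $\Psi^X(n,s)\coloneqq\Xi^X(n,\ell_s)$. Since only $s$ steps are simulated, $\Psi^X(n,s)$ always halts, so $\Psi$ is total on every oracle.

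Then I would verify the two required properties. First, $\Psi$ is a $\Gamma_m$-functional: for any $X$ and $n$ we have $\Psi^X(n,0)=\zeta_m$, and every value is either $\zeta_m$ or a good value, hence lies in $\Gamma_m$ and is over $n$; for monotonicity, note $C_s\subseteq C_{s+1}$ (a computation halting within $s$ steps halts within $s+1$ steps) and that goodness of a finite sequence does not depend on the stage, so $\ell_s\leq\ell_{s+1}$, and since good sequences are $\preccurlyeq_m$-non-decreasing while $\zeta_m$ is $\preccurlyeq_m$-least, $\Psi^X(n,s)\preccurlyeq_m\Psi^X(n,s+1)$. Thus $\Psi^X$ is a $\Gamma_m$-approximation for every $X$. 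Second, agreement of limits: if $\Xi^X$ is a $\Gamma_m$-approximation, every $\Xi^X(n,t)$ halts and the whole sequence $\Xi^X(n,-)$ is good, so for every $K$ there is $s\geq K$ with $\Xi^X(n,0),\dots,\Xi^X(n,K)$ all halting within $s$ steps, giving $\ell_s\geq K$; hence $\ell_s\to\infty$, so $\Psi^X(n,s)=\Xi^X(n,\ell_s)$ for all large $s$. Since $\Gamma_m$ is well-founded (\Cref{gamma-well-founded}), the non-decreasing sequence $\Xi^X(n,-)$ is eventually constant, equal to its limit $L(n)$, and therefore $\Psi^X(n,s)\to L(n)$ as well; so $\Psi^X$ and $\Xi^X$ have the same limit function.

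The main obstacle is exactly this totality requirement: one cannot simply copy $\Xi^X(n,s)$ and freeze at the first detected violation, since an honest-looking but divergent computation would leave $\Psi^X(n,s)$ undefined — hence the ``bounded search of size $s$ plus slow catch-up'' device. The only other point requiring care is the first-paragraph bookkeeping, namely the inductive verification that the $\Gamma_m$-level relations are uniformly decidable and $\inter{\cdot}$ uniformly computable; this is tedious but entirely routine.
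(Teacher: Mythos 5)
Your proof is correct and uses essentially the same idea as the paper's: trim $\Xi$ into a total functional by simulating $\Xi^X$ with a time bound that grows with $s$, and at each stage follow the longest initial segment of $\Xi^X(n,\cdot)$ that has converged and still looks like a $\Gamma_m$-approximation. The paper's construction is a recursion that, at stage $t$, takes the largest $s<t$ with $\Xi^X(n,s)[t]\downarrow$ strictly above the previously chosen value, whereas you track a growing ``good prefix'' via $\ell_s$; these are cosmetically different implementations of the same slow-catch-up device. If anything, your version is slightly more careful in explicitly insisting that each followed value lies in $\Gamma_m$ and is over $n$ (the paper's one-line condition $\Xi^X(n,s)[t]\succ\Psi^X(n,t-1)$ leaves this implicit), and in recording the routine but necessary fact that membership in $\Gamma_m$, the order $\preccurlyeq_m$, the map $\inter{\cdot}$, and ``over $n$'' are uniformly decidable/computable, which is what makes $\Psi$ a Turing functional at all.
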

\begin{proof}
The functional \(\Psi\) proceeds as follows. Fix an oracle \(X\) and some \(n\in\NN\). Define \(\Psi^X(n, 0)\coloneqq\zeta_m\). Now to define \(\Psi^X(n, t)\) for \(t>0\), consider \(s<t\) the biggest integer (if it exists) such that \(\Xi^X(n, s)[t]\downarrow\) and such that \(\Xi^X(n, s)[t]\succ\Psi^X(n, t-1)\). If \(s\) exists then \(\Psi^X(n, t)\coloneqq\Xi^X(n, s)[t]\). Otherwise, \(\Psi^X(n, t)\coloneqq\Psi^X(n, t-1)\).
\end{proof}


As mentioned, a $\Gamma$-approximation should be thought of as a $\Delta^0_2$-approximation of sequences of finite subsets of~$\Gamma_0$ of a special shape, and such that the minimum of the support of its elements is unbounded. 
We are now ready to define the notion of $\Gamma$-hyperimmunity. Given a $\Gamma$-approximation, it is sufficient to be compatible with any partial function of the limit to satisfy it.

\begin{definition}
   A function \(f:\NN\to 3\) (potentially partial) is \emph{compatible with} a set $F \subseteq \Gamma_0$ if $f$ extends some element in~$F$, \ie \(\exists g\in F, \dom f \supseteq \dom g\text{ and }f\uh_{\dom g} = g\). By extension, for \(m\in\NN\), a partial 3-coloring \(f\) is \emph{compatible with} \(\gamma\in\Gamma_m\), if it is compatible with \(\inter\gamma\).
\end{definition}

\begin{definition}\label[definition]{def:gamma-hyperimmunity}
    A 3-coloring \(f\) is \emph{\(\Gamma\)-hyperimmune} relative to \(D\subseteq\NN\) if, for every \(m\in\NN\) and for every \(D\)-computable \(\Gamma_m\)-approximation \(g\), there is an \(n\in\NN\) such that \(f\) is compatible with \(\lim_s g(n, s)\).
\end{definition}

The following lemma shows that $\Gamma$-hyperimmunity is a stronger notion than hyperimmunity. It simply comes from the fact that a computable list of elements of~$\Gamma_0$ with unbounded minimum support is a particular case of $\Gamma_0$-approximation.

\begin{lemma}\label{gamma-impl-hyp}
    If a 3-coloring \(f\) is \(\Gamma\)-hyperimmune relative to \(D\subseteq\NN\), then it is also hyperimmune relative to \(D\).
\end{lemma}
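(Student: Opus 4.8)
The plan is to unwind the definitions and exhibit any hyperimmunity witness for $f$ as arising from a particular $\Gamma_0$-approximation. Recall that $f$ is hyperimmune relative to $D$ means: for every $D$-computable sequence $g_0, g_1, \dots$ of partial functions from $\NN$ to $3$ with finite support such that $\min \dom g_n > n$ for all $n$, there is some $N \in \NN$ such that $g_N$ is compatible with $f$ (i.e.\ $f$ extends $g_N$). So I would fix such a $D$-computable sequence $(g_n)_{n \in \NN}$ and build from it a $D$-computable $\Gamma_0$-approximation $g : \NN \times \NN \to \Gamma_0$.

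Concretely, $\Gamma_0$ is the set of finite-support functions $f : \NN \to 3$, viewed as a tree of depth $1$ with root $\zeta_0$ the empty map and every nonempty such function an immediate child of the root. A $\Gamma_0$-approximation must satisfy $g(n,0) = \zeta_0$, $g(n,-)$ non-decreasing in $\preccurlyeq_0$, and $g(n,s)$ over $n$ for every $s$. Since $\preccurlyeq_0$ is the tree order of a depth-$1$ tree, "non-decreasing" just means the sequence $s \mapsto g(n,s)$ is $\zeta_0$ up to some point and then constantly equal to a single child $g_n$; so I set $g(n,0) = \zeta_0$ and $g(n,s) = g_n$ for all $s \geq 1$. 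This is over $n$ precisely because $\dom g_n \subseteq\, ]n, \infty[$, which follows from $\min \dom g_n > n$. And it is $D$-computable because $(g_n)_n$ is. Note the interpretation: $\inter{g(n,s)} = \{g_n\}$ for $s \geq 1$ and $\{\zeta_0\}$ for $s = 0$ (using that for $m = 0$, $\inter{\gamma} = \{\gamma\}$), so $\lim_s \inter{g(n,s)} = \{g_n\}$.

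Now apply $\Gamma$-hyperimmunity of $f$ relative to $D$ to this $g$ with $m = 0$: there is $n \in \NN$ such that $f$ is compatible with $\lim_s g(n,s) = g_n$, i.e.\ $f$ extends $g_n$. Taking $N := n$ witnesses the conclusion of hyperimmunity for the sequence $(g_n)_n$. Since $(g_n)_n$ was an arbitrary $D$-computable sequence of finite-support partial $3$-valued functions with $\min \dom g_n > n$, this shows $f$ is hyperimmune relative to $D$.

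There is no real obstacle here — the only mild care needed is matching the two slightly different phrasings of "hyperimmune relative to $D$" used in the paper (the array-of-disjoint-sets formulation and the completion-of-some-$g_n$ formulation), and confirming that the "finite mind changes" machinery of general $\Gamma_m$-approximations degenerates, at level $m = 0$, to an eventually-constant single choice per column, so that a plain $D$-computable list really is a legitimate $\Gamma_0$-approximation. I would spell this degeneration out in one sentence and the rest is bookkeeping.
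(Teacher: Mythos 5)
Your proof is correct and takes essentially the same approach as the paper: both construct a $\Gamma_0$-approximation by setting $g(n,0)=\zeta_0$ and $g(n,s)$ equal to the $n$-th finite-support partial function for $s\geq 1$, then apply $\Gamma$-hyperimmunity directly. The only cosmetic difference is that the paper starts from the array-of-disjoint-sets phrasing and builds the partial function $\gamma_n$ from it, whereas you start from the equivalent partial-function phrasing directly.
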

\begin{proof}
    Let \((F_{n, 0}, F_{n, 1}, F_{n, 2})_{n\in\NN}\) be a \(D\)-computable sequence of mutually disjoint finite sets \((F_{n, 0}, F_{n, 1}, F_{n, 2})_{n\in\NN}\) such that \(\min\bigcup_{j<3} F_{n, j}>n\).

    For any \(n\in\NN\), consider we define a 3-coloring with finite support 
    \[\gamma_n:x\mapsto\begin{dcases}
            j&\text{if }x\in F_{n, j}\\
            \uparrow&\text{otherwise}
        \end{dcases}\]
    It is well defined since the finite sets \(F_{n, j}\) are mutually disjoint. And consider the function
    \begin{align*}
        g :\ &\NN^2\to\Gamma_0\\
            &n, s\mapsto
            \begin{dcases}
                \emptyset&\text{if }s=0\\
                \gamma_n&\text{otherwise}
            \end{dcases}
    \end{align*}
    It is a \(D\)-computable \(\Gamma_0\)-approximation, thus by \(\Gamma\)-hyperimmunity of \(f\) there is \(N\in\NN\) such that \(f\) is compatible with \(\lim_s g(N, s)\), \ie \(f\) extends \(\gamma_N\), \ie \(\forall j<3, F_{N, j}\subseteq f^{-1}(j)\). 
\end{proof}

Liu~\cite{liu2023coding} proved the following lemma, which we reprove with more details.

\begin{proposition}[{Liu~\cite[Lemma 4.17]{liu2023coding}}]\label[proposition]{exists-gamma-hyp-coloring}
    There is a \(\Delta^0_2\) coloring which is \(\Gamma\)-hyperimmune.
\end{proposition}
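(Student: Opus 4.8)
The plan is to construct the coloring $f : \NN \to 3$ by finite extension (effectivized Cohen forcing) relative to $\emptyset'$, so that $f$ is $\Delta^0_2$ and meets, for every index $e$ of a potential $\Gamma_m$-functional and every $m$, a requirement forcing compatibility with $\lim_s \Psi_e^\emptyset(n,s)$ for some $n$. By \Cref{gamma-functional}, it suffices to consider, for each $m$ and each Turing functional $\Xi$, the associated $\Gamma_m$-functional $\Psi$; since every $\Gamma_m$-functional is total and $\Psi^\emptyset$ is genuinely a $\Gamma_m$-approximation, the relevant object $g \coloneqq \Psi^\emptyset$ is a fixed computable $\Gamma_m$-approximation. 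So we fix an enumeration of all pairs $(m, \Psi)$ where $\Psi$ is a $\Gamma_m$-functional (this enumeration is computable), and we build an increasing sequence of $3$-valued strings $\tau_0 \prec \tau_1 \prec \cdots$ with $f = \bigcup_s \tau_s$, handling one pair $(m,\Psi)$ at each stage.

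The heart of the argument is the single-requirement step: given a current condition $\tau_s$ and a computable $\Gamma_m$-approximation $g$, find an extension $\tau_{s+1} \succcurlyeq \tau_s$ and some $n$ such that $\tau_{s+1}$ is already compatible with $\lim_t g(n,t)$ — equivalently, such that $\tau_{s+1}$ extends some element of $\inter{\lim_t g(n,t)}$. The key point is that $\lim_t g(n,t)$ is a limit along an increasing path in the well-founded tree $\Gamma_m$ (\Cref{gamma-well-founded}), hence is reached after finitely many mind changes; and its interpretation is a finite nonempty subset of $\Gamma_0$, all of whose members have support contained in $]n,\infty[$. The strategy is: choose $n$ larger than $|\tau_s|$ (so the domain $]n,\infty[$ of the functions in $\inter{g(n,t)}$ is disjoint from $\dom \tau_s$, hence no conflict can arise); then use $\emptyset'$ to compute $\lim_t g(n,t)$ — this is where the $\Delta^0_2$ (rather than computable) construction is essential, since deciding the limit of a $\Delta^0_2$-approximation is exactly a $\emptyset'$-task — extract a function $h \in \inter{\lim_t g(n,t)} \subseteq \Gamma_0$, and let $\tau_{s+1}$ be any finite extension of $\tau_s$ that also extends $h$ (possible precisely because $\dom h \subseteq \,]n,\infty[\,$ is disjoint from $\dom\tau_s$, and we are free to fill in the finitely many values of $h$ and anything in between). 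Then $f \supseteq \tau_{s+1}$ is compatible with $\lim_t g(n,t)$, satisfying the requirement for this pair, and the requirement is preserved by all further extensions.

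The main obstacle I anticipate is not really combinatorial but a matter of carefully justifying that all of this can be carried out $\emptyset'$-uniformly, so that $f \le_T \emptyset'$. Concretely: the enumeration of $\Gamma_m$-functionals must be genuinely computable (using \Cref{gamma-functional} to convert arbitrary functionals into $\Gamma_m$-functionals with the same limits, and checking that "$\Psi$ behaves as a $\Gamma_m$-approximation on oracle $\emptyset$" becomes decidable once we have forced totality via the normalization); and for a fixed $\Gamma_m$-approximation $g$ and input $n$, the limit $\lim_t g(n,t)$ must be $\emptyset'$-computable — this follows because $\Gamma_m$ is well-founded, so the increasing sequence $s \mapsto g(n,s)$ in $\Gamma_m$ stabilizes, and $\emptyset'$ can detect stabilization. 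Once these uniformities are in place, the construction of $\tau_0 \prec \tau_1 \prec \cdots$ runs $\emptyset'$-recursively, $f = \bigcup_s \tau_s$ is $\Delta^0_2$, and by the single-step lemma above $f$ is $\Gamma$-hyperimmune (relative to $\emptyset$, hence one takes $D = \emptyset$ in \Cref{def:gamma-hyperimmunity}). A minor point to check is that when $m$ ranges over all naturals we still only have countably many requirements, which is fine since for each $m$ there are only countably many Turing functionals.
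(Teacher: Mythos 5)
Your proposal is correct and takes essentially the same approach as the paper: an $\emptyset'$-effective finite-extension (Cohen-forcing) construction that, at each stage, fixes a computable $\Gamma_m$-approximation $g$, uses $\emptyset'$ and well-foundedness of $\Gamma_m$ to compute $\lim_t g(n,t)$ at an input $n \geq |\tau_s|$, and extends the current string by a member of the interpretation (which is supported above $n$, so no conflict arises). The only cosmetic difference is that you invoke \Cref{gamma-functional} explicitly to justify the computable enumeration of $\Gamma_m$-approximations, which the paper's proof glosses over more briefly; this is a reasonable clarification, not a different route.
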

\begin{proof}
    First, it is possible to computably list all computable \(\Gamma_m\)-approximations, where \(m\) is any integer. Indeed, given a computable partial order \((W, \preccurlyeq)\), the set of its computation paths is uniformly computable. This is because they are finite sequences composed of finite trees, so all the constraints listed in the definition can be computed. From there, the set of \(\Gamma_m\)-spaces is uniformly computable in~\(m\). Hence, we fix an enumeration \((\Phi_{e,m})_{e\in\NN}\) of all the computable \(\Gamma_m\)-approximations, for any~\(m\).
 
    We wish to construct \(f:\NN\to 3\) such that,
    for any \(e, m\in\NN\), the following requirement is satisfied.
    \[\R_{e,m}\coloneqq f\text{ is compatible with }\Phi_{e,m}\]

    Suppose we have so far constructed \(\rho\in 3^{<\NN}\), we now consider the \(\Gamma_m\)-approximation \(\Phi_{e,m}\), and we are going to use \(\emptyset'\) to find an integer \(s\) such that \(\Phi_{e,m}(|\rho|, s)\) is the limit value for \(\Phi_{e,m}(|\rho|, -)\). Then either \(\inter{\Phi_{e,m}(|\rho|, s)}=\emptyset\), in which case \(\R_{e,m}\) is satisfied. Or \(\inter{\Phi_{e,m}(|\rho|, s)}\neq\emptyset\), in which case \(\R_{e,m}\) is satisfied for \(\rho\cup\tau\) where \(\tau\in\Phi_{e,m}(|\rho|, s)\). Because, by definition of a \(\Gamma_m\)-approximation, \(\tau\) is over \(|\rho|\), \ie \(\min\dom\tau > |\rho|\)
    
    To find \(s\in\NN\), build a sequence of integers, starting with \(s_0\coloneqq 0\). If \(s_k\) is the latest integer we have defined, then use \(\emptyset'\) to know whether \(\exists y>s_k, \Phi_{e,m}(|\rho|, y)\succ_m \Phi_{e,m}(|\rho|, s_k)\).
    If the answer is yes then \(s_{k+1}\) is defined by such a witness, otherwise we stop defining the sequence and \(s\coloneqq s_k\).

    The above procedure must end at some point, because \(\Phi_{e,m}(|\rho|, s_0)\prec_m \Phi_{e,m}(|\rho|, s_1)\prec_m\ldots\) is a strictly increasing sequence in the space \(\Gamma_m\), which is well-founded (by \Cref{gamma-well-founded}). And so \(\Phi_{e,m}(|\rho|, s)\) is the limit value we were looking for, because the case that defines \(s\) ensures that \(\forall y>s, \Phi_{e,m}(|\rho|, y)=\Phi_{e,m}(|\rho|, s)\).
\end{proof}

The following two theorems by Liu state that $\WKL$ and $\CC$ both admit preservation of $\Gamma$-hyperimmunity.
Their proofs are quite technical but essentially follow from the above combinatorics of the cross-constraint problem. We therefore do not include them.

\begin{theorem}[{Liu~\cite[Lemma 4.18]{liu2023coding}}]
    Let \(f:\NN\to 3\) be \(\Gamma\)-hyperimmune. For every non-empty $\Pi^0_1$ class~$\P \subseteq 2^\NN$, there is a member \(X \in \P\) such that \(f\) is \(\Gamma\)-hyperimmune relative to~\(X\).
\end{theorem}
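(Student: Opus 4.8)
The plan is to prove this by forcing with non-empty $\Pi^0_1$ subclasses of $\P$, in the style of the cone avoidance and non-$\Sigma^0_1$ preservation basis theorems above, but with the combinatorial core replaced by the $\Gamma$-space machinery of \Cref{sect:combi-lemmas}. A condition is (a computable tree defining) a non-empty $\Pi^0_1$ class $\Q\subseteq\P$, and $\Q'$ extends $\Q$ when $\Q'\subseteq\Q$; a sufficiently generic filter produces a single $X\in\P$. By \Cref{gamma-functional} it suffices to meet, for every $m\in\NN$ and every Turing index $e$, the requirement
\[\R_{e,m}:\ \exists n\in\NN,\ f\text{ is compatible with }\lim_s\Psi_{e,m}^X(n,s),\]
where $\Psi_{e,m}$ is the $\Gamma_m$-functional attached to $e$: indeed, every $X$-computable $\Gamma_m$-approximation shares its limit with some $\Psi_{e,m}^X$, so meeting all $\R_{e,m}$ makes $f$ be $\Gamma$-hyperimmune relative to $X$. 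One also keeps the defining trees computable at every step (see below), so that all the density arguments take place over $\emptyset$ and the hypothesis ``$f$ is $\Gamma$-hyperimmune'' may be applied throughout.

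The core is the density of the conditions forcing $\R_{e,m}$ below a given $\Q$ with computable pruned tree $T$. The idea is to build, uniformly computably, a $\Gamma_{m'}$-approximation $h$ (for a suitable $m'\ge m$, with its own index $n$) that over-approximates the behaviour of $\Psi_{e,m}^X$ along the members of $\Q$: the computation path $h(n,-)$ on $\Gamma_m$ tracks, branch by branch along $T$, the value $\Psi_{e,m}^{\xi}(n,s)$ computed from the $|\xi|$ bits of a node $\xi\in T$, descending to the children of $\xi$ in $T$ whenever this value ceases to converge, and extending a leaf by a fresh child carrying the new value whenever it climbs in $\Gamma_m$. Since $\Psi_{e,m}$ is a $\Gamma_m$-functional, each tracked value is over $n$ and is the same for all $X\succcurlyeq\xi$, so the interpretation of $\lim_s h(n,s)$ captures $\bigcup_{X\in\Q}\inter{\lim_s\Psi_{e,m}^X(n,s)}$, and the leaf carrying a value $\gamma$ with $f$ compatible with $\inter\gamma$ corresponds to a node $\xi\in T$ with $\lim_s\Psi_{e,m}^X(n,s)=\gamma$ for every $X\in\Q$ extending $\xi$. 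Granting that $h$ is a legitimate computable $\Gamma_{m'}$-approximation, $\Gamma$-hyperimmunity of $f$ yields an $n$ with $f$ compatible with $\lim_s h(n,s)$; reading off the witnessing leaf produces such a node $\xi$, and restricting $\Q$ to $[\xi]\cap\Q$ — a non-empty $\Pi^0_1$ class with computable tree — gives an extension forcing $\R_{e,m}$.

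The main obstacle is exactly the clause ``granting that $h$ is legitimate'': one must ensure that each $h(n,-)$ is a \emph{well-founded}, hence finite, computation path, i.e.\ that the mind changes of the value tracked along $T$ are bounded. Per branch the climbs in $\Gamma_m$ are bounded by \Cref{gamma-well-founded}, but the descents in $T$ are \emph{a priori} unbounded, because $\lim_s\Psi_{e,m}^X(n,-)$ may depend on arbitrarily deep information about $X$; this is precisely where one cannot track all of $\Q$ naively. Controlling it requires the technical extension lemmas \Cref{ext1,ext2} together with a compactness argument in the spirit of \Cref{konig-sufficient}: one shows that, for the purpose of meeting $\R_{e,m}$, it is enough to track $T$ up to a level extracted by König's lemma from the failure of a suitable $\Sigma^0_1$ ``$T$-sufficiency''-type statement, so that only finitely many descents are ever performed and $h(n,-)$ terminates. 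This is the point at which the cross-constraint combinatorics are reused, and it is what makes Liu's proof delicate; the remaining bookkeeping — checking the compatibility and monotonicity constraints of \Cref{def:computation-path}, that $h$ satisfies \Cref{def:gamma-m-approximation}, and that the restricted classes keep computable trees — is routine.
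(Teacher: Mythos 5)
Your overall plan is reasonable and matches the shape of the paper's own proof of \Cref{thm:coh-preserves-gamma} for $\COH$: force with $\Pi^0_1$ subclasses, cook a computable $\Gamma$-approximation $h$ out of the functional and the tree, invoke $\Gamma$-hyperimmunity of $f$ on $h$, and read off an extension from the witness. You also correctly locate the crux: showing that the tree-labeled object $h$ you describe is a \emph{bona fide} $\Gamma_{m'}$-approximation, i.e.\ that each $h(n,-)$ is built by legal one-step variations of a computation path. But that is precisely where the proposal stops being a proof. You never say what $m'$ is, you never describe which one-step variations the construction actually performs (in particular how the non-decreasing labeling, the injectivity/embedding implicit in \Cref{gamma-well-founded}, and the strictly-shrinking-siblings constraint of \Cref{def:one-step-variation} are respected as deeper levels of $T$ are explored and new siblings appear), and you never argue that the witnessing leaf stays unextended, which is what the claimed identity $\lim_s\Psi_{e,m}^X(n,s)=\gamma$ for all $X\in\Q\cap[\xi]$ rests on. These are not bookkeeping; they are the content of the lemma.

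The specific tools you invoke to close the gap are also the wrong ones. \Cref{ext1} and \Cref{ext2} are about maps $3^n\to 3^m$ that preserve complete incompatibility, and \Cref{konig-sufficient} is about $T$-sufficiency of sets of condition-tuples for cross-trees in $\X=3^\NN\times(2^\NN)^r$; all three are tailored to the cross-constraint setting, where the asymmetry between the $3$-valued left coordinate and the $2$-valued right coordinates is what makes them go through (see the remark after \Cref{delta-basis}). A $\Pi^0_1$ class in $2^\NN$ has neither a left/right split nor an incompatibility constraint to preserve, so there is nothing for those lemmas to bite on. If you want to make the plan precise you need a self-contained argument that the mind-change structure of $\Psi_{e,m}^\xi(n,\cdot)$ as $\xi$ ranges over $T$ can be packaged into a single $\Gamma_{m+1}$-approximation (say), including a bound on how often the construction revises its guess along any one branch of the computation tree; the well-foundedness of $\Gamma_m$ (\Cref{gamma-well-founded}) controls climbs along a fixed branch of $T$, but you must also control the splitting introduced by $T$ itself, and that requires a genuinely new argument rather than a pointer to the cross-constraint lemmas.
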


\begin{theorem}[{Liu~\cite[Lemma 4.2]{liu2023coding}}]\label[theorem]{thm:cc-preservation-gamma}
    Let \(f:\NN\to 3\) be \(\Gamma\)-hyperimmune. For every computable instance of \(\CC\), there is a solution \(X\) such that \(f\) is \(\Gamma\)-hyperimmune relative to~\(X\).
\end{theorem}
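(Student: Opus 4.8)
The plan is to run the same forcing as in the cross-constraint cone avoidance and non-$\Sigma^0_1$ preservation basis theorems (\Cref{cone-avoidance-basis} and \Cref{sigma01-basis}), changing only the invariant carried by the reservoir. I would force with conditions $\left((\rho^i,\sigma^i)_{i<2}, U, B\right)$ where $U$ is a $B$-computable cross-subtree of the given computable instance $T$, $(\rho^i,\sigma^i)_{i<2}$ is a condition-tuple for $[U]$, and $B\subseteq\NN$ is such that $f$ remains $\Gamma$-hyperimmune relative to $B$; extension is defined exactly as before. The structural part of the argument — that a sufficiently generic filter through an initial condition excluding a maximal number of components yields a genuine solution $(X^i,Y^i)_{i<2}$ of $\CC$, with $(X^0,X^1)$ finitely compatible, each $(Y^0_s,Y^1_s)$ infinitely compatible, and all sequences infinite — is handled verbatim via \Cref{left-full-ext} and \Cref{lem:delta-basis-excludes}. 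So the only new work is a single density lemma: for every condition $c=\left((\rho^i,\sigma^i)_{i<2},U,B\right)$ and every $\Gamma_m$-functional $\Psi$, some extension of $c$ forces the requirement
\[ \R_\Psi:\quad \exists n,\ f\text{ is compatible with }\lim_s\Psi^{(X^0,Y^0)\oplus(X^1,Y^1)}(n,s). \]
By \Cref{gamma-functional} it suffices to range over a fixed enumeration of $\Gamma_m$-functionals (over all $m$), and meeting all the $\R_\Psi$ gives the conclusion.

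To prove the density lemma, fix $c$ and a $\Gamma_m$-functional $\Psi$; abbreviate $X=(X^0,Y^0)\oplus(X^1,Y^1)$. For $n\in\NN$ and $\gamma\in\Gamma_m$ over $n$, set
\[ A_{n,\gamma}=\bigl\{(\widehat\rho^i,\widehat\sigma^i)_{i<2}\in\X_{<\NN}^2:\ \exists s,\ \Psi^{(\widehat\rho^0,\widehat\sigma^0)\oplus(\widehat\rho^1,\widehat\sigma^1)}(n,s)\succcurlyeq_m\gamma\bigr\}. \]
Each $A_{n,\gamma}$ is $\Sigma^0_1$ uniformly in $(n,\gamma)$ and closed under suffix, and since $\Psi^X(n,-)$ is non-decreasing one has $A_{n,\gamma}\subseteq A_{n,\gamma'}$ when $\gamma\succcurlyeq_m\gamma'$. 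Put
\[ Q=\bigl\{(n,\gamma):\ A_{n,\gamma}\text{ is }U\text{-sufficient over }(\rho^i,\sigma^i)_{i<2}\bigr\}, \]
which is $\Sigma^0_1(U)$, hence $\Sigma^0_1(B)$; by the monotonicities above and monotonicity of $U$-sufficiency, for each $n$ the section $Q_n=\{\gamma:(n,\gamma)\in Q\}$ is (essentially) a downward-closed subtree of $\Gamma_m$, all of whose nodes are over $n$, and it is well-founded by \Cref{gamma-well-founded}.

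The heart of the proof — and the one genuinely nonroutine step — is to repackage the uniformly-$\Sigma^0_1(B)$ family $(Q_n)_n$ of well-founded subtrees of $\Gamma_m$ as a single $B$-computable $\Gamma_{m+1}$-approximation $h$, the one extra $\Gamma$-level accounting exactly for the enumeration layer that turns the $\Sigma^0_1$ object $Q$ into a $\Delta^0_2$ one. The design goal is that $\inter{\lim_s h(n,s)}$ faithfully records the finite-support maps $\tau$ over $n$ that some condition-tuple extension of $(\rho^i,\sigma^i)_{i<2}$ for $[U]$ forces into $\inter{\lim_s\Psi^X(n,s)}$, with $\lim_s h(n,s)=\zeta_{m+1}$ (so $\inter{\lim_s h(n,s)}=\{\emptyset\}$) marking coordinates on which no extension forces $\Psi$ past $\zeta_m$. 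This is precisely where the backtracking clause of \Cref{def:one-step-variation} is indispensable: as we enumerate $Q_n$ we reach nodes $\gamma$ and put the maps of $\inter\gamma$ on the table of $h(n,-)$, but a later node $\gamma'\in Q_n$ may be seen to be forceably reached as well with $\inter{\gamma'}$ incomparable to $\inter\gamma$, so some maps must be withdrawn from $h(n,-)$; the number of withdrawals along each coordinate is bounded by the well-foundedness of $\Gamma_m$, which is exactly why one more $\Gamma$-level suffices and $h$ is a legitimate $\Gamma_{m+1}$-approximation (\Cref{gamma-functional} can be invoked to legalize it). I expect getting this simulation right — and, relatedly, checking that the coordinate and map later produced by $\Gamma$-hyperimmunity always correspond to an honest forcing move — to be the main obstacle; this is the combinatorial core for which the $\Gamma$-spaces of \Cref{def:gamma-spaces} were tailored.

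Granting such an $h$, the density lemma finishes as follows. If some condition-tuple extension of $(\rho^i,\sigma^i)_{i<2}$ for $[U]$ forces $\lim_s\Psi^X(n,s)=\zeta_m$ for some $n$, take it: the resulting condition forces $\R_\Psi$ since $\inter{\zeta_m}=\{\emptyset\}$ and $f$ vacuously extends the empty map. Otherwise, apply $\Gamma$-hyperimmunity of $f$ relative to $B$ (legitimate, as $B$ is a parameter of $c$) to the $B$-computable $\Gamma_{m+1}$-approximation $h$: there are $n$ and $\tau\in\inter{\lim_s h(n,s)}$ with $f$ extending $\tau$, together with a $\Sigma^0_1$, suffix-closed, $U$-sufficient set witnessing that $\tau$ is forceable into $\inter{\lim_s\Psi^X(n,s)}$; by \Cref{sufficient-imply-extendible} this set contains a condition-tuple $(\widehat\rho^i,\widehat\sigma^i)_{i<2}$ for $[U]$ extending $(\rho^i,\sigma^i)_{i<2}$, and $\left((\widehat\rho^i,\widehat\sigma^i)_{i<2},U,B\right)$ forces $\R_\Psi$. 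A sufficiently generic filter meeting all these density requirements, and the ones of \Cref{left-full-ext} and \Cref{lem:delta-basis-excludes} used for the $\CC$-structure, then yields a solution $(X^i,Y^i)_{i<2}$ with $f$ still $\Gamma$-hyperimmune relative to $(X^0,Y^0)\oplus(X^1,Y^1)$.
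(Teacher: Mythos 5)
The paper does not reproduce Liu's proof of this theorem, so I am judging your outline on its own terms rather than against a reference argument in the text.

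There is a genuine gap at the very last step, and it is not a mere technicality: the extension you build does not force $\R_\Psi$. Your set $A_{n,\gamma}$ collects stems $(\widehat\rho^i,\widehat\sigma^i)_{i<2}$ on which the approximation \emph{reaches} $\gamma$ at some stage $s$, i.e.\ $\Psi^{(\widehat\rho^0,\widehat\sigma^0)\oplus(\widehat\rho^1,\widehat\sigma^1)}(n,s)\succcurlyeq_m\gamma$. Extending the stem into $A_{n,\gamma}$ only forces $\lim_s\Psi^X(n,s)\succcurlyeq_m\gamma$. Since the interpretation map is \emph{not} monotone along $\preccurlyeq_m$ --- the ``or'' clause of \Cref{def:one-step-variation} can withdraw leaves, so $\gamma'\succ_m\gamma$ does not imply $\inter{\gamma'}\supseteq\inter{\gamma}$ --- knowing that the limit lies above $\gamma$ tells you nothing about whether $\tau\in\inter{\gamma}$ survives into $\inter{\lim_s\Psi^X(n,s)}$, and hence nothing about whether $f$ is compatible with the limit. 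To actually force $\R_\Psi$ you also need to force that the computation \emph{never surpasses} $\gamma$, which is a $\Pi^0_1$-type statement that can only be forced by shrinking the reservoir tree (as in Case~1 of \Cref{cone-avoidance-basis-forcing-req} and \Cref{sigma01-basis-forcing-req}, where the extension passes to a subtree $S\subseteq U$). Your final condition $\left((\widehat\rho^i,\widehat\sigma^i)_{i<2},U,B\right)$ keeps $U$ unchanged, so the tree-shrinking half of the argument is missing, and maximality of $\gamma$ in $Q_n$ does not substitute for it: a single pair of paths in $[U]^2$ passing beyond $\gamma$ does not make $A_{n,\gamma'}$ $U$-sufficient, so maximality is consistent with the limit overshooting $\gamma$ on the very stem you selected. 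The same issue afflicts your ``first case'': forcing $\lim_s\Psi^X(n,s)=\zeta_m$ is again a $\Pi^0_1$ constraint requiring a reservoir change, not a stem extension.

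Two further points about the $\Gamma_{m+1}$-approximation $h$, which you rightly flag as the crux but leave unconstructed. First, the stated design goal --- that $\inter{\lim_s h(n,s)}$ ``faithfully records'' all $\tau$ over $n$ that some extension forces into $\inter{\lim_s\Psi^X(n,s)}$ --- cannot be met literally: that set of $\tau$'s may well be infinite (already for $m=0$, $Q_n$ can contain infinitely many immediate successors of $\zeta_0$), while $\inter{\lim_s h(n,s)}$ is a finite set by definition. You would need a weaker invariant, roughly: every $\tau\in\inter{\lim_s h(n,s)}$ is forceable, and $h$ does eventually commit to a nonempty such set whenever any forcing move is available. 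Second, because the notion you actually need to track is ``forceable as the final value'' rather than ``reachable at some stage'', the $\Sigma^0_1(B)$ set $Q$ you start from is simply the wrong object; the enumeration layer that $\Gamma_{m+1}$ is supposed to absorb must be built around the combination of stem-extension (reaching $\gamma$) and tree-shrinking (not surpassing $\gamma$), and the one-step-variation backtracking must be wired to the \emph{failure} of the latter, not merely to new enumerations into $Q_n$. As it stands the outline does not close the loop, and I would not accept it without this repaired and spelled out.
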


\subsection{\texorpdfstring{Preservation of \(\Gamma\)-hyperimmunity for~$\COH$}{Preservation of Gamma-hyperimmunity for COH}}

We now prove that $\COH$ admits preservation of $\Gamma$-hyperimmunity, which is a new result, and yields in particular that $\mathsf{D}^2_3$ is not computably reducible to $(\RT^2_2)^*$ (\Cref{separation2}).

\begin{theorem}\label[theorem]{thm:coh-preserves-gamma}
Let~$g \in 3^\NN$ be a \(\Gamma\)-hyperimmune function and $R_0, R_1, \dots$ be a uniformly computable sequence of sets.
Then there is an $\vec R$-cohesive set~$G$ such that $g$ is \(\Gamma\)-hyperimmune relative to~$G$.
\end{theorem}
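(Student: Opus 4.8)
The plan is to build $G$ by Mathias forcing, using computable reservoirs, exactly as in the standard proof that $\COH$ admits cone avoidance, but replacing the cone-avoidance density argument with a $\Gamma$-hyperimmunity density argument. A condition will be a pair $(F, X)$ where $F$ is a finite set, $X$ is an infinite computable set with $\max F < \min X$, and $X$ is included (up to finite changes) in a finite Boolean combination $R_0^{\epsilon_0} \cap \dots \cap R_{n-1}^{\epsilon_{n-1}}$ of the first $n$ sets, for some $n$ and some $\epsilon \in 2^n$. Extension is the usual Mathias extension. The structural requirements ensuring $G$ is $\vec R$-cohesive are met by the usual density argument: given a condition $(F, X)$ and $n$, one of $X \cap R_n$, $X \cap \overline{R}_n$ is infinite and computable, so we can extend the reservoir to decide $R_n$. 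The key point is that the reservoirs always remain \emph{computable}, so the generic $G$ will have the property that $G' \leq_T \emptyset'$-ish complexity is not the issue — what matters is that every $\Gamma_m$-functional applied to $G$ can be "tamed."

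\textbf{The computational requirements.} For each $m \in \NN$ and each $\Gamma_m$-functional $\Psi$ (listing these computably as in \Cref{gamma-functional}), we must force that $\Psi^G$, viewed as a $G$-computable $\Gamma_m$-approximation, fails to witness non-$\Gamma$-hyperimmunity; that is, we must ensure $g$ is compatible with $\lim_s \Psi^G(n,s)$ for some $n$. The density lemma is the heart of the proof: given a condition $(F, X)$, I want to find an extension forcing compatibility. The idea is that, since the reservoir $X$ is computable, the functional $\Psi^{F \cup (G \cap X)}$ as $G$ ranges over subsets of $X$ gives rise to a \emph{computable} family of $\Gamma_m$-approximations, indexed by finite subsets $E \subseteq X$ (the "new" part of the generic so far). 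More precisely, for each $n$, the limit $\lim_s \Psi^{F \cup E}(n,s)$ over all finite $E \subseteq X$ with $\min E > n$ can be organized into a single computable $\Gamma_{m'}$-approximation for some $m' = m'(m)$ — the extra layer of mind-changes (one per possible value of the finite set $E$ being "guessed" from $X$) is absorbed into moving up one level in the $\Gamma$-hierarchy, using that $X$ is computable so these guesses can be made effectively and that backtracking corresponds precisely to the one-step-variation "or" case. By $\Gamma$-hyperimmunity of $g$ (relative to $\emptyset$), $g$ is compatible with $\lim_s h(n,s)$ for some $n$ in this bigger approximation $h$, which unwinds to: there is a finite $E \subseteq X$ with $\min E > n$ such that $g$ is compatible with $\lim_s \Psi^{F \cup E}(n,s)$. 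Then $(F \cup E, X \cap (\max E, \infty))$ is the desired extension.

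\textbf{The main obstacle} is constructing the auxiliary $\Gamma_{m'}$-approximation $h$ and verifying it genuinely lies in some $\Gamma_{m'}$-space — i.e., that the mind-changes caused by "not yet knowing which finite $E \subseteq X$ the construction will pick" can be packaged as a legitimate computation path. This requires care: one must show that the natural process (enumerate candidate finite subsets $E$ of the computable reservoir, for each run $\Psi^{F\cup E}$, and reorganize) respects the tree structure of $\Gamma_{m+1}$ — in particular that each "discard a candidate $E$" step corresponds to the siblings-become-leaves "or" move, which strictly decreases a finite quantity, guaranteeing well-foundedness. This is precisely where the peculiar shape of $\Gamma$-spaces (as opposed to plain $\emptyset'$-approximations) is exploited, mirroring how it is used in Liu's proof that $\CC$ preserves $\Gamma$-hyperimmunity. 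Once this bookkeeping is set up, the verification that the generic $G$ is $\vec R$-cohesive and that $g$ remains $\Gamma$-hyperimmune relative to $G$ follows from genericity and the density lemmas in the standard way.
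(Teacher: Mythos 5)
Your overall scaffolding — Mathias forcing with computable reservoirs, one density requirement per $\Gamma_m$-functional $\Psi$, and using $\Gamma$-hyperimmunity of $g$ relative to $\emptyset$ to locate a good $n$ and then a good finite $E \subseteq X$ — is exactly the paper's. The divergence is in the heart of the density lemma, and it is where the gap lies. You propose to package the ``choice of which finite $E \subseteq X$ the generic will eventually pick'' into an extra layer of the $\Gamma$-hierarchy, building a $\Gamma_{m'}$-approximation for some $m' > m$, and you explicitly flag the verification that this construction genuinely lands in $\Gamma_{m'}$ as ``the main obstacle'' without carrying it out. That is the missing piece, and it is not a routine one: the interpretation of a $\Gamma_{m+1}$-element is a \emph{finite} union of $\Gamma_m$-interpretations (the leaves of the final tree in the computation path), and the ``or'' case of a one-step variation can only \emph{shrink} an existing finite sibling set. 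Since there are infinitely many finite $E \subseteq X$, you would need an argument that only finitely many candidates ever need to be tracked per $n$ and that discards line up with the sibling-shrinking constraint; you do not supply this, and it is not obvious.

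The paper avoids the issue entirely by working \emph{at the same level $m$}. In \Cref{coh-preservation-forcing-step} it builds a single computable $\Gamma_m$-approximation $f$: set $f(n,0) := \zeta_m$, and at stage $s+1$, if there is any finite $F' \subseteq X$ with $\max F' < s$ and some $t \leq s$ such that $\Psi^{F \cup F'}(n,t)\downarrow$ and $\Psi^{F \cup F'}(n,t) \succ f(n,s)$, set $f(n,s+1) := \Psi^{F \cup F'}(n,t)$; otherwise $f(n,s+1) := f(n,s)$. This greedy merge is monotone in the well-founded tree $\Gamma_m$, starts at $\zeta_m$, and stays over $n$, so it is a legitimate computable $\Gamma_m$-approximation; it stabilizes, $\Gamma$-hyperimmunity of $g$ yields an $n$ with $g$ compatible with $\lim_s f(n,s)$, and unwinding the definition recovers the finite $F' \subseteq X$ used in the Mathias extension $(F \cup F', X \smallsetminus \{0,\dots,\max F'\})$. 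The observation you are missing, and which makes the level-jump unnecessary, is that the approximations $\Psi^{F \cup E}(n,-)$ for varying finite $E \subseteq X$ all take values in the \emph{same} well-founded tree $\Gamma_m$ rooted at the \emph{same} $\zeta_m$, so one can merge them greedily along a single branch of $\Gamma_m$ with no backtracking at all.
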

\begin{proof}
    We proceed by forcing, using Mathias conditions \((F, X)\) such that \(X\) is computable.

    For a $\Gamma_m$-functional \(\Psi\), define the requirement \(\R_{\Psi, m}\coloneqq\) there is \(n\in\NN\) such that \(g\) is compatible with \(\lim_s \Psi^G(n, s)\).
    
    \begin{lemma}\label{coh-preservation-forcing-step}
        For each condition \((F, X)\), \(m\in\NN\), and $\Gamma_m$-functional \(\Psi\), there is an extension forcing \(\R_{\Psi, m}\)
    \end{lemma}
    \begin{proof}
        We define a computable \(\Gamma_m\)-approximation~$f : \NN^2 \to \Gamma_m$ as follows.
        First, for every $n$, $f(n,0)\coloneqq \zeta_m$.
        Suppose that at step $s$, we have defined $f(n,s)$ for every~$n$.
        Then for each \(n\), if there is some~$F' \subseteq X$ with~$\max F' < s$ and some~$t\leqslant s$ such that $\Psi^{F\cup F'}(n, t)\downarrow$ and $\Psi^{F  \cup F'}(n, t)\succ f(n,s)$, then let $f(n, s+1)\coloneqq \Psi^{F \cup F'}(n, t)$. Otherwise, let $f(n, s+1)\coloneqq f(n,s)$. Then go to the next stage.
        
        By construction, $f$ is a \(\Gamma_m\)-approximation. Since $g$ is \(\Gamma\)-hyperimmune, there is \(n\in\NN\) such that \(g\) is compatible with \(\lim_s f(n, s)\). Now by definition of \(f\), there is a finite (possibly empty) \(F'\subseteq X\) and \(t\in\NN\) such that \(\lim_s f(n, s)=\Psi^{F\cup F'}(n, t)\). We claim that $(F\cup F', X-\{0, \dots, \max F'\})$ forces $\R_{\Psi, m}$.
        Indeed, by construction of \(f\) and since we considered the limit, we have that, for every \(F''\subseteq X -\{0, \dots, \max F'\} \) and cofinitely many \(t'\in\NN\), \(\Psi^{F\cup F' \cup F''}(n, t')=\Psi^{F\cup F'}(n, t)\).
    \end{proof}

    Let~$\mathcal{F}$ be a sufficiently generic filter for computable Mathias forcing and let $G = \bigcup_{(F, X) \in \mathcal{F}} F$. By genericity, $G$ is $\vec{R}$-cohesive, since given a condition $(F, X)$ and a computable set~$R_x$, either~$(F, X \cap R_x)$ or $(F, X \cap \overline{R}_x)$ is a valid extension. By \Cref{coh-preservation-forcing-step}, for every~$m$ and $\Gamma_m$-functional~$\Gamma$, $g$ diagonalizes against $\Gamma^G$. Thus, by \Cref{gamma-functional}, \(g\) is \(\Gamma\)-hyperimmune relative to \(G\).
\end{proof}

\begin{corollary}\label{exists-ideal-coh}
    For every $\Gamma$-hyperimmune function $f : \NN \to 3$, there exists a cross-constraint ideal $\M \models \COH$
    such that $f$ is $\Gamma$-hyperimmune relative to every element of~$\M$.
\end{corollary}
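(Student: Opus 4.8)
The plan is to construct the desired cross-constraint ideal by an iterated forcing argument, exactly in the style of the corollaries following \Cref{cone-avoidance-basis}, \Cref{sigma01-basis}, and \Cref{low-basis}, but this time chasing two closure properties simultaneously: closure under solutions to $\CC$-instances, and closure under solutions to $\COH$-instances, all the while preserving $\Gamma$-hyperimmunity of~$f$ relative to every element of the ideal. First I would fix an enumeration of pairs $n = \langle k, e \rangle$ interleaving the two kinds of instances; concretely, I would build an increasing sequence $Z_0 \leq_T Z_1 \leq_T \dots$ starting from $Z_0 \coloneqq \emptyset$ such that at each step, if $\Phi_e^{Z_k}$ is a left-full cross-tree (a $\CC$-instance) then $Z_{n+1}$ computes a $\CC$-solution to it, and if $\Phi_e^{Z_k}$ codes a uniformly computable-in-$Z_k$ sequence of sets (a $\COH$-instance) then $Z_{n+1}$ computes an $\vec R$-cohesive set for it. In both cases I would maintain the invariant that $f$ is $\Gamma$-hyperimmune relative to~$Z_{n+1}$.

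The key steps, in order: (1) check that $f$ is $\Gamma$-hyperimmune relative to~$Z_0 = \emptyset$ — this is exactly \Cref{exists-gamma-hyp-coloring} together with the fact that such an $f$ is taken $\Gamma$-hyperimmune (relative to~$\emptyset$) by hypothesis; (2) at a $\CC$-step, apply \Cref{thm:cc-preservation-gamma} relativized to~$Z_n$ to the instance $\Phi_e^{Z_k}$, obtaining a $\CC$-solution $(X^i, Y^i)_{i<2}$ such that $f$ is $\Gamma$-hyperimmune relative to $(X^0,Y^0)\oplus(X^1,Y^1)\oplus Z_n$, and set $Z_{n+1}$ to be this join; (3) at a $\COH$-step, apply \Cref{thm:coh-preserves-gamma} relativized to~$Z_n$ to the sequence $\vec R$ coded by $\Phi_e^{Z_k}$, obtaining an $\vec R$-cohesive $G$ with $f$ still $\Gamma$-hyperimmune relative to $G \oplus Z_n$, and set $Z_{n+1} \coloneqq G \oplus Z_n$; (4) finally, let $\M \coloneqq \{ X \in 2^\NN : \exists n,\ X \leq_T Z_n \}$ and observe that $\M$ is a Turing ideal (it is closed downward under $\leq_T$ and closed under join since the $Z_n$ are increasing), that $\M \models \COH$ by the $\COH$-steps, that $\M$ is a cross-constraint ideal by the $\CC$-steps, and that $f$ is $\Gamma$-hyperimmune relative to every $X \in \M$ since $\Gamma$-hyperimmunity is downward closed under Turing reducibility (a $\Gamma_m$-approximation computable from~$X \leq_T Z_n$ is in particular computable from~$Z_n$).

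I do not expect any real obstacle here, since both preservation theorems needed have been stated in the excerpt (\Cref{thm:cc-preservation-gamma} for $\CC$ and \Cref{thm:coh-preserves-gamma} for $\COH$), and they relativize without difficulty — the point of stating them as preservation theorems was precisely to make this kind of iteration routine. The one place requiring a small word of care is the downward closure of $\Gamma$-hyperimmunity under Turing reducibility, which should be noted explicitly: if $f$ is $\Gamma$-hyperimmune relative to $D$ and $X \leq_T D$, then any $X$-computable $\Gamma_m$-approximation is $D$-computable, so $f$ diagonalizes against it, whence $f$ is $\Gamma$-hyperimmune relative to~$X$. With that remark in place, the construction goes through mechanically, and the corollary follows. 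Since the preservation theorems also need to be applied relativized (to~$Z_n$), I would also note that $f$ being $\Gamma$-hyperimmune relative to~$Z_n$ is exactly the hypothesis needed to invoke the relativized forms of \Cref{thm:cc-preservation-gamma} and \Cref{thm:coh-preserves-gamma}.
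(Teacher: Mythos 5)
Your proposal matches the paper's proof almost verbatim: the same iterated construction $Z_0 \leq_T Z_1 \leq_T \dots$ closed under $\CC$- and $\COH$-solutions via \Cref{thm:cc-preservation-gamma} and \Cref{thm:coh-preserves-gamma}, with $\M = \{X : \exists n,\ X \leq_T Z_n\}$. The only cosmetic difference is the bookkeeping of the enumeration (the paper uses triples $\langle i,k,e\rangle$ with $i<2$ flagging the problem type rather than "interleaving"), and your invocation of \Cref{exists-gamma-hyp-coloring} in step (1) is superfluous since the corollary already assumes $f$ is $\Gamma$-hyperimmune.
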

\begin{proof}
We construct a sequence of sets \(Z_0\leqslant_T Z_1\leqslant_T\ldots\) such that for any integer \(n=\langle i, k, e\rangle\), 
\begin{itemize}
    \item $f$ is $\Gamma$-hyperimmune relative to~\(Z_n\).
    \item if $n = \langle 0, k, e\rangle$, and if \(\Phi_e^{Z_k}\) is an instance of \(\CC\), then \(Z_{n+1}\) computes a solution.
    \item if $n = \langle 1, k, e\rangle$, and if \(\Phi_e^{Z_k}\) is an instance of \(\COH\), then \(Z_{n+1}\) computes a solution.
\end{itemize}

Define \(Z_0\coloneqq\emptyset\). Suppose \(Z_n\) has been defined.
If~$n = \langle 0, k, e \rangle$ and \(\Phi_e^{Z_k}\) is not a left-full cross-tree, then \(Z_{n+1}\coloneqq Z_n\). Otherwise, by \Cref{thm:cc-preservation-gamma} relativized to \(Z_n\), there is a pair of paths \(P_0\) and \(P_1\), such that $f$ is $\Gamma$-hyperimmune relative to~\(P_0\oplus P_1\oplus Z_n\). In which case \(Z_{n+1}\coloneqq P_0\oplus P_1\oplus Z_n\).
If~$n = \langle 1, k, e \rangle$ and \(\Phi_e^{Z_k}\) is not a countable sequence of sets, then \(Z_{n+1}\coloneqq Z_n\). Otherwise, by \Cref{thm:coh-preserves-gamma} relativized to \(Z_n\), there is a \(\Phi_e^{Z_k}\)-cohesive set $C$, such that $f$ is $\Gamma$-hyperimmune relative to~\(C \oplus Z_n\). In which case \(Z_{n+1}\coloneqq C \oplus Z_n\).

By construction, the class \(\M\coloneqq\{X\in 2^\NN:\exists n, X\leqslant_T Z_n\}\) is a cross-constraint ideal such that $f$ is $\Gamma$-hyperimmune relative to every element of~$\M$.
\end{proof}

\subsection{Reducibility results}\label[section]{sect:reducibility}

\begin{definition}
    A problem \(\Psf\) is \emph{strongly omnisciently computably reducible} to a problem \(\Qsf\), noted \(\Psf\leqslant_{soc}\Qsf\), if for every $\Psf$-instance~$X$, there exists a $\Qsf$-instance~$\widehat{X}$ such that, for every $\Qsf$-solution~$\widehat{Y}$ of~$\widehat{X}$, $\widehat{Y}$ computes a $\Psf$-solution to~$X$.
\end{definition}

Contrary to computable reduction, no effectiveness is imposed on the complexity of the $\Qsf$-instance $\widehat{X}$ to solve the $\Psf$-instance~$X$. Moreover, the solution to the $\Psf$-instance has to be computable from the solution~$\widehat{Y}$ to the $\Qsf$-instance~$\widehat{X}$, without the help of~$X$.

\begin{theorem}[{Liu~\cite[Theorem 2.1]{liu2023coding}}]\label[theorem]{thm:rt13-soc-rt12}
    \(\RT^1_3\not\leqslant_{\text{soc}}(\RT^1_2)^{*}\)
\end{theorem}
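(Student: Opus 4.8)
The plan is to derive $\RT^1_3 \not\leq_{\mathrm{soc}} (\RT^1_2)^*$ directly from \Cref{param-1}, using the existence of a $\Gamma$-hyperimmune $\Delta^0_2$ coloring and a cross-constraint ideal witnessing preservation of $\Gamma$-hyperimmunity. Concretely, let $f : \NN \to 3$ be a $\Gamma$-hyperimmune coloring, which exists by \Cref{exists-gamma-hyp-coloring} (for the $\mathrm{soc}$ statement, $\Delta^0_2$-ness is not even needed; any $\Gamma$-hyperimmune $f$, hence any arithmetically generic $f$, works). By \Cref{exists-ideal-coh} — or more simply by iterating \Cref{thm:cc-preservation-gamma} as in the corollaries to the basis theorems — there is a countable cross-constraint ideal $\M$ such that $f$ is $\Gamma$-hyperimmune relative to every element of $\M$. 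By \Cref{gamma-impl-hyp}, $f$ is then hyperimmune relative to every element of $\M$, so the hypotheses of \Cref{param-1} are met.

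Now I would set up the reduction. An instance of $(\RT^1_2)^*$ is a finite tuple $(g_0,\dots,g_{r-1})$ of $2$-colorings, i.e.\ an element $g \in \X(1) = (2^\NN)^r$ for the appropriate value of $r$ (recall $r$ is fixed throughout the document, but the argument works for each $r$, so fix an arbitrary $r$ and apply the version of \Cref{param-1} for that $r$). Take $f \in \X(0) = 3^\NN$ as our $\RT^1_3$-instance. Given any $(\RT^1_2)^*$-instance $g$ — with no effectiveness restriction, since $\mathrm{soc}$ allows $\widehat X$ to be arbitrary, and indeed \Cref{param-1} makes no computability assumption on $g$ — \Cref{param-1} produces a solution $\vec G$ to $g$, i.e.\ an $r$-tuple of infinite sets $G_s$ with each $G_s$ homogeneous for $g_s$, such that for every $Z \in \M$, $Z \oplus \vec G$ does not compute any infinite $f$-homogeneous set. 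Taking $Z = \emptyset \in \M$ gives that $\vec G$ itself computes no infinite $f$-homogeneous set, which is exactly the failure of strong omniscient computable reducibility: the $(\RT^1_2)^*$-solution does not compute an $\RT^1_3$-solution to $f$.

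There is essentially no obstacle here beyond correctly invoking the machinery: the whole content is packaged in \Cref{param-1}, \Cref{exists-gamma-hyp-coloring}, \Cref{gamma-impl-hyp}, and the construction of a cross-constraint ideal preserving (enough) hyperimmunity of $f$. The only point requiring a word of care is the quantifier over $r$: since the instances of $(\RT^1_2)^*$ range over tuples of all finite lengths, and $r$ is a fixed parameter of the ambient framework, one should note that for each fixed $r$ the above argument defeats all length-$r$ products simultaneously with the \emph{same} $f$ (this is exactly the strength of \Cref{param-1}, which handles all $g \in \X(1)$ at once), and that a single $f$ can be chosen to work uniformly: one simply takes $f$ $\Gamma$-hyperimmune, and for each $r$ builds the corresponding ideal $\M_r$ preserving $\Gamma$-hyperimmunity of $f$ relative to its members — or, more cleanly, observes that a single ideal preserving $\Gamma$-hyperimmunity of $f$ suffices for all $r$ at once, since $\Gamma$-hyperimmunity relative to $Z$ implies hyperimmunity relative to $Z$ regardless of $r$. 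Thus the same $\RT^1_3$-instance $f$ works against $(\RT^1_2)^*$-instances of every arity, completing the proof.
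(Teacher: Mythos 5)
Your proof is correct, but it takes a heavier route than the paper. The paper's proof is a one-liner: take $\M$ to be the cross-constraint ideal of arithmetic sets (from \Cref{exists-arith-ideal}, which follows from the $\Delta^0_2$ basis theorem \Cref{delta-basis}), take any $f \in 3^\NN$ hyperimmune relative to every arithmetic set (e.g.\ an arithmetically generic coloring), and apply \Cref{param-1} directly. You instead reach for the full $\Gamma$-hyperimmunity apparatus — \Cref{exists-gamma-hyp-coloring}, \Cref{thm:cc-preservation-gamma} or \Cref{exists-ideal-coh}, and \Cref{gamma-impl-hyp} — to produce a $\Delta^0_2$ coloring $f$ that is $\Gamma$-hyperimmune relative to the ideal, then downgrade to ordinary hyperimmunity. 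You yourself notice this is overkill for the $\mathrm{soc}$ statement: $\Delta^0_2$-ness of $f$ is irrelevant here (it only matters for the computable-reducibility result \Cref{separation2}), and once you concede that any arithmetically generic $f$ works, you've effectively reduced to the paper's argument. One small imprecision: you say a single ideal preserving $\Gamma$-hyperimmunity of $f$ "suffices for all $r$ at once," but the definition of cross-constraint ideal is $r$-dependent, so you would need to close under the $\CC$ problem for every arity simultaneously (e.g.\ by folding $r$ into the pairing in the iteration), or just use a separate $\M_r$ for each $r$ as you also suggest; either way this is routine, and the paper glosses over the same point. Your treatment of the $r$-quantifier is actually more explicit than the paper's, which is a plus; the tradeoff is that you invoke much more machinery than is needed.
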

\begin{proof}
    Let \(\M\) be a countable cross-constraint ideal (such an ideal exists thanks to \Cref{exists-arith-ideal}), and let \(f\in\X(0)\) be hyperimmune relative to any set in \(\M\). For any \(g\in\X(1)\), by \Cref{param-1}, there are sets \(\vec{G}\) witnessing the inequality \(\RT^1_3\not\leqslant_{\text{soc}}(\RT^1_2)^{*}\).
\end{proof}

Liu~\cite[Theorem 4.1]{liu2023coding} proved that $\SRT^2_3 \not \leqslant_c (\SRT^2_2)^{*}$. We strengthen his result by proving that it holds even for non-stable instances of $\RT^2_2$.

\begin{theorem}\label{separation2}
    \(\SRT^2_3 \not\leqslant_c (\RT^2_2)^{*}\)
\end{theorem}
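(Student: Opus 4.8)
The plan is to produce a single \emph{computable} stable instance of $\SRT^2_3$ that defeats every computable instance of $(\RT^2_2)^{*}$, by building it from a $\Delta^0_2$ coloring whose limit is $\Gamma$-hyperimmune and then invoking \Cref{param-2}. First I would apply \Cref{exists-gamma-hyp-coloring} to fix a $\Delta^0_2$ coloring $\widehat c : \NN \to 3$ that is $\Gamma$-hyperimmune, together with a computable approximation $\widehat c(x) = \lim_s \widehat c_s(x)$, and define a computable coloring $c : [\NN]^2 \to 3$ by $c(x,y) \coloneqq \widehat c_y(x)$ for $x < y$. Then $c$ is stable with $\lim_y c(x,y) = \widehat c(x)$ for every $x$, so $c$ is a computable instance of $\SRT^2_3$. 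The reason $\Gamma$-hyperimmunity rather than plain hyperimmunity is needed here is that it is precisely the invariance property that survives the ideal construction used below: plain hyperimmunity of a $\Delta^0_2$ coloring need not be preserved relative to the members of a cross-constraint ideal closed under $\COH$, whereas $\Gamma$-hyperimmunity is, by \Cref{thm:cc-preservation-gamma} and \Cref{thm:coh-preserves-gamma}.

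Next I would record the structural observation linking solutions of $c$ back to $\widehat c$: if $H$ is an infinite $c$-homogeneous set of color $j < 3$, then for every $x \in H$ we have $\widehat c(x) = \lim_y c(x,y) = j$, taking the limit along the infinitely many $y \in H$ with $y > x$; hence $H \subseteq \widehat c^{-1}(j)$, so $H$ is an infinite $\widehat c$-homogeneous set. Consequently any oracle computing an $\SRT^2_3$-solution to $c$ also computes an infinite $\widehat c$-homogeneous set, and since $c$ is computable the same holds for $c \oplus H$ whenever $H$ does.

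Finally, given an arbitrary computable instance $(g_0, \dots, g_{r-1})$ of $(\RT^2_2)^{*}$ with each $g_i : [\NN]^2 \to 2$ (recall $c$ is computable, so ``$c$-computable'' is no restriction), I would apply \Cref{exists-ideal-coh} to $\widehat c$ to obtain a cross-constraint ideal $\M \models \COH$ in which $\widehat c$ is $\Gamma$-hyperimmune relative to every element; by \Cref{gamma-impl-hyp} it is then hyperimmune relative to every element of $\M$, and since $\M$ is a Turing ideal it contains all computable sets, in particular $g_0, \dots, g_{r-1}$. Now \Cref{param-2}, applied with this $\M$ and $f \coloneqq \widehat c$, yields infinite $g_i$-homogeneous sets $G_i$ ($i < r$) such that $\bigoplus_{i<r} G_i$ computes no infinite $\widehat c$-homogeneous set; by the previous paragraph, $c \oplus \bigoplus_{i<r} G_i$ then computes no infinite $c$-homogeneous set. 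Thus $(G_0, \dots, G_{r-1})$ is a $(\RT^2_2)^{*}$-solution to $(g_0, \dots, g_{r-1})$ whose join with $c$ computes no $\SRT^2_3$-solution to $c$, and as the product instance was arbitrary this establishes $\SRT^2_3 \not\leqslant_c (\RT^2_2)^{*}$. Since all the combinatorial content has been packaged into \Cref{param-2} (and, beneath it, \Cref{param-1}, the cross-constraint basis theorems, and the preservation of $\Gamma$-hyperimmunity by $\CC$ and $\COH$), the remaining work is routine assembly; the only points needing care are the stability bookkeeping of the second paragraph and the downgrade from $\Gamma$-hyperimmunity to plain hyperimmunity via \Cref{gamma-impl-hyp}.
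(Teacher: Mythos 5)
Your proposal is correct and follows essentially the same route as the paper's proof: fix a $\Delta^0_2$ $\Gamma$-hyperimmune coloring via \Cref{exists-gamma-hyp-coloring}, convert it to a computable stable $\SRT^2_3$-instance via Shoenfield's limit lemma, obtain a cross-constraint ideal closed under $\COH$ preserving $\Gamma$-hyperimmunity via \Cref{exists-ideal-coh}, downgrade to hyperimmunity via \Cref{gamma-impl-hyp}, and conclude via \Cref{param-2}. The only difference is cosmetic: you spell out the approximation $c(x,y)=\widehat c_y(x)$ and the stability bookkeeping explicitly, whereas the paper invokes Shoenfield's limit lemma more compactly.
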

\begin{proof}
    By \cref{exists-gamma-hyp-coloring} there exists a \(\Delta^0_2\) coloring \(f:\NN\to 3\) which is $\Gamma$-hyperimmune.
    Using Shoenfield's limit lemma, there is a stable computable function~$h : [\NN]^2 \to 3$ such that for every~$x$, $\lim_y h(x, y) = f(x)$. Consider $h$ as a computable instance of~$\SRT^2_3$. Fix any $r$-tuple of computable colorings \(g_0, \ldots, g_{r-1}:[\NN]^2\to 2\) for some \(r\in\NN\). It suffices to show the existence of $g_i$-homogeneous sets~$H_i$ for every~$i < r$ such that $\bigoplus_{j<r} H_j$ does not compute any infinite $h$-homogeneous set.
    
    By \Cref{exists-ideal-coh}, there is a countable cross-constraint ideal \(\M\models\COH\) for which \(f\) is \(\Gamma\)-hyperimmune relative to any set in \(\M\). In particular, since $g_0, \dots, g_{r-1}$ are computable, they belong to~$\M$. Moreover, by \Cref{gamma-impl-hyp}, $f$ is hyperimmune relative to every element of~$\M$. By \Cref{param-2}, there exists  $g_i$-homogeneous sets~$H_i$ for every~$i < r$, such that $\bigoplus_{j<r} H_j$ does not compute any infinite $f$-homogeneous set. Since any $h$-homogeneous set is $f$-homogeneous, then $\bigoplus_{j<r} H_j$ does not compute any infinite $h$-homogeneous set.
\end{proof}

\begin{definition}
    A set \(H\subseteq\NN\) is \emph{pre-homogeneous} for a coloring \(f:[\NN]^{n+1}\to k\) if \(\forall \sigma\in [H]^n, \forall z\gt y\gt \max\sigma, f(\sigma, y)=f(\sigma, z)\)
\end{definition}

Jockusch~\cite[Lemma 5.4]{jockusch1972ramsey} proved that for every computable coloring \(f:[\NN]^{n+1}\to k\), every PA degree relative to~$\emptyset'$ computes an infinite pre-homogeneous set for~$f$. Moreover, for~$n \geq 2$, Hirschfeldt and Jocksuch~\cite[Theorem 2.1]{hirschfeldt2016notions} proved a reversal.

\begin{theorem}
For every~$n \geq 2$,
    \(\SRT^n_3 \not\leqslant_c (\RT^n_2)^{*}\)
\end{theorem}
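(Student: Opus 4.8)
The plan is to mimic the proof of \Cref{separation2} one dimension at a time, or more directly, to relativise it to~$\emptyset^{(n-2)}$ and fold the~$n-1$ needed applications of cohesiveness into a single one. First, applying \Cref{exists-gamma-hyp-coloring} relative to~$\emptyset^{(n-2)}$, fix a coloring $f : \NN \to 3$ which is $\Delta^0_2(\emptyset^{(n-2)})$, hence $\Delta^0_n$, and $\Gamma$-hyperimmune relative to~$\emptyset^{(n-2)}$ (for $n=2$ this is exactly the coloring used in \Cref{separation2}). By an $(n-1)$-fold application of Shoenfield's limit lemma, fix a computable coloring $h : [\NN]^n \to 3$ which is a genuine instance of~$\SRT^n_3$ and which is \qt{stable to the bottom}, in the sense that along any infinite set all the iterated limits $\lim_{x_{n-1}} \lim_{x_{n-2}} \cdots \lim_{x_{1}}$ exist, with the $(n-1)$-fold iterated limit equal to~$f$. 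An immediate induction on the dimension (taking limits along the homogeneous set, exactly as in \Cref{separation2}) then shows that every infinite $h$-homogeneous set is $f$-homogeneous, so it suffices to defeat~$f$.

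Now fix~$r \in \NN$ and computable colorings $g_0, \dots, g_{r-1} : [\NN]^n \to 2$; it suffices to produce infinite $g_i$-homogeneous sets~$H_i$ such that $\bigoplus_{i<r} H_i$ does not compute any infinite $f$-homogeneous set, hence (by the previous paragraph) no infinite $h$-homogeneous set. Relativising the construction of \Cref{exists-ideal-coh} and seeding it with~$\emptyset^{(n-2)}$ — legitimate since $f$ is $\Gamma$-hyperimmune relative to~$\emptyset^{(n-2)}$, and since each step of that construction preserves $\Gamma$-hyperimmunity by \Cref{thm:cc-preservation-gamma} and \Cref{thm:coh-preserves-gamma} — fix a countable cross-constraint ideal $\M \models \COH$ with~$\emptyset^{(n-2)} \in \M$ and such that $f$ is $\Gamma$-hyperimmune, hence hyperimmune by \Cref{gamma-impl-hyp}, relative to every element of~$\M$.

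Next, by the standard collapse of iterated cohesiveness (cf. the discussion in \Cref{sect:core-ideas}), there is an $\emptyset^{(n-2)}$-computable sequence of sets~$\vec S$, built uniformly from $g_0, \dots, g_{r-1}$ and~$\emptyset^{(n-2)}$ and hence a member of~$\M$, such that any $\vec S$-cohesive set~$C$ makes every $g_i \uh [C]^n$ stable to the bottom, with all the associated iterated-limit data being $(C \oplus \emptyset^{(n-2)})$-computable. Since $\M \models \COH$, pick such a~$C \in \M$; then $C \oplus \emptyset^{(n-2)} \in \M$ as well, and $f$ remains (hyper)immune relative to it. Let $g^*_i : \NN \to 2$ be the dimension-$1$ iterated-limit coloring induced by~$g_i$ on~$C$. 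Apply \Cref{param-1} (generalising \Cref{param-2}) to~$f$ and the tuple $(g^*_i)_{i<r}$: there is a solution~$\vec G$ of $(g^*_i)_{i<r}$ such that, for every~$Z \in \M$, $\vec G$ does not $Z$-compute any infinite $f$-homogeneous set; take $Z = C \oplus \emptyset^{(n-2)}$. Finally, from each $g^*_i$-homogeneous set~$G_i$, together with~$C$ and the $(C \oplus \emptyset^{(n-2)})$-computable iterated-limit data, one extracts an infinite $g_i$-homogeneous set~$H_i$ by the usual thinning turning \qt{stable to the bottom plus limit-homogeneous} into honestly homogeneous; thus $\bigoplus_{i<r} H_i \leq_T \vec G \oplus C \oplus \emptyset^{(n-2)}$, which computes no infinite $f$-homogeneous set, and we are done.

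The main obstacle is the bookkeeping in the third paragraph: collapsing the $n-1$ layers of cohesiveness into a single application of~$\COH$ to a well-chosen $\emptyset^{(n-2)}$-computable sequence (so that no iterated jump of~$C$ enters the picture), while keeping the resulting cohesive set inside an ideal over which $f$ stays $\Gamma$-hyperimmune — this is precisely where preservation of $\Gamma$-hyperimmunity by~$\COH$ (\Cref{thm:coh-preserves-gamma}) and the fact that $\Gamma$-hyperimmunity of~$f$ can be arranged relative to~$\emptyset^{(n-2)}$ rather than only~$\emptyset$ are used; the final un-limiting step at dimension~$n$ is routine but must be carried out with the oracle $C \oplus \emptyset^{(n-2)}$, which lies in~$\M$.
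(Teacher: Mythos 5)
Your approach diverges from the paper's and the divergence hides a genuine gap. The paper proves the result by induction on $n$: at each step it fixes a set $Q \gg P'$ with $Q' \leq_T P''$ (a low-over-$P'$ PA degree over $P'$, obtained from the low basis theorem), uses $Q$ to compute \emph{pre-homogeneous} sets for the $P$-computable colorings $h_s : [\NN]^{n+1} \to 2$ (Jockusch's Lemma 5.4), which induces $Q$-computable colorings $g_s : [\NN]^n \to 2$, and then applies the induction hypothesis relativized to~$Q$. The crucial accounting is that $\Delta^0_n(Q) \subseteq \Delta^0_{n+1}(P)$ because $Q' \leq_T P''$, and that pre-homogeneity, unlike cohesiveness, \emph{already incorporates the verification of all nested limits}, so no further jump of the solution set is needed in the un-limiting step.

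Your third paragraph asserts that there is an $\emptyset^{(n-2)}$-computable sequence $\vec S$ such that any $\vec S$-cohesive $C$ makes every $g_i \uh [C]^n$ ``stable to the bottom'' with the iterated-limit data being $(C \oplus \emptyset^{(n-2)})$-computable. This is not justified and is in fact false as stated for $n \geq 3$. The one-level limit $\hat g_i(\vec x) = \lim_{y \in C} g_i(\vec x, y)$ is only $\Delta^0_2(C)$, i.e., $C'$-computable, not $(C \oplus \emptyset')$-computable in general. The sets $\widehat R_{\vec x, j} = \{z : \hat g_i(\vec x, z) = j\}$ used to stabilize the second level therefore \emph{depend on the jump of~$C$}, and cannot be listed in advance by an $\emptyset^{(n-2)}$-computable $\vec S$ that does not know~$C$. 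Concretely, the greedy un-limiting from a $g^*_i$-homogeneous set back to a $g_i$-homogeneous set must, at each intermediate level $\ell$, verify conditions of the form $\lim_{y \in C} \hat g^{(\ell)}_i(\vec x, y) = j$, which is a $\Pi^0_2(C)$ (not $C$-decidable) statement; without that verification the greedy search can stall. At dimension~$2$ this problem disappears because the only limit layer is the one just above the computable coloring $g_i$, so the thinning can be carried out with $C \oplus g_i \leq_T C$, as done in \Cref{param-2}; but already at $n = 3$ the thinning needs $C'$, and in general $C^{(n-2)}$, which there is no reason to find inside the ideal~$\M$, nor to control so that $f$ remains $\Gamma$-hyperimmune relative to it. This is precisely the difficulty the paper's use of pre-homogeneous sets over a low PA degree is designed to avoid.
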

\begin{proof}
We prove by induction on~$n \geq 2$ that for every set~$P$, there exists a $\Delta^0_n(P)$ coloring $f : \NN \to 3$
such that for every $r \geq 1$ every $r$-tuple of $P$-computable colorings $g_0, \dots, g_{r-1} : [\NN]^n \to 2$,
there are infinite $\vec{g}$-homogeneous sets \(G_0, \dots, G_{r-1}\) such that \(\vec G \oplus P\) does not compute any infinite $f$-homogeneous set.

The case \(n=2\) corresponds to a relativized form of \Cref{separation2}. Now suppose the hypothesis holds for some \(n\in\NN\). Fix some set~$P$, and let \(Q\gg P'\) be such that \(Q'\leqslant_T P''\) (which exists by relativization of the low basis theorem (Jockusch and Soare~\cite{jockusch197classes})).

    By induction hypothesis relativized to \(Q\), there exists a \(\Delta^0_n(Q)\) (\ie \(\Delta^0_{n+1}(P)\))  coloring \(f:\NN\to 3\) such that for every $r \geq 1$ every $r$-tuple of $Q$-computable colorings $g_0, \dots, g_{r-1} : [\NN]^n \to 2$, there are infinite $\vec{g}$-homogeneous sets \(G_0, \dots, G_{r-1}\) such that \(\vec G \oplus Q\) does not compute any infinite $f$-homogeneous set.

    Now, consider an $r$-tuple of $P$-computable colorings $h_0, \dots, h_{r-1} : [\NN]^{n+1} \to 2$.
    By Jockusch~\cite[Lemma 5.4]{jockusch1972ramsey}, $Q$ computes infinite sets \(C_0, \dots, C_{r-1}\subseteq\NN\) pre-homogeneous for \(\widetilde h_0, \dots, \widetilde h_{r-1}\). For~$s < r$, let $g_s : [\NN]^n \to 2$
    be the $Q$-computable coloring defined by $g_s(i_0, \dots, i_{n-1}) = h_s(x^s_{i_0}, \dots, x^s_{i_{n-1}}, y)$, where $C_s = \{x^s_0 < x^s_1 < \dots \}$ and $y \in C_s$ is any element bigger than $x^s_{i_{n-1}}$.

    By choice of~$f$, there are infinite $\vec g$-homogeneous sets \(G_0, \dots, G_{r-1}\) such that \(\vec G \oplus Q\) does not compute any infinite $f$-homogeneous set. In particular, letting $H_s = \{x^s_i : i \in G_s\}$, $H_s$ is $h_s$-homogeneous, and since $\vec H \oplus P \leq_T \vec G \oplus Q$, then 
    $\vec H \oplus P$ does not compute any infinite $f$-homogeneous set.
    This completes our induction.

    Finally, by Shoenfield's limit lemma, for every~$n \geq 2$, there exists a stable computable coloring $\widehat f : [\NN]^n \to 3$ such that any infinite $\widehat f$-homogeneous set is $f$-homogeneous, where $f : \NN \to 3$ is the function witnessed by the inductive proof.


\end{proof}

\bibliographystyle{plain}
\bibliography{biblio.bib}

\begin{thebibliography}{10}

\bibitem{angles2024milliken}
Paul-Elliot Angl\`es~d'Auriac, Peter Cholak, Damir Dzhafarov, Beno\^{i}t Monin,
  and Ludovic Patey.
\newblock Milliken's {T}ree {T}heorem and {I}ts {A}pplications: {A}
  {C}omputability-{T}heoretic {P}erspective.
\newblock {\em Mem. Amer. Math. Soc.}, 293(1457), 2024.

\bibitem{Blanger2022ConservationTF}
David~R. B{\'e}langer.
\newblock Conservation theorems for the cohesiveness principle.
\newblock 2022.

\bibitem{brattka2017uniform}
Vasco Brattka and Tahina Rakotoniaina.
\newblock On the uniform computational content of {R}amsey's theorem.
\newblock {\em J. Symb. Log.}, 82(4):1278--1316, 2017.

\bibitem{cholak2020someresults}
Peter~A. Cholak, Damir~D. Dzhafarov, Denis~R. Hirschfeldt, and Ludovic Patey.
\newblock Some results concerning the {$\mathsf{SRT}^2_2$} vs. {$\mathsf{COH}$}
  problem.
\newblock {\em Computability}, 9(3-4):193--217, 2020.

\bibitem{cholak_jockusch_slaman_2001}
Peter~A. Cholak, Carl~G. Jockusch, and Theodore~A. Slaman.
\newblock On the strength of ramsey's theorem for pairs.
\newblock {\em The Journal of Symbolic Logic}, 66(1):1–55, 2001.

\bibitem{chubb2009reverse}
Jennifer Chubb, Jeffry~L. Hirst, and Timothy~H. McNicholl.
\newblock Reverse mathematics, computability, and partitions of trees.
\newblock {\em J. Symbolic Logic}, 74(1):201--215, 2009.

\bibitem{day2014independence}
Adam~R. Day and Jan Reimann.
\newblock Independence, relative randomness, and {PA} degrees.
\newblock {\em Notre Dame J. Form. Log.}, 55(1):1--10, 2014.

\bibitem{dorais2016uniform}
Fran\c{c}ois~G. Dorais, Damir~D. Dzhafarov, Jeffry~L. Hirst, Joseph~R. Mileti,
  and Paul Shafer.
\newblock On uniform relationships between combinatorial problems.
\newblock {\em Trans. Amer. Math. Soc.}, 368(2):1321--1359, 2016.

\bibitem{downey2022relationships}
Rod Downey, Noam Greenberg, Matthew Harrison-Trainor, Ludovic Patey, and Dan
  Turetsky.
\newblock Relationships between computability-theoretic properties of problems.
\newblock {\em J. Symb. Log.}, 87(1):47--71, 2022.

\bibitem{dhzafarov2020ramsey}
Damir~D. Dzhafarov, Jun~Le Goh, Denis~R. Hirschfeldt, Ludovic Patey, and Arno
  Pauly.
\newblock Ramsey's theorem and products in the {W}eihrauch degrees.
\newblock {\em Computability}, 9(2):85--110, 2020.

\bibitem{jockusch2009ramsey}
Damir~D. Dzhafarov and Carl~G. Jockusch, Jr.
\newblock Ramsey's theorem and cone avoidance.
\newblock {\em J. Symbolic Logic}, 74(2):557--578, 2009.

\bibitem{dzhafarov2022reverse}
Damir~D. Dzhafarov and Carl Mummert.
\newblock {\em Reverse mathematics---problems, reductions, and proofs}.
\newblock Theory and Applications of Computability. Springer, Cham, [2022]
  \copyright 2022.

\bibitem{hirschfeldt2017slicing}
Benedict Eastaugh.
\newblock {\it {S}licing the truth: on the computability theoretic and reverse
  mathematical analysis of combinatorial principles} [book review of
  {MR}3244278].
\newblock {\em Studia Logica}, 105(4):873--879, 2017.

\bibitem{ershov1968certain}
Ju.~L. Er\v{s}ov.
\newblock A certain hierarchy of sets. {II}.
\newblock {\em Algebra i Logika}, 7(4):15--47, 1968.

\bibitem{hirschfeldt2016notions}
Denis~R. Hirschfeldt and Carl~G. Jockusch, Jr.
\newblock On notions of computability-theoretic reduction between {$\Pi_2^1$}
  principles.
\newblock {\em J. Math. Log.}, 16(1):1650002, 59, 2016.

\bibitem{hirschfeldt2008strength}
Denis~R. Hirschfeldt, Carl~G. Jockusch, Jr., Bj\o~rn Kjos-Hanssen, Steffen
  Lempp, and Theodore~A. Slaman.
\newblock The strength of some combinatorial principles related to {R}amsey's
  theorem for pairs.
\newblock In {\em Computational prospects of infinity. {P}art {II}. {P}resented
  talks}, volume~15 of {\em Lect. Notes Ser. Inst. Math. Sci. Natl. Univ.
  Singap.}, pages 143--161. World Sci. Publ., Hackensack, NJ, 2008.

\bibitem{jockusch1972ramsey}
Carl~G. Jockusch, Jr.
\newblock Ramsey's theorem and recursion theory.
\newblock {\em J. Symbolic Logic}, 37:268--280, 1972.

\bibitem{jockusch197classes}
Carl~G. Jockusch, Jr. and Robert~I. Soare.
\newblock {$\Pi^0_1$} classes and degrees of theories.
\newblock {\em Trans. Amer. Math. Soc.}, 173:33--56, 1972.

\bibitem{kirby1982accessible}
Laurie Kirby and Jeff Paris.
\newblock Accessible independence results for {P}eano arithmetic.
\newblock {\em Bull. London Math. Soc.}, 14(4):285--293, 1982.

\bibitem{lerman2013separating}
Manuel Lerman, Reed Solomon, and Henry Towsner.
\newblock Separating principles below {R}amsey's theorem for pairs.
\newblock {\em J. Math. Log.}, 13(2):1350007, 44, 2013.

\bibitem{liu2012rt22}
Jiayi Liu.
\newblock Rt22 does not imply wkl0.
\newblock {\em The Journal of Symbolic Logic}, 77(2):609--620, 2012.

\bibitem{liu2023coding}
Lu~Liu.
\newblock The coding power of a product of partitions.
\newblock {\em Israel J. Math.}, 255(2):645--683, 2023.

\bibitem{patey2016strength}
Ludovic Patey.
\newblock The strength of the tree theorem for pairs in reverse mathematics.
\newblock {\em J. Symb. Log.}, 81(4):1481--1499, 2016.

\bibitem{patey2016weakness}
Ludovic Patey.
\newblock The weakness of being cohesive, thin or free in reverse mathematics.
\newblock {\em Israel J. Math.}, 216(2):905--955, 2016.

\bibitem{patey2017iterative}
Ludovic Patey.
\newblock Iterative forcing and hyperimmunity in reverse mathematics.
\newblock {\em Computability}, 6(3):209--221, 2017.

\bibitem{patey2018partial}
Ludovic Patey.
\newblock Partial orders and immunity in reverse mathematics.
\newblock {\em Computability}, 7(4):323--339, 2018.

\bibitem{seetapun1995strength}
David Seetapun and Theodore~A. Slaman.
\newblock On the strength of {R}amsey's theorem.
\newblock volume~36, pages 570--582. 1995.
\newblock Special Issue: Models of arithmetic.

\bibitem{simpson_2009}
Stephen~G. Simpson.
\newblock {\em Subsystems of Second Order Arithmetic}.
\newblock Perspectives in Logic. Cambridge University Press, 2 edition, 2009.

\bibitem{soare2016turing}
Robert~I. Soare.
\newblock {\em Turing computability}.
\newblock Theory and Applications of Computability. Springer-Verlag, Berlin,
  2016.
\newblock Theory and applications.

\bibitem{towsner2015maximum}
Henry Towsner.
\newblock On maximum conservative extensions.
\newblock {\em Computability}, 4(1):57--68, 2015.

\bibitem{wang2016definability}
Wei Wang.
\newblock The definability strength of combinatorial principles.
\newblock {\em J. Symb. Log.}, 81(4):1531--1554, 2016.

\end{thebibliography}

\end{document}